\documentclass[12pt]{article}

\usepackage{amsfonts}
\usepackage{amssymb}
\usepackage{amsthm}
\usepackage{amsmath}
\usepackage{amscd}
\usepackage{mathrsfs}
\usepackage{enumerate}
\usepackage{ulem}
\usepackage{fontenc}
\usepackage[all]{xy}
\usepackage{array}
\usepackage[english]{babel}

\oddsidemargin -1mm \evensidemargin -1mm
\topmargin -15mm
\headheight 5mm
\headsep 3.5mm
\textheight 240mm 
\textwidth 170mm 
\footskip 3em 

\def\CC {{\mathbb C}}     
\def\NN {{\mathbb N}}     
\def\PP {{\mathbb P}}     
\def\RR {{\mathbb R}}     
\def\XX {{\mathbb X}}     
\def\ZZ {{\mathbb Z}}     

\def\Lw  {\Longrightarrow}
\def\lw  {\longrightarrow}
\def\mc {\mathcal}
\def\mk {\mathfrak}

\def\ol  {\overline}

\def\tst {\Longleftrightarrow}
\def\ul  {\underline}

\def\wt  {\widetilde}
\def\wh  {\widehat}

\newcommand{\scal}[2]{\ensuremath{\langle #1 , #2 \rangle}}

\newtheorem{theorem}{Theorem}[section]
\newtheorem{lemma}[theorem]{Lemma}

\newtheorem{prop}[theorem]{Proposition}

\newtheorem{coro}[theorem]{Corollary}
\newtheorem{rem}{Remark}[section]
\newtheorem{defin}{Definition}[section]

\newtheorem{ex}{Example}[section]

\newtheorem*{thm}{Theorem}
\newtheorem*{corollary}{Corollary}

\begin{document}

\title{On momentum images of representations\\ and secant varieties}

\author{Elitza Hristova, Tomasz Maci\c{a}\.zek, Valdemar V. Tsanov}

\maketitle

\begin{abstract}
Let $K$ be a connected compact semisimple group and $V_\lambda$ be an irreducible unitary representation with highest weight $\lambda$. We study the momentum map $\mu:\PP(V_\lambda)\to\mk k^*$. The intersection $\mu(\PP)^+=\mu(\PP)\cap{\mk t}^+$ of the momentum image with a fixed Weyl chamber is a convex polytope called the momentum polytope of $V_\lambda$. We construct an affine rational polyhedral convex cone $\Upsilon_\lambda$ with vertex $\lambda$, such that $\mu(\PP)^+\subset\Upsilon_\lambda \cap {\mk t}^+$. We show that equality holds for a class of representations, including those with regular highest weight. For those cases, we obtain a complete combinatorial description of the momentum polytope, in terms of $\lambda$. We also present some results on the critical points of $||\mu||^2$. Namely, we consider the existence problem for critical points in the preimages of Kirwan's candidates for critical values. Also, we consider the secant varieties to the unique complex orbit $\XX\subset\PP$, and prove a 
relation between the momentum images of the secant varieties and the degrees of $K$-invariant polynomials on $V_\lambda$.
\end{abstract}

\small{
\tableofcontents
}

\newpage

\section{Introduction}

Let $K$ be a compact connected semisimple Lie group. Let $K\to SU(V)$ be an irreducible unitary representation of a connected compact semisimple group $K$, with a $K$-invariant Hermitian form $\langle.,.\rangle$ on $V$. Let $\mk k$ be the Lie algebra of $K$. Then
$$
\mu = \mu_K : \PP(V) \lw {\mk k}^* \;,\quad \mu[v](\xi)=\frac{1}{i}\frac{\langle\xi v,v\rangle}{\langle v,v\rangle} \;,\; [v]\in\PP,\xi\in{\mk k}
$$
is a $K$-equivariant momentum map on $\PP(V)$ with respect to the Fubini-Study form on $\PP(V)$. This map, and specifically its image $\mu(\PP(V))$, are the subject of these notes.

Momentum maps of Hamiltonian $K$-manifolds, or algebraic varieties, have a rich theory. The projective space of a unitary representation is a prominent example, which also has considerable bearing on the general theory. Remakably, to the best of our knowledge, the momentum images of irreducible representations are still not known, in the sense that there is no concrete closed description of the image $\mu(\PP(V))$ in terms of the invariants of the representation, e.g. the highest weight. This is one of the questions we discuss in this text. We show that for an irreducible representation $V$ with highest weight $\lambda$ the intersection of the momentum image $\mu(\PP(V))$ with a positive Weyl chamber is always contained in the intersection of two rational polyhedral cones. Moreover, for a large class of representations, including the case when $\lambda$ is regular, the image is in fact equal to this intersection. Our method is based on Wildberger's method, \cite{Wildberger}, which he developed in order to 
determine all irreducible representations with convex $\mu(\PP(V))$. In his paper \cite{Wildberger} Wildberger defines the notion of root-distinct weights, which plays a key role in our work. We believe that a further refinement of our method could lead to a complete description of the momentum image also in the cases when the image is strictly contained in the intersection mentioned above.

A systematic study of momentum images of representations was also initiated recently by Vergne and Walter, \cite{Vergne-Walter-MomCone}. They do not assume irreducibility of the representation. They obtain a general qualitative description of the momentum image, under the assumption that the generic isotropy group is finite. The specification of this description to a given case remains as a nontrivial task. In their proof, Vergne and Walter use semistability methods, related to previous work of Ressayre, among others. Our methods are independent, based on momentum map techniques and the combinatorics of roots and weights. Certainly, the two descriptions should yield the same result and we believe that the interaction of the two methods could bring new insights into the structure of momentum images.

To state more precisely our results, let us introduce some notations and recall relevant known facts. Let $T \subset K$ be a Cartan subgroup with corresponding Lie algebra ${\mk t}$ and Weyl group $W$. Let $B\subset K$ be a Borel subgroup with $T\subset B$. We use the identification ${\mk t} \cong {\mk t}^*$ and denote by ${\mk t}^+$ the positive Weyl chamber corresponding to the choice of $B$. Let $\Pi$ be the set of simple roots. Let $\lambda$ be a dominant weight and $\PP=\PP(V_\lambda)$. For $\alpha\in\Pi$ we denote by $\omega_\alpha$ the corresponding fundamental weight and write
$$
\lambda = \sum\limits_{\alpha\in\Pi} \lambda_\alpha\omega_\alpha \;.
$$
We partition the set of simple roots in the following way:
\begin{gather*}
\begin{array}{lll}
\Pi &= \Pi_{\rm rd} \sqcup \Pi_1 \sqcup \Pi_0 \;, \\
\hline
\Pi_{\rm rd} &= \{\alpha\in\Pi: \lambda_\alpha\geq 2\} \\
\Pi_{1} &= \{\alpha\in\Pi: \lambda_\alpha=1\} \\
\Pi_{0} &= \{\alpha\in\Pi: \lambda_\alpha=0\} \\
\end{array}
\end{gather*}
and we further denote $\Pi^{\lambda} = \Pi_{\rm rd} \sqcup \Pi_1$. Let $\chi_0\in\mk t^+$ be the element determined by $(\chi_0|\alpha)=||\chi_0||^2$ for $\alpha\in\Pi^\lambda$ and $(\chi_0|\beta)=0$ for $\beta\in\Pi_0$.

It is well known that the intersection $\mu(\PP)^+=\mu(\PP)\cap\mk t^+$ is a convex polytope, called the momentum polytope of $\PP$. This polytope intersects every $K$ orbit in $\mu(\PP)$ at exactly one point, and thus determines the momentum image. The highest weight is the image of the highest weight vector, $\lambda=\mu[v^\lambda]$. The function $||\mu||^2$ has a unique maximum $||\lambda||^2$, attained exactly at the orbit $K[v^\lambda]$, which is the unique complex orbit of $K$ in $\PP$. The intersection $\mu(\PP)^{\mk t}=\mu(\PP)\cap\mk t^*$ is preserved by $W$. We have $\mu(\PP)^{\mk t}\subset\mc C(W\lambda)$ and $\mu(\PP)\subset\mc C(K\lambda)$, where $\mc C(-)$ denotes convex hull in $\mk k^*$. In particular, the momentum image is convex if and only if $\mu(\PP)=\mc C(K\lambda)$. We consider affine convex cones with vertex $\lambda$, and aim to obtain the momentum polytope as an intersection of such a cone with the Weyl chamber. We start with the cone ${\rm Cone}_\lambda(W\lambda)$ generate by the 
rays $\lambda+\RR_+(w\lambda-\lambda)$. Clearly, $\mu(\PP)^{\mk t}\subset{\rm Cone}_\lambda(W\lambda)$. Wildberger has shown that $\mu(\PP)$ is convex if and only if $\Pi_1=\emptyset$. In particular, for $\alpha\in\Pi_1$, the edge $E^\lambda_{s_\alpha\lambda}$ is not contained in $\mu(\PP)$. We define a subcone of ${\rm Cone}_\lambda(W\lambda)$ by introducing an inequality for each such edge. Let $W_\lambda$ be the stabilizer of $\lambda$ in $W$. We define
$$
\Upsilon_\lambda = \{\nu\in{\rm Cone}_\lambda(W\lambda): (\nu|ws_\alpha \chi_0)\leq(\lambda|ws_\alpha\chi_0) , \forall \alpha\in\Pi_1,\forall w\in W_\lambda\} \;.
$$
In Sections \ref{The case of a regular highest weight} and \ref{A cone containing the image} we prove the following results:

\begin{thm}
For every dominant weight $\lambda$, we have
$$
\mu(\PP(V_\lambda)) \cap \mk t^+\subset\Upsilon_\lambda \cap \mk t^+ \; .
$$
Furthermore, when $\lambda$ is regular we have an equality
$$
\mu(\PP(V_\lambda)) \cap \mk t^+ = \Upsilon_\lambda \cap \mk t^+ \; .
$$
\end{thm}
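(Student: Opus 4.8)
\emph{Strategy.} I would establish the two assertions by separate arguments: an ``exclusion'' argument for the inclusion (valid for every $\lambda$), based on Wildberger's weight decomposition, and an explicit realization of the vertices of $\Upsilon_\lambda\cap\mk t^+$ by root-distinct families of weights for the equality in the regular case.

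\emph{The inclusion $\mu(\PP)\cap\mk t^+\subset\Upsilon_\lambda\cap\mk t^+$.} It is already known that $\mu(\PP)^{\mk t}$ is $W$-stable, contained in ${\rm Cone}_\lambda(W\lambda)$, and meets $\mk t^+$ along the fundamental domain $\mu(\PP)^+$; so the only thing to check is that each $\nu\in\mu(\PP)^+$ satisfies $(\nu\mid ws_\alpha\chi_0)\le(\lambda\mid s_\alpha\chi_0)$ for $\alpha\in\Pi_1$, $w\in W_\lambda$. Write $\nu=\mu[v]$ and decompose $v=\sum_\mu v_\mu$ into weight components. Since $\nu$ is dominant, $\mu[v]\in\mk t^*$, which by the formula for $\mu$ means simultaneously that $\nu=\sum_\mu c_\mu\mu$ with $c_\mu=||v_\mu||^2/||v||^2$ is a convex combination of $\mathrm{supp}(v)$, and that $\langle e_\beta v,v\rangle=\sum_\mu\langle e_\beta v_\mu,v_{\mu+\beta}\rangle=0$ for every root $\beta$ (this vanishing is the engine of Wildberger's method). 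Using that $w\in W_\lambda$ fixes both $\chi_0$ and $\lambda$, that $(\chi_0\mid\beta)=||\chi_0||^2$ for $\beta\in\Pi^\lambda$ and $0$ for $\beta\in\Pi_0$, that $||w\alpha||=||\alpha||$, and that $\lambda_\alpha=1$, a direct computation rewrites the desired inequality as the combinatorial statement
\begin{equation*}
\sum_{\mu\in\mathrm{supp}(v)}c_\mu\Bigl(\ell^\lambda(\mu)+\tfrac{2(\mu\mid w\alpha)}{(w\alpha\mid w\alpha)}\Bigr)\ \ge\ 1,\qquad \ell^\lambda(\mu):=\sum_{\beta\in\Pi^\lambda}n_\beta(\mu),\quad \lambda-\mu=\sum_{\beta\in\Pi} n_\beta(\mu)\,\beta .
\end{equation*}
The bracket equals $1$ at $\mu=\lambda$, so everything hinges on controlling the weights $\mu$ for which it is smaller; those necessarily sit below the $w\alpha$-wall (or on the $\Pi_0$-face through $\lambda$ in the non-regular case), i.e.\ on the far side of the excluded edge $E^\lambda_{s_\alpha\lambda}$. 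The key structural fact is that $\alpha\in\Pi_1$ forces the $w\alpha$-string through $\lambda$ to have length two, so the weight space at its top is one-dimensional; feeding this into the vanishing of $\langle e_{w\alpha}v,v\rangle$ and telescoping the resulting cancellations along the string produces the needed lower bound for the $c$-weighted average of the bracket. I expect this bookkeeping---organising the off-diagonal cancellations along $w\alpha$-strings into the linear inequality above, uniformly in $w\in W_\lambda$---to be the main obstacle in this half; it is the quantitative refinement of Wildberger's proof that $E^\lambda_{s_\alpha\lambda}\not\subset\mu(\PP)$.

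\emph{Equality for regular $\lambda$.} Here $\Pi_0=\emptyset$, $W_\lambda=\{e\}$, ${\rm Cone}_\lambda(W\lambda)=\lambda-\sum_{\beta\in\Pi}\RR_{\ge0}\beta$, and ${\rm Cone}_\lambda(W\lambda)\cap\mk t^+=\mc C(W\lambda)\cap\mk t^+$ (a dominant element $\preceq\lambda$ lies in $\mc C(W\lambda)$), so $\Upsilon_\lambda\cap\mk t^+$ is a polytope. By the easy half of Wildberger's method, for any root-distinct set $S$ of weights of $V_\lambda$ the vector $v=\sum_{\mu\in S}\sqrt{c_\mu}\,v_\mu$ has $\mu[v]=\sum_\mu c_\mu\mu\in\mk t^*$, whence $\mc C(S)\cap\mk t^+\subset\mu(\PP)^+$; since $\mu(\PP)^+$ is convex, it suffices to show that every vertex of $\Upsilon_\lambda\cap\mk t^+$ lies in the convex hull of such sets $\mc C(S)$, and one expects in fact each vertex to lie in a single $\mc C(S)$. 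The vertices are cut out by facets of three kinds: $\{n_{\beta_0}(\cdot)=0\}$, $\beta_0\in\Pi$ (from the cone); $\{(\nu\mid\gamma)=0\}$, $\gamma\in\Pi$ (from $\mk t^+$); and the new facets $\{(\nu\mid s_\alpha\chi_0)=(\lambda\mid s_\alpha\chi_0)\}$, $\alpha\in\Pi_1$. For a vertex avoiding the third kind one takes $S$ inside $W\lambda$: most pairs of weights in $W\lambda$ are root-distinct (the exceptions being the edge-of-$\mc C(W\lambda)$ pairs whose edge vector is a root, which are the $W$-translates of $\{\lambda,s_\alpha\lambda\}$ with $\alpha\in\Pi_1$), and one also has the singleton $\{\lambda\}$ and, for $\alpha\in\Pi_{\rm rd}$, the pair $\{\lambda,s_\alpha\lambda\}$ (root-distinct since $\lambda-s_\alpha\lambda=\lambda_\alpha\alpha$ is not a root). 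For a vertex on a new facet one has to ``go around'' the excluded edge $E^\lambda_{s_\alpha\lambda}$: one replaces $\lambda$ by a configuration of weights one or two steps below $\lambda$, such as $\{\lambda-\alpha,\lambda-\beta\}$ with $\beta-\alpha$ not a root, whose convex hull reaches the vertex while remaining root-distinct. Once every vertex is realized, convexity of $\mu(\PP)^+$ gives $\Upsilon_\lambda\cap\mk t^+\subset\mu(\PP)^+$, which together with the first part yields the equality and simultaneously the combinatorial description of the momentum polytope. The hard part will be this last step: producing, for every regular $\lambda$ and every vertex lying on the new facets, an explicit root-distinct witness among the weights of $V_\lambda$---that is, matching the explicit vertex/facet list of $\Upsilon_\lambda\cap\mk t^+$ with explicit root-distinct configurations of weights.
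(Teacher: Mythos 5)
Your reduction of the defining inequality $(\nu\mid ws_\alpha\chi_0)\le(\lambda\mid ws_\alpha\chi_0)$ to a weighted-average statement over the support of $v$ is correct, but the mechanism you propose for proving that statement does not close the argument, and this is a genuine gap. The length-two $w\alpha$-string through $\lambda$ and the one-dimensionality of $V^\lambda$, fed into $\langle e_{w\alpha}v,v\rangle=0$, only have traction when the support of $v$ is forced onto that string --- which happens precisely when $\mu[v]$ lies on the segment $[\lambda,s_\alpha\lambda]$, i.e.\ this reproves Wildberger's exclusion of the open edge and nothing more. For a general dominant $\nu$ in the excluded region the support is unconstrained: the single vanishing matrix coefficient $\sum_\mu\langle e_{w\alpha}v_\mu,v_{\mu+w\alpha}\rangle=0$ imposes no lower bound on the norms $c_\mu$ (terms can cancel in phase or vanish individually), and weights off the string whose ``bracket'' is $\le 0$ (for instance $\lambda-\alpha-\gamma$ with $\gamma\in\Pi_0$ orthogonal to $\alpha$, and deeper weights) are never reached by the telescoping. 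Moreover the inequality is false for non-dominant points of $\mu(\PP)\cap\mk t^*$ (points near $s_\alpha\lambda$ itself), so any pointwise argument must exploit dominance quantitatively, which your sketch does not do. The paper's proof of the inclusion is of a different nature and supplies exactly the missing global input: by Kirwan's convexity theorem $\mu(\PP)^+$ is a convex polytope containing $\lambda$; the excluded region (the ``yard'' of $s_\alpha\lambda$) is the part of $\mc C(W\lambda)$ beyond a hyperplane through $\lambda$, so if $\mu(\PP)^+$ entered it, the polytope would have a vertex there, giving an edge $E^\lambda_\nu$ with $\nu$ a weight in the yard; a lattice-point lemma shows the only weight in the yard is $s_\alpha\lambda=\lambda-\alpha$, and then $E^\lambda_{s_\alpha\lambda}\subset\mu(\PP)$ contradicts Wildberger. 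That convexity-plus-lattice-counting step is the idea your first half is missing.

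For the regular case your overall scheme (use convexity of $\mu(\PP)^+$ to reduce to the vertices of $\Upsilon_\lambda\cap\mk t^+$, and realize the vertices near the new facets by the root-distinct configurations $s_\beta(\lambda-\Pi)$, $\beta\in\Pi_1$) is sound and coincides with the paper's treatment of those facets. However, your plan to realize \emph{all} vertices by root-distinct sets leaves the vertices $0$ and $\xi_\delta$ (the meeting points with the fundamental rays) unjustified: Wildberger's lemma is sufficient, not necessary, for membership in $\mu(\PP)$, and there is no argument that $0$ or $\xi_\delta$ lies in the convex hull of a root-distinct subset of $\Lambda(V_\lambda)$ when $\Pi_1\ne\emptyset$. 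The paper obtains these points by a different mechanism: $0\in\mu(\PP)$ because regular highest weights do not occur among the unstable representations (Kac's classification, Proposition \ref{Prop IrrepWithoutInvar}), and $\xi_\delta\in\mu(\PP)$ by restricting to the Levi subgroup with simple roots $\Pi\setminus\{\delta\}$, whose restricted highest weight is again regular of rank at least $2$ --- which is also why the paper assumes ${\rm rank}(\mk k)\ge 3$ in that section, a hypothesis absent from your argument. Without this (or a substitute) the reverse inclusion $\Upsilon_\lambda\cap\mk t^+\subset\mu(\PP)^+$ is not established.
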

The regular case is not the only case when equality is achieved, our most general result is stated in Section \ref{The intersection of two cones}. Equality also holds for all known to us examples in case $K$ is simply laced. Let us note, however, that equality does not always hold, as shown in example \ref{Ex Upsilon for Sp4}.

The cone $\Upsilon_\lambda$ is $W_\lambda$-invariant by construction. Hence we have
$$
W_\lambda\mu(\PP)^+ \subset \Upsilon_\lambda \;.
$$
On the other hand $W_\lambda(\Upsilon_\lambda\cap{\mk t}^+) = \Upsilon_\lambda\cap(W_\lambda\mk t^+)$. The latter is convex, being the intersection of two convex cones. Thus we get the following.

\begin{corollary}
If $\mu(\PP)^+ = \Upsilon_\lambda\cap\mk t^+$ holds, then the set $\mu(\PP)\cap(W_\lambda\mk t^+)$ is convex.
\end{corollary}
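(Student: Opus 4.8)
\emph{Proof strategy.} The plan is to establish, independently of the hypothesis, the identity $\mu(\PP)\cap(W_\lambda\mk t^+)=W_\lambda\,\mu(\PP)^+$, and then to substitute the assumed equality $\mu(\PP)^+=\Upsilon_\lambda\cap\mk t^+$ and use the $W_\lambda$-invariance of $\Upsilon_\lambda$ to rewrite the right-hand side as $\Upsilon_\lambda\cap(W_\lambda\mk t^+)$, an intersection of two convex cones. I expect no serious obstacle here; the only points needing care are the first identity and the ``reassembly'' of the $W_\lambda$-orbit of $\Upsilon_\lambda\cap\mk t^+$ into a single intersection.

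\emph{Step 1.} First I would prove $\mu(\PP)\cap(W_\lambda\mk t^+)=W_\lambda\,\mu(\PP)^+$. Since $\mk t^+$ is a strict fundamental domain for $W$ acting on $\mk t^*$ and $\mu(\PP)^{\mk t}=\mu(\PP)\cap\mk t^*$ is $W$-invariant, we have $\mu(\PP)^{\mk t}=W\,\mu(\PP)^+$; moreover $W_\lambda\mk t^+\subset\mk t^*$, so intersecting $\mu(\PP)$ with $W_\lambda\mk t^+$ coincides with intersecting $\mu(\PP)^{\mk t}$ with it. The inclusion $W_\lambda\,\mu(\PP)^+\subseteq\mu(\PP)\cap(W_\lambda\mk t^+)$ is immediate from the $W$-invariance of $\mu(\PP)^{\mk t}$. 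Conversely, given $\nu\in\mu(\PP)\cap(W_\lambda\mk t^+)$, write $\nu=w\eta$ with $w\in W_\lambda$ and $\eta\in\mk t^+$; then $\eta=w^{-1}\nu\in\mu(\PP)^{\mk t}$ lies in $\mk t^+$, hence $\eta\in\mu(\PP)^+$ and $\nu\in W_\lambda\,\mu(\PP)^+$.

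\emph{Step 2.} Now assume $\mu(\PP)^+=\Upsilon_\lambda\cap\mk t^+$. By Step 1, $\mu(\PP)\cap(W_\lambda\mk t^+)=W_\lambda(\Upsilon_\lambda\cap\mk t^+)$. Using $w\Upsilon_\lambda=\Upsilon_\lambda$ for $w\in W_\lambda$, each translate satisfies $w(\Upsilon_\lambda\cap\mk t^+)=\Upsilon_\lambda\cap(w\mk t^+)$, and taking the union over $w\in W_\lambda$ distributes the intersection past the union: $W_\lambda(\Upsilon_\lambda\cap\mk t^+)=\Upsilon_\lambda\cap(W_\lambda\mk t^+)$. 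Finally, $\Upsilon_\lambda$ is a convex (affine polyhedral) cone by its very definition, and $W_\lambda\mk t^+$ is a convex polyhedral cone: it is the union of the Weyl chambers $w\mk t^+$, $w\in W_\lambda$, adjacent to $\mk t^+$, equivalently $\{x\in\mk t^*:(x|\beta)\ge 0\text{ for every positive root }\beta\notin\mathrm{span}(\Pi_0)\}$, a standard fact about the parabolic subgroup $W_\lambda=W_{\Pi_0}$. Hence $\mu(\PP)\cap(W_\lambda\mk t^+)$ is an intersection of two convex sets and is therefore convex.

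\emph{Main difficulty.} Beyond the bookkeeping above there is essentially none, but the step that genuinely exploits the special structure is the identity $W_\lambda(\Upsilon_\lambda\cap\mk t^+)=\Upsilon_\lambda\cap(W_\lambda\mk t^+)$: it is exactly the $W_\lambda$-invariance built into the defining inequalities of $\Upsilon_\lambda$ (the constraints indexed by $\alpha\in\Pi_1$ and $w\in W_\lambda$) that allows the union to be slid past the intersection. If one wants a fully self-contained argument, the single external input to check is the convexity of $W_\lambda\mk t^+$, i.e.\ that the union of the chambers of the parabolic $W_\lambda$ is cut out by the positive-root inequalities for roots not spanned by $\Pi_0$; this rests on the standard theory of parabolic subgroups of Coxeter groups together with the fact that the stabiliser $W_\lambda$ of the dominant weight $\lambda$ is generated by the simple reflections $s_\alpha$ with $\alpha\in\Pi_0$.
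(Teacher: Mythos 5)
Your proof is correct and takes essentially the same route as the paper: it rests on the $W_\lambda$-invariance of $\Upsilon_\lambda$ to get $W_\lambda(\Upsilon_\lambda\cap\mk t^+)=\Upsilon_\lambda\cap(W_\lambda\mk t^+)$, which is convex as an intersection of two convex cones. You merely spell out two points the paper leaves implicit, namely the identity $\mu(\PP)\cap(W_\lambda\mk t^+)=W_\lambda\,\mu(\PP)^+$ (from $W$-invariance of $\mu(\PP)\cap\mk t^*$) and the convexity of the union of chambers $W_\lambda\mk t^+$.
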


Besides the momentum image itself, we seek to understand further properties of the momentum map for the specific case in hand. We study the critical points of $||\mu||^2$, which is an essential step toward the construction of the Kirwan-Ness stratification of $\PP(V)$ with respect to the $K$-action, \cite{Kirwan}, \cite{Ness-StratNullcone}. Let us call $\xi\in\mk k^*$ and intermediate critical value, if $\mu^{-1}(\xi)$ contains critical points of $||\mu||^2$. In \cite{Kirwan} Kirwan gives a necessary condition for an element $\xi \in \mk t$ to be an intermediate critical value for $||\mu||^2$. It is, however, not true that this condition is always sufficient, in some cases the critical set turns out to be empty. In Section \ref{Critical points} we show that when $\xi$ is a regular element of $\mk t$, then Kirwan's condition is also sufficient. For the case of singular $\xi$ we describe a procedure on how to determine when $\xi$ is a critical value. Then, in Section \ref{Example qubits} we illustrate our 
procedure on a specific example, namely the representation of $GL_2^{\times N}$ on $(\CC^2)^{\otimes N}$.

The third topic of these notes are secant varieties to homogeneous projective varieties. We seek to relate the properties of the momentum image to the projective geometry of $K$-orbits in $\PP(V)$. It is well-known that $V$ is irreducible if and only if $K$ has exactly one complex orbit $\XX\subset\PP(V)$, the orbit of a highest weight vector, a flag variety. To any irreducible nondegenerate projective variety, $\XX$ in our case, is associated a sequence of nested irreducible subvarieties
$$
\XX = \Sigma_1 \subset\Sigma_2 \subset\dots\subset\Sigma_{r_m}=\PP(V)\;,
$$
called the secant varieties of $\XX$ in $\PP$, defined as follow. The projective linear span of any $r$ points on $\XX$ is called an $r$-secant to $\XX$. The secant variety $\Sigma_r$ is Zariski closure of the union of all $r$-secants. The variety $\XX\subset\PP$ is called rank-semi-continuous if the union of $r$-secants is Zariski closed for all $r$. The secant varieties are preserved by $K$. We are interested in the general question: how do the secant varieties intersect with the Kirwan strata? What we present here are some first steps concerning the semistable stratum and degrees of $K$-invariant polynomials on $V$. Let $J\subset\CC[V]^K$ denote the ring of invariant polynomials vanishing at $0$. Recall that 
\begin{center}
$J\ne 0$ if and only if $0\in\mu(\PP)$. 
\end{center}
Assume that $J\ne0$ and consider the minimal positive degree of invariants:
$$
d_1=\min\{d\in\NN:J_d=\CC[V]_d^K\ne 0\}\;. 
$$
We define the rank of semistability for the representation $(K,V)$ as
$$
r_{ss} = \min\{r\in\NN:0\in\mu(\Sigma_r)\} \;.
$$
In section \ref{Sect SecVar} we prove the following.

\begin{thm}
The inequality $r_{ss}\leq d_1$ holds. If furthermore $\XX\subset\PP$ is rank-semi-continuous and $K$ is the full connected isometry group of $\XX$, then $r_{ss}=d_1$.
\end{thm}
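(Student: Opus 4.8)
The plan is to work throughout with the Kempf--Ness/Kirwan dictionary: since $\Sigma_r$ is a closed subvariety of $\PP(V)$ preserved by $K$, hence by the complexification $G=K_\CC$, one has $0\in\mu(\Sigma_r)$ if and only if $\Sigma_r$ contains a point which is semistable for the $G$-action with the $\mc O(1)$ linearization, if and only if some homogeneous $f\in\CC[V]^K$ of positive degree is not identically zero on the affine cone $\wh\Sigma_r\subset V$ — the case $\Sigma_r=\PP(V)$ being exactly the fact ``$J\ne0\iff0\in\mu(\PP)$'' recalled above. I would also use freely that the highest weight vectors span $V$ (their span is a nonzero $G$-submodule of the irreducible $V$), that $\wh\XX$ is a cone, and that $\wh\Sigma_r=\overline{\{w_1+\dots+w_r:w_i\in\wh\XX\}}$, with the closure superfluous precisely when $\XX$ is rank-semi-continuous.

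For the inequality $r_{ss}\le d_1$: take a nonzero homogeneous $f\in\CC[V]^K$ of degree $d_1$ and let $\tilde f$ be its polarization, a nonzero symmetric $G$-invariant $d_1$-linear form on $V$. Since $\wh\XX$ contains a spanning set of $V$, the form $\tilde f$ cannot vanish identically on $\wh\XX\times\dots\times\wh\XX$, so there are $w_1,\dots,w_{d_1}\in\wh\XX$ with $\tilde f(w_1,\dots,w_{d_1})\ne0$. Then $p(t_1,\dots,t_{d_1})=f(t_1w_1+\dots+t_{d_1}w_{d_1})$ is a nonzero polynomial, its coefficient of $t_1\cdots t_{d_1}$ being $d_1!\,\tilde f(w_1,\dots,w_{d_1})$; picking $t^0$ with $p(t^0)\ne0$ and noting $t^0_iw_i\in\wh\XX$, the point $\sum_i t^0_iw_i$ lies in $\wh\Sigma_{d_1}$ and $f$ does not vanish on it. Hence $0\in\mu(\Sigma_{d_1})$, i.e. $r_{ss}\le d_1$.

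For the reverse inequality under the two extra hypotheses one must show $\wh\Sigma_r\subseteq\mc N$, the null cone of $(G,V)$, whenever $r<d_1$. Assume not and let $[v]\in\Sigma_r$ be semistable; by Kempf--Ness replace $[v]$ by the minimal-vector representative of the unique closed orbit in $\overline{G[v]}$, so that $\mu[v]=0$, still with $[v]\in\Sigma_r$ because $\Sigma_r$ is closed and $G$-invariant. Rank-semi-continuity now produces an honest decomposition $v=w_1+\dots+w_r$ with $w_i\in\wh\XX$, and the condition $\mu[v]=0$ reads $\sum_{i,j}\scal{\xi w_i}{w_j}=0$ for all $\xi\in\mk k$. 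I would read this as saying that the configuration $([w_1],\dots,[w_r])$ of points of $\XX$ is ``balanced'' (polystable), and then prove that the minimal size of such a balanced configuration on $\XX$ is exactly $d_1$; the inequality $\ge d_1$ is all that is needed. This is where the hypothesis that $K$ is the full connected isometry group of $\XX$ should enter: it should guarantee that $\CC[V]^K$ is the full invariant ring attached to $\XX$, equivalently that there is no larger reductive group acting on $V$ with the same highest weight orbit, hence no spurious invariant of degree below $d_1$; together with rank-semi-continuity (used just now to obtain the honest sum) this would force $r\ge d_1$, so $r_{ss}=d_1$.

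The main obstacle is precisely this last step. Running the polarization argument of the first part backwards only shows that semistability of $v=\sum_{i=1}^r w_i$ yields, for some $s\le r$ and some $\beta_1,\dots,\beta_s\ge1$, a nonzero $G$-invariant in $V_{\beta_1\lambda}^*\otimes\dots\otimes V_{\beta_s\lambda}^*$ (via $\CC[\wh\XX]_m=V_{m\lambda}^*$), and this by itself does not force $s\ge d_1$ — it can fail, for instance when $V_\lambda$ carries an invariant antisymmetric form, where $s=2$ already works while $d_1>2$. So one must genuinely exploit that $v$ is a minimal vector, not merely a point on which some invariant is nonzero, together with the maximality of $K$; I expect the right mechanism is to show that when $r<d_1$ the instability cones of the $r$ points $[w_i]\in\XX$ have a common element, i.e. there is a one-parameter subgroup of $G$ simultaneously destabilizing all the $w_i$, contradicting semistability of their sum. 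That rank-semi-continuity cannot be dropped is already visible for the twisted cubic, where $r_{ss}=2<4=d_1$.
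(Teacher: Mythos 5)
Your proof of the first inequality $r_{ss}\le d_1$ is correct and takes a more self-contained route than the paper. The paper deduces it from the Landsberg--Manivel theorem \cite{Lands-Mani-2004-IdealsSecVar} that $I_{d_1}(\Sigma_{d_1})=0$, so that a nonzero invariant of degree $d_1$ cannot vanish identically on $\Sigma_{d_1}$, and then concludes via the equivalence ``$0\in\mu(Y)$ iff the closed $G$-stable subvariety $Y$ meets the semistable locus''. You instead reprove exactly the instance needed, by polarizing the degree-$d_1$ invariant and using that $\wh\XX$ is a cone spanning $V$; this is in effect the elementary half ($I_r(\Sigma_r)=0$) of the cited result, so nothing is lost for the theorem as stated, and your argument has the merit of being independent of \cite{Lands-Mani-2004-IdealsSecVar}.

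The second half, however, has a genuine gap, which you yourself flag as ``the main obstacle''. The reduction via rank-semi-continuity to an honest decomposition $v=w_1+\cdots+w_r$ with $\mu[v]=0$ is fine, but the key claim --- that such a ``balanced'' configuration on $\XX$ must have at least $d_1$ points, equivalently that for $r<d_1$ the points $w_i$ admit a common destabilizing one-parameter subgroup --- is only conjectured, and the hypothesis that $K$ is the full connected isometry group of $\XX$ enters your sketch only heuristically (``should guarantee\ldots''). Your own observation about multilinear invariants (e.g.\ an invariant skew form giving $s=2$ while $d_1>2$) shows that no formal reversal of the polarization argument can close this. Be aware that the paper does not establish such a general mechanism either: its proof of $r_{ss}=d_1$ is case-by-case. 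It invokes the classification of rank-semi-continuous varieties from \cite{Petukh-Tsan} (the ten families in the table of Section \ref{Sect SecVar}), reads off $d_1$ from Kac's tables \cite{Kac-nilp-orb} and the nullcones from \cite{Zak-Book}, and checks $r_{ss}=d_1$ in each case; the maximality hypothesis on $K$ is what makes the table's value of $d_1$ (computed for the full linear automorphism group of $\XX$) the relevant one. So to complete your proposal you must either supply a new argument for the lower bound $r_{ss}\ge d_1$ --- which would in fact be a conceptual improvement on the paper --- or fall back on the classification as the paper does.
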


The first part of the theorem is a consequence of a result of Landsberg and Manivel on the ideals of secant varieties, \cite{Lands-Mani-2004-IdealsSecVar}. The second part is obtained from a classification of the rank-semi-continuous varieties by A. V. Petukhov and the third author, \cite{Petukh-Tsan}. These results are somewhat phenomenological, but they suggest that secant varieties could fit well in the context of momentum and invariant theory. This is also evident from Wildberger's method for study of momentum images, where secant spaces play a key role. A further conceptual understanding of these relations remains as an open theme.

\subsubsection*{Acknowledgements}
This work was initiated at a seminar at Ruhr-Universit\"at Bochum in Winter 2014. We would like to thank Peter Heinzner and Alan Huckleberry for helpful discussions and support. T.M. is supported by Polish Ministry of Science and Higher Education ``Diamentowy Grant'' no. DI2013 016543 and ERC grant QOLAPS. E.H. and V.V.T. were supported by the project SFB/TR12 ``Symmetry and universality in mesoscopic systems'' at Ruhr-Universit\"at Bochum. In the latter stage of the work V.V.T was supported by DFG Schwerpunktprogramm 1388 ``Darstellungstheorie''.

\section{Setting and preliminaries}

Here we introduce the basic notation and conventions for the rest of the text. Let $K$ be a connected, simply connected, compact Lie group with semisimple Lie algebra ${\mk k}$. Let $G$ be the complexification of $K$ and ${\mk g}$ be its Lie algebra. We denote by $(.\vert.)$ the Killing form on ${\mk g}$, its restrictions to various subspaces, and induced forms on dual spaces. Let $T\subset K$ be a Cartan subgroup, $W=N_K(T)/T$ be the Weyl group and $H=T^\CC$ be the complexified Cartan subgroup. Let $\Lambda\subset{\mk h}^*$ be the integral weight lattice of $H$. Then the Killing form is positive definite on the real span $\Lambda_\RR$. We have a $W$-equivariant isomorphism $\Lambda_\RR\to i{\mk t}\cong{\mk t^*}$. Let $\Delta\subset\Lambda$ be the root system, so that
$$
{\mk g} = {\mk h}\oplus\left(\bigoplus\limits_{\alpha\in\Delta}{\mk g}^\alpha\right)
$$
Let $e_\alpha\in{\mk g}^\alpha$ be root vectors. We may, and do, assume that they are chosen so that
$$
( e_\alpha | e_\beta ) = \begin{cases} 1 & {\rm if}\;\; \alpha+\beta=0 \\ 0 & {\rm if}\;\; \alpha+\beta\ne 0 \end{cases} \;.
$$
Let $h_\alpha=[e_\alpha,e_{-\alpha}]$. Then, up to conjugating $K$ within $G$, we have
$$
{\mk k} = {\rm Span}\{ ih_\alpha, e_\alpha-e_{-\alpha},i(e_\alpha+e_{-\alpha}) \;;\; \alpha\in\Delta\} \;.
$$
Let $B\subset G$ be a Borel subgroup with $T\subset B$, let ${\mk t}^+$ be the corresponding Weyl chamber, $\Lambda^+$ the set of integral dominant weights, and $\Pi$ the set of simple roots. We consider the Killing form on ${\mk k}$ and use it to set an isomorphism between ${\mk k}$ and ${\mk k}^*$, and to embed ${\mk t}^*$ as a subspace of ${\mk k}^*$.

Let $V$ be a finite-dimensional $G$-module with a $K$-invariant Hermitian form, which we also denote by $\langle,\rangle$. Let $\Lambda(V)$ denote the set of weights and let $\lambda$ be the highest weight of $V$. We denote by $V^\nu$ the weight space of $V$ with weight $\nu\in\Lambda$ and, for $\nu\in\Lambda(V)$, we denote by $v^{\nu}$ an arbitrary weight vector with norm 1. The weight space decomposition $V=\oplus_{\nu\in\Lambda(V)}V^\nu$ is orthogonal. For any subset $M\subset \Lambda(V)$, we denote $V_M=\oplus_{\nu\in M}V^\nu$.

Let $\PP=\PP(V)$ be the projective space of $V$. We consider the following $K$-equivariant momentum map:
$$
\mu = \mu_K : \PP \lw {\mk k}^* \;,\quad \mu[v](\xi)=\frac{1}{i}\frac{\langle\xi v,v\rangle}{\langle v,v\rangle} \;,\; [v]\in\PP,\xi\in{\mk k}\;.
$$
We refer to Kirwan, \cite{Kirwan}, although the projective case was studied earlier, notably by Guillemin and Sternberg. Further references can be found in \cite{Kirwan}.

\begin{theorem}{\rm (Convexity theorem, \cite{Kirwan})}
The intersection
$$
\mu(\PP)^+=\mu(\PP)\cap\mk t^+ \;.
$$
of the momentum image with the Weyl chamber is a convex polytope.
\end{theorem}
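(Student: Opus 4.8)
The plan is to reduce this to the abelian convexity theorem of Atiyah and of Guillemin--Sternberg. Recall that theorem: if a torus $T'$ acts in a Hamiltonian fashion on a compact connected symplectic manifold $M'$ with momentum map $\mu_{T'}$, then the fibers of $\mu_{T'}$ are connected and $\mu_{T'}(M')$ is the convex hull of the finitely many images of the connected components of the fixed-point set $(M')^{T'}$, hence a convex polytope. Applied to the restricted $T$-action on $\PP=\PP(V)$ --- whose $T$-fixed-point components are the projectivized weight spaces $\PP(V^\nu)$, on each of which $\mu_T$ is the constant $\nu$ --- this already gives $\mu_T(\PP)=\mc C(\Lambda(V))$, a polytope. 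What remains is to pass from $\mu_T$ to $\mu_K$ and to intersect with the Weyl chamber.

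The second step is the \emph{local} structure of $\mu(\PP)^+$. Fix $\xi\in\mu(\PP)^+$ and $[v]\in\mu^{-1}(\xi)$, and invoke the Marle--Guillemin--Sternberg normal form for $\mu$ near the orbit $K[v]$: in suitable coordinates $\mu$ is an explicit model built from the coadjoint orbit $K\xi$, the isotropy group $K_{[v]}$, and a symplectic slice representation. Passing to the symplectic cross-section $Y_\sigma=\mu^{-1}(\mk k^*_\sigma)$ at the open face $\sigma$ of $\mk t^+$ containing $\xi$ turns the local picture into the image of a Hamiltonian torus momentum map on $Y_\sigma$, so by the abelian theorem one reads off that, near $\xi$, the set $\mu(\PP)^+$ is the intersection of $\mk t^+$ with a rational polyhedral cone of vertex $\xi$. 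Hence $\mu(\PP)^+$ is closed (being the image of the compact $\PP$), connected, and locally convex at each of its points, with only finitely many distinct local cone-types occurring, since $\PP$ has finitely many orbit types and finitely many distinct subtorus fixed-point sets.

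The last step is to upgrade local convexity to global convexity and to extract the polytope property. The decisive global input is that every fiber $\mu^{-1}(\xi)$, $\xi\in\mk k^*$, is connected --- Atiyah's connectedness lemma --- which is proved by Morse--Bott theory of the linear components of $\mu$, or of $||\mu-\xi||^2$, on the compact Kähler manifold $\PP$, where these functions have critical manifolds of even index; this is exactly the circle of ideas that the present paper develops for $||\mu||^2$. Combining fiber-connectedness (for $\mu$ and for the cross-section momentum maps) with a local-to-global principle for maps with connected fibers onto convex sets (in the spirit of Condevaux--Dazord--Molino, Hilgert--Neeb--Plank, and Sjamaar) forces the closed, connected, locally convex set $\mu(\PP)^+$ to be convex, and the finiteness of local cone-types then makes it a polytope. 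I expect the fiber-connectedness step to be the main obstacle: it is the one irreducibly global ingredient, resting on the Morse theory of the norm-square (or of the linear components of $\mu$) on $\PP$, whereas local convexity and the finiteness of the combinatorial data follow rather formally from the normal form and the abelian theorem. Alternatively, for the projective space of a representation one can argue algebraically, via the Kempf--Ness theorem and the shifting trick, reducing convexity and finiteness to the finite generation of the relevant moment semigroup of dominant weights.
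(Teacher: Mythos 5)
The paper does not prove this statement at all---it is quoted as a classical result with a citation to Kirwan---so the comparison can only be with the cited source. Your outline is correct as a proof sketch and assembles the standard ingredients, but it follows the later ``local-to-global'' route: Atiyah--Guillemin--Sternberg for the torus, the Marle--Guillemin--Sternberg normal form and symplectic cross-sections to see that near any $\xi$ the set $\mu(\PP)^+$ is the trace on $\mk t^+$ of a rational polyhedral cone with vertex $\xi$, then fiber connectedness plus a Tietze--Nakajima-type principle (Condevaux--Dazord--Molino, Hilgert--Neeb--Plank, Sjamaar) to globalize, with finiteness of local cone types giving the polytope property. Kirwan's own argument, which is what the paper leans on, is Morse-theoretic throughout: convexity and the needed connectedness statements are extracted directly from the minimal strata of the functions $||\mu-\xi||^2$ and of linear components of $\mu$, with no local normal form; the payoff of her route is that the same apparatus produces the stratification of $\PP$ that the present paper uses later for the critical-point analysis of $||\mu||^2$, while the payoff of yours is a cleaner separation of the formal local picture from the single genuinely global input, which you correctly identify as fiber connectedness. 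Two small caveats: the connectedness of the fibers of the \emph{nonabelian} momentum map is due to Kirwan (Atiyah's lemma covers the torus case), so attributing it to Atiyah undersells exactly the step you call the main obstacle; and note that your first step $\mu_T(\PP)=\mc C(\Lambda(V))$ is a statement about $\mu_T=pr_{\mk t}\circ\mu$ only---for the $K$-momentum map, $\mu(\PP(V_M))=\mc C(M)$ requires $M$ root-distinct, as in Wildberger's lemma in the paper---so it cannot by itself be ``passed'' to $\mu_K$ without the cross-section or Morse-theoretic machinery you invoke afterwards. The algebraic alternative you mention (Kempf--Ness and the finitely generated semigroup of dominant weights) is indeed the quickest way to get rationality of the polytope in this projective setting.
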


This polytope is called the {\it momentum polytope of $\PP$}.\\

Now suppose that $V=V_\lambda$ is irreducible with highest weight $\lambda\in\Lambda^+$. Let $\XX=G[v^{\lambda}]\subset\PP$ denote projective $G$-orbit of the highest weight line. Then $\XX$ is a flag variety of $G$ and is characterized in this setting as: (1) the unique closed $G$-orbit in $\PP$; (2) the unique complex $K$-orbit in $\PP$; (3) the set in $\PP$ where $||\mu||^2$ takes its maximum value. We have $\XX^T=\{[v^{w\lambda}],w\in W\}$. The momentum map restricted to $\XX$ is bijective onto its image, $\mu(\XX)=K\lambda$, and is an isomorphism of K\"ahler manifolds, with respect to the Fubini-Study form induced from the projective embedding, on the one hand, and, on the other hand, the Kostant-Kirillov-Soriau K\"ahler form on coadjoint orbits of compact groups.

In this paper we study the connections between the projective geometry in $\PP$ related to the $K$-action and the convex Euclidean geometry in ${\mk k}^*$ related to the momentum map. Our approach is based on the method of Wildberger, \cite{Wildberger}, sketched in the next section. One key observation is that linear span corresponds to convex hull, i.e. for suitable subsets $A\subset\PP$ we have 
$$
\mu(\PP({\rm Span}(A)))={\mc C}(\mu(A))\;.
$$
In the present context this was formalized and applied by Wildberger, \cite{Wildberger}, whose method is presented below.

For any subset $A$ of ${\mk k}^*$ we denote: 

${\mc C}(A)={\rm Conv}(A)$ the convex hull of $A$;

${\rm Aff}(A)$ the affine span of $A$;

${\rm Cone}(A) = {\rm Span}_{\RR^+}(A)$ the convex cone spanned by $A$;

${\rm Cone}_a(A) = a+{\rm Cone}(A-a)$ for $a\in\mk k^*$, the affine convex cone with vertex $a$ generated by the rays from $a$ through $A$;

$A^+=A\cap{\mk t}^+$ the intersection of $A$ with the positive Weyl chamber;

$A^{++}=A\cap({\mk t}^+)^\circ$ the intersection of $A$ with the interior of the positive Weyl chamber;

If $A$ consists of 2 points $\xi,\xi'$, we denote by $E_{\xi'}^{\xi}$ or $[\xi,\xi']$ the convex hull, which is a line segment in this case.

Most of the calculations take place in the Cartan subalgebra and the weight lattice $\Lambda\subset{\mk t}^*$. Let $Q=\langle\Delta\rangle_\ZZ$ denote the root-lattice.

Let $A\subset\Lambda_\RR$ be an integral polytope and $a\in A$ be a vertex. If $F\subset A$ is a face and $a\in F$, we say that $F$ is a face of $A$ at $a$. If $E^a_{a'}$ is an edge at $a$ and $b\subset E^a_{a'}\setminus\{a\}$ is the closest $a$ integral point, we call $E^a_{b}$ an edge-generator of $A$ at $a$.

\subsection{The Weyl polytope}\label{Sect Weyl polytope}

In this section we collect some necessary facts and notation on the Weyl polytope ${\mc C}(W\lambda)$ for a given dominant weight $\lambda$. Since $W\lambda\subset \mu(\PP(V_\lambda))\cap{\mk t}^*\subset{\mc C}(W\lambda)$, the understanding of the Weyl polytopes is an essential step in the understanding of the momentum images of irreducible representations. Recall first of all that, for every $\lambda\in\Lambda$, $W\lambda$ is exactly the set of extreme points of ${\mc C}(W\lambda)$.

Fix $\lambda\in\Lambda^+$ and $\PP=\PP(V_\lambda)$ for this section. For $\alpha\in\Pi$ we denote by $\omega_\alpha$ the corresponding fundamental weight and write
$$
\lambda = \sum\limits_{\alpha\in\Pi} \lambda_\alpha\omega_\alpha \quad,\quad \lambda_\alpha = n_{\alpha,\lambda} = 2\frac{(\alpha|\lambda)}{(\alpha|\alpha)} \;.
$$
The weight $\lambda$ defines a partition of the set of simple roots
\begin{gather}\label{For Pi=Pilambda u Pi0}
\Pi=\Pi^{\lambda}\sqcup\Pi_0 \quad,\quad \Pi^{\lambda}=\{\alpha\in\Pi:\lambda_\alpha\ne0\} \quad,\quad \Pi_0=\{\alpha\in\Pi:\lambda_\alpha=0\} \;.
\end{gather}
This partition only depends on the face of the Weyl chamber to whose relative interior $\lambda$ belongs. The stabilizer $W_\lambda$ is generated by the simple reflections $s_\alpha,\alpha\in\Pi_0$. The edges of ${\mc C}(W\lambda)$ at $\lambda$ are
\begin{gather}\label{For EdgesCWlambdaAtlambda}
E_{ws_\alpha\lambda}^\lambda \quad,\quad \alpha\in\Pi^\lambda , w\in W_\lambda \;.
\end{gather}
(Recall that $E_p^q$ denotes the line segment between two points $p,q\in{\mk k}^*$.) The edge $E_{s_\alpha\lambda}^\lambda$ intersects the interior of the Weyl chamber ${\mk t}^{++}$ if and only if $(\alpha,\beta)\ne0$ for all $\beta\in\Pi_0$. In fact, $E_{s_\alpha\lambda}^\lambda$ belongs to the hyperplane $\mc H_\beta$ if and only if $(\alpha,\beta)=0$. We denote
$$
\eta_\alpha = \lambda-\frac12 \lambda_\alpha\alpha = E_{s_\alpha\lambda}^\lambda \cap\mc H_\alpha \quad,\quad {\rm for} \quad \alpha\in\Pi^\lambda \;.
$$
The faces of ${\mc C}(W\lambda)$ at $\lambda$ are determined by the edges they contain and hence are parametrized by the subsets of the set $\Pi$ of simple roots. The facets of ${\mc C}(W\lambda)$ at $\lambda$ are given by hyperplanes orthogonal to the fundamental rays corresponding to simple roots. If $\lambda$ is singular, then not every ray brings an actual facet of the Weyl polytope, some rays give rise to lower dimensional faces. For any $\nu\in\Lambda^+$, we denote
$$
\xi_{\alpha}(\nu) = (\RR^+\omega_\alpha) \cap {\mc C}(W_{\Pi\setminus\{\alpha\}}\nu) \quad,\quad \Xi(\nu)=\{\xi_\alpha(\nu);\alpha\in\Pi\}\;.
$$
We keep the simpler notation $\xi_\alpha$ for the values $\xi_{\alpha}(\lambda)$ at our fixed dominant weight.

\begin{rem}
We have
$$
{\mc C}(W\lambda)^+ = \left(\lambda-{\rm Cone}(W_\lambda\Pi^\lambda)\right)^+ \;.
$$
\end{rem}

\begin{prop}\label{Prop TriangCWlambda}
1) If $\#\Pi^\lambda>1$, the dominant part ${\mc C}(W\lambda)^+$ of the Weyl polytope admits the following simplicial subdivision:
$$
{\mc C}(W\lambda)^+ = S_0 \cup S_\lambda \cup \left( \bigcup\limits_{\alpha\in\Pi^\lambda} S_{\alpha,\lambda} \right) \;,
$$
where
\begin{gather}\label{For SimplicialSubCWlambda}
\begin{array}{rl}
S_0 & = {\mc C}\{0,\xi_\beta \;:\; \beta\in\Pi \} \\
S_\lambda & = {\mc C}\{\lambda,\xi_\beta \;:\; \beta\in\Pi \} \\
S_{\alpha,\lambda} & = {\mc C}\{\lambda,\eta_\alpha,\xi_\beta \;:\; \beta\in\Pi\setminus\alpha \} \;.\\
\end{array}
\end{gather}

2) If $\Pi^\lambda=\{\alpha\}$ is a singleton, then $S_\lambda$ is a face of $S_0$ and the simplicial subdivision takes the form ${\mc C}(W\lambda) = S_0\cup S_{\alpha,\lambda}$.
\end{prop}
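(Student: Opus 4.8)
The plan is to identify the vertices of ${\mc C}(W\lambda)^+$, cut the polytope into two pieces by an affine hyperplane through the points $\xi_\beta$, and triangulate each piece by coning from a vertex. Throughout one may assume that every connected component of the Dynkin diagram carries a simple root of $\Pi^\lambda$ (equivalently $(\lambda|\omega_\beta)>0$ for all $\beta$), the remaining components contributing nothing to ${\mc C}(W\lambda)^+$. For the vertices: from the description of the faces of ${\mc C}(W\lambda)$ at $\lambda$ recalled above — the edges \eqref{For EdgesCWlambdaAtlambda}, the crossing points $\eta_\alpha=E^\lambda_{s_\alpha\lambda}\cap{\mc H}_\alpha$, and the points $\xi_\beta$ at which the edge ray $\RR^+\omega_\beta$ of ${\mk t}^+$ leaves ${\mc C}(W\lambda)$ through the facet ${\mc C}(W_{\Pi\setminus\{\beta\}}\lambda)$ — together with the facts that $\lambda$ is the unique element of $W\lambda$ in ${\mk t}^+$ and that $0$ is the apex of ${\mk t}^+$, I expect that every vertex of ${\mc C}(W\lambda)^+$ lies in $\{0,\lambda\}\cup\{\eta_\alpha:\alpha\in\Pi^\lambda\}\cup\{\xi_\beta:\beta\in\Pi\}$. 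Conversely each of these points does lie in ${\mc C}(W\lambda)^+$: $0$ is the barycenter of $W\lambda$; $\lambda\in W\lambda$; $\eta_\alpha=\tfrac12(\lambda+s_\alpha\lambda)\in{\mc C}(W\lambda)$ is dominant since $(\eta_\alpha|\gamma)=(\lambda|\gamma)-\tfrac12\lambda_\alpha(\alpha|\gamma)\ge 0$ for every $\gamma\in\Pi$; and $\xi_\beta\in{\mc C}(W_{\Pi\setminus\{\beta\}}\lambda)\cap{\mk t}^+$ by definition. This gives ${\mc C}(W\lambda)^+={\mc C}\big(\{0,\lambda\}\cup\{\eta_\alpha:\alpha\in\Pi^\lambda\}\cup\{\xi_\beta:\beta\in\Pi\}\big)$.

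Next I would set up the slicing hyperplane. Since $W_{\Pi\setminus\{\beta\}}$ fixes $\omega_\beta$, the form $(\cdot|\omega_\beta)$ is constant on the orbit $W_{\Pi\setminus\{\beta\}}\lambda$, so $\xi_\beta=c_\beta\omega_\beta$ with $c_\beta=(\lambda|\omega_\beta)/\|\omega_\beta\|^2>0$. Let $\ell$ be the linear functional $\ell(x)=\sum_{\beta\in\Pi}\frac{2(x|\beta)}{(\beta|\beta)\,c_\beta}$, so that $\ell(\omega_\gamma)=1/c_\gamma$, $\ell(\xi_\beta)=1$ for all $\beta$, and $\ell>0$ on ${\mk t}^+\setminus\{0\}$. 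Writing $x=\sum_\gamma s_\gamma\omega_\gamma$ with $s_\gamma\ge 0$, one reads off that $\{x\in{\mk t}^+:\ell(x)\le 1\}=S_0$ and $\{x\in{\mk t}^+:\ell(x)=1\}=T:={\mc C}\{\xi_\beta:\beta\in\Pi\}$. Since $S_0\subseteq{\mc C}(W\lambda)^+$, this yields the decomposition ${\mc C}(W\lambda)^+=S_0\cup R$ with $R:={\mc C}(W\lambda)^+\cap\{\ell\ge 1\}$ and $S_0\cap R=T$. When $\#\Pi^\lambda>1$, the vertex $\lambda$ equals neither $0$ nor any $\xi_\beta=c_\beta\omega_\beta$ (it is a positive combination of at least two distinct fundamental weights), so $\lambda\notin S_0$ and $\ell(\lambda)>1$; likewise $\ell(\eta_\alpha)\ge 1$, for otherwise the vertex $\eta_\alpha$ would lie in $S_0={\mc C}(\{0\}\cup\{\xi_\beta\})$, forcing $\eta_\alpha\in\{0\}\cup\{\xi_\beta:\beta\in\Pi\}$, which it is not.

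Finally I would cone $R$ from $\lambda$. A facet of ${\mc C}(W\lambda)^+$ is either the intersection with ${\mk t}^+$ of a facet of ${\mc C}(W\lambda)$ — and such a facet has dominant outer normal (else applying a simple reflection to a point of it lying in the open chamber would strictly increase the supporting form), hence contains $\lambda$ — or a wall facet ${\mc H}_\beta\cap{\mc C}(W\lambda)^+$, which contains $\lambda$ exactly when $\beta\in\Pi_0$. So the facets of ${\mc C}(W\lambda)^+$ not containing $\lambda$ are precisely ${\mc H}_\beta\cap{\mc C}(W\lambda)^+$ for $\beta\in\Pi^\lambda$; by the vertex list such a facet equals ${\mc C}\big(\{0,\eta_\beta\}\cup\{\xi_\gamma:\gamma\ne\beta\}\big)$, its $\{\ell=1\}$-slice is ${\mc H}_\beta\cap T={\mc C}\{\xi_\gamma:\gamma\ne\beta\}$ (a facet of the simplex $T$), and hence its $\{\ell\ge 1\}$-part is ${\mc H}_\beta\cap R={\mc C}\big(\{\eta_\beta\}\cup\{\xi_\gamma:\gamma\ne\beta\}\big)$. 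Thus the facets of $R$ not containing $\lambda$ are $T$ and the ${\mc H}_\alpha\cap R$ ($\alpha\in\Pi^\lambda$), so coning from $\lambda$ gives the subdivision into simplices with pairwise disjoint interiors $R={\mc C}(\{\lambda\}\cup T)\cup\bigcup_{\alpha\in\Pi^\lambda}{\mc C}\big(\{\lambda\}\cup({\mc H}_\alpha\cap R)\big)=S_\lambda\cup\bigcup_{\alpha\in\Pi^\lambda}S_{\alpha,\lambda}$, which together with ${\mc C}(W\lambda)^+=S_0\cup R$ is the asserted subdivision~(1).

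For part (2), when $\Pi^\lambda=\{\alpha\}$ the group $W_{\Pi\setminus\{\alpha\}}=W_{\Pi_0}$ fixes $\lambda$, so ${\mc C}(W_{\Pi\setminus\{\alpha\}}\lambda)=\{\lambda\}$, $\xi_\alpha=\lambda$, and $\ell(\lambda)=1$; then $T=S_\lambda={\mc C}(\{\lambda\}\cup\{\xi_\gamma:\gamma\in\Pi_0\})$ is already a facet of $S_0$, the cone over it from $\lambda$ degenerates onto $T$, and the subdivision collapses to ${\mc C}(W\lambda)^+=S_0\cup S_{\alpha,\lambda}$. The hard part will be the vertex–facet bookkeeping of the first and last steps — in particular confirming that the wall facet ${\mc H}_\beta\cap{\mc C}(W\lambda)^+$ carries no vertices beyond $0,\eta_\beta$ and the $\xi_\gamma$ ($\gamma\ne\beta$) — when $\lambda$ is singular or the rank is small, where several of the listed simplices degenerate (e.g. in rank $2$ one has $\eta_\alpha=\xi_\beta$ for $\{\beta\}=\Pi\setminus\{\alpha\}$, so only $S_0\cup S_\lambda$ remains); this degeneracy is exactly why the case $\#\Pi^\lambda=1$ is singled out.
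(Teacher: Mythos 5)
Your argument stands or falls with its first step, which you yourself flag with ``I expect'': that every vertex of ${\mc C}(W\lambda)^+$ lies in $\{0,\lambda\}\cup\{\eta_\alpha:\alpha\in\Pi^\lambda\}\cup\{\xi_\beta:\beta\in\Pi\}$. You never prove this, and it is false in rank $\geq 4$ --- and with it the asserted subdivision itself, so the later coning step cannot be repaired. Take $K=SU(5)$ (type $A_4$), $\lambda=\rho=\omega_1+\omega_2+\omega_3+\omega_4$, and $\nu=\lambda-\alpha_2-\alpha_3=2\omega_1+2\omega_4$. Then $\nu$ is dominant and $\lambda-\nu\in{\rm Cone}(\Pi)$, so $\nu\in{\mc C}(W\lambda)^+$. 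The inequalities $(\cdot|\alpha_2)\geq 0$ and $(\cdot|\alpha_3)\geq 0$ are valid on ${\mk t}^+$, and $(\cdot|\omega_1)\leq(\lambda|\omega_1)$, $(\cdot|\omega_4)\leq(\lambda|\omega_4)$ are valid on ${\mc C}(W\lambda)$; all four are active at $\nu$, and $\alpha_2,\alpha_3,\omega_1,\omega_4$ are linearly independent (a relation would place a nonzero vector in ${\rm Span}(\alpha_2,\alpha_3)\cap{\rm Span}(\alpha_2,\alpha_3)^\perp$), so these equalities cut out the single point $\nu$, which is therefore a vertex of ${\mc C}(W\lambda)^+$. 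But $\nu$ is none of $0$, $\lambda$, $\eta_\alpha=\lambda-\tfrac12\alpha$, or $\xi_\beta\in\RR^+\omega_\beta$. Since $S_0$, $S_\lambda$, $S_{\alpha,\lambda}$ are convex subsets of ${\mc C}(W\lambda)^+$ whose extreme points lie in that list, an extreme point of ${\mc C}(W\lambda)^+$ outside the list belongs to none of them; hence $\nu\notin S_0\cup S_\lambda\cup\bigcup_\alpha S_{\alpha,\lambda}$. (Concretely, in coordinates $\lambda=(2,1,0,-1,-2)$, $\nu=(2,0,0,0,-2)$: the functional $x_1-x_5$ equals $4$ at $\nu$, while among the listed points it equals $4$ only at $\lambda,\eta_{\alpha_2},\eta_{\alpha_3}$, whose convex hull has second coordinate at least $\tfrac12$.) Your facet bookkeeping breaks at the same place: ${\mc H}_{\alpha_2}\cap{\mc C}(W\lambda)^+$ contains the vertex $\nu$, so it is not ${\mc C}(\{0,\eta_{\alpha_2}\}\cup\{\xi_\gamma:\gamma\neq\alpha_2\})$.

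Note also that the danger is not where you locate it at the end (singular $\lambda$, small rank): it is the regular case in higher rank that fails. In type $A_4$ with $\lambda=\rho$ the dominant part of the permutohedron has $16$ vertices (they are indexed by the subsets of facet inequalities $(\cdot|\omega_k)\leq(\lambda|\omega_k)$ chosen to be active), whereas the proposition supplies only $10$ candidate points; through rank $3$ the two counts happen to agree, which is presumably how the statement was arrived at, and your slicing-and-coning scheme, as well as the preliminary observations (the formula $\xi_\beta=c_\beta\omega_\beta$, the fact that facets of ${\mc C}(W\lambda)$ meeting the open chamber have dominant normals and contain $\lambda$), are sound in that range. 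The paper offers no proof of this proposition, so there is nothing to compare your route with; the defect lies in the statement itself, and a correct version needs either a rank restriction or a finer subdivision whose vertex set includes points such as $\nu$ above.
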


\subsection{Wildberger's method}

Let $V$ be a finite dimensional unitary $K$-module. Recall that $\Lambda(V)$ denotes the set of weights of $V$. For $\nu\in\Lambda(V)$, we denote by $V^\nu\subset V$ the weight space and by $v^{\nu}$ an arbitrary weight vector with norm 1. The weight space decomposition $V=\oplus_{\nu\in\Lambda(V)}V^\nu$ is orthogonal. For any subset $M\subset \Lambda(V)$, we denote $V_M=\oplus_{\nu\in M}V^\nu$.

\begin{defin}{\rm (Wildberger, \cite{Wildberger})}

If $\alpha\in\Delta$ be a root, a weight $\nu\in\Lambda$ is called $\alpha$-distinct, if $\nu-s_\alpha\nu\ne\alpha$. A subset $M\subset\Lambda$ is called root-distinct if $M\cap(\Delta+M)=\emptyset$.
\end{defin}

In fact, the above definition differs from Wildberger's, formally but not conceptually. More precisely, Wildberger calls a dominant weight $\lambda$ root-distinct of its Weyl group orbit $W\lambda$ is a root-distinct set in our sense. We are however going to use more general sets than just Weyl group orbits, so we adopt the above formal definition.

The examples below and the following two lemmata play a fundamental role in our calculations.

\begin{ex}
The set of simple roots $\Pi$ is root-distinct. As a consequence, for $\lambda\in\Lambda^{++}$, the set $\lambda-\Pi$ is root-distinct; the weights in this set are the generators of the edges of the Weyl polytope ${\mc C}(W\lambda)$ at $\lambda$.
\end{ex}

\begin{lemma}{\rm (Wildberger \cite{Wildberger})}\label{Lemma WildbergerWlambdaRD}
Let $\lambda\in\Lambda^+$ and let $\lambda=\sum\limits_{\alpha\in\Pi} \lambda_\alpha\omega_\alpha$ be its expression in the basis of fundamental weights. The orbit $W\lambda$ is a root-distinct set if and only if $\lambda_\alpha\ne 1$ for all $\alpha\in\Pi$.
\end{lemma}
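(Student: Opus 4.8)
\emph{Overview and the easy implication.}
I would prove the two implications separately; the implication ``$W\lambda$ root-distinct $\Rightarrow$ $\lambda_\alpha\ne 1$ for all $\alpha\in\Pi$'' is immediate and the converse needs a short reduction together with one standard fact about coroots. For the easy direction, suppose $\lambda_\alpha=1$ for some simple root $\alpha$. Then $s_\alpha\lambda=\lambda-\tfrac{2(\alpha|\lambda)}{(\alpha|\alpha)}\alpha=\lambda-\alpha$, so the two (necessarily distinct, since $0\notin\Delta$) elements $\lambda$ and $s_\alpha\lambda$ of $W\lambda$ differ by the root $\alpha$; hence $\lambda\in W\lambda\cap(\Delta+W\lambda)$ and $W\lambda$ is not root-distinct. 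Contrapositively, root-distinctness of $W\lambda$ forces $\lambda_\alpha\ne 1$ for every $\alpha\in\Pi$.

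\emph{Reduction step for the converse.}
Now assume $W\lambda$ is not root-distinct, so there are $\mu_1\ne\mu_2$ in $W\lambda$ with $\beta:=\mu_1-\mu_2\in\Delta$. Since $W$ acts on $\Lambda_\RR$ by isometries of the (positive definite) Killing form, $(\mu_1|\mu_1)=(\mu_2|\mu_2)=(\lambda|\lambda)$, whence $0=(\mu_1-\mu_2\,|\,\mu_1+\mu_2)=(\beta\,|\,2\mu_1-\beta)$, i.e. $\tfrac{2(\beta|\mu_1)}{(\beta|\beta)}=1$. Writing $\mu_1=v\lambda$ with $v\in W$ and putting $\gamma:=v^{-1}\beta\in\Delta$, the $W$-invariance of the form gives $\tfrac{2(\gamma|\lambda)}{(\gamma|\gamma)}=1$. (The converse of this reduction also holds: if $\gamma\in\Delta$ satisfies $\tfrac{2(\gamma|\lambda)}{(\gamma|\gamma)}=1$ then $s_\gamma\lambda=\lambda-\gamma$, so $\lambda$ and $s_\gamma\lambda$ already witness non-root-distinctness. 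Thus $W\lambda$ fails to be root-distinct precisely when such a $\gamma$ exists, which I would record as a clean reformulation even though only one direction is needed here.)

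\emph{Conclusion and the main point.}
It remains to show that a root $\gamma$ with $\tfrac{2(\gamma|\lambda)}{(\gamma|\gamma)}=1$ forces some $\lambda_\alpha=1$. As $\lambda$ is dominant, $\tfrac{2(\gamma|\lambda)}{(\gamma|\gamma)}\le 0$ for negative $\gamma$, so $\gamma$ must be a positive root. Then the coroot $\gamma^\vee=\tfrac{2\gamma}{(\gamma|\gamma)}$ is a positive coroot, hence $\gamma^\vee=\sum_{\alpha\in\Pi}m_\alpha\,\alpha^\vee$ with $m_\alpha\in\ZZ_{\ge 0}$ not all zero, where $\alpha^\vee=\tfrac{2\alpha}{(\alpha|\alpha)}$. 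Pairing with $\lambda$ yields $1=\sum_{\alpha\in\Pi}m_\alpha\lambda_\alpha$, a sum of non-negative integers; hence exactly one summand equals $1$ and the rest vanish, giving $\lambda_{\alpha_0}=1$ for the corresponding $\alpha_0\in\Pi$, which completes the proof. There is no serious obstacle: the only input beyond elementary bookkeeping is the standard fact that $\gamma\mapsto\gamma^\vee$ sends positive roots to non-negative integer combinations of the simple coroots, and the one idea that makes the argument short is transporting an arbitrary coincidence $\mu_1-\mu_2\in\Delta$ inside $W\lambda$ back to the vertex $\lambda$, turning it into the single numerical condition $\tfrac{2(\gamma|\lambda)}{(\gamma|\gamma)}=1$.
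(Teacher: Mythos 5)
Your proof is correct. Note that the paper itself gives no argument for this lemma --- it is quoted from Wildberger's paper \cite{Wildberger} --- so there is no in-text proof to compare against; your write-up supplies a complete and self-contained justification. Both directions check out: the reduction of an arbitrary coincidence $\mu_1-\mu_2\in\Delta$ in $W\lambda$ to the single condition $\tfrac{2(\gamma|\lambda)}{(\gamma|\gamma)}=1$ for some $\gamma\in\Delta$ is exactly the right normalization (using that $W$ preserves the Killing form), and the final step correctly invokes the standard fact that a positive coroot is a non-negative integer combination of the simple coroots, so that $1=\sum_\alpha m_\alpha\lambda_\alpha$ forces some $\lambda_{\alpha_0}=1$. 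This is essentially the expected argument for the statement, and no gaps remain.
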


\begin{lemma}{\rm (Wildberger \cite{Wildberger})}\label{Lemma Wildberger}

If $M\subset\Lambda(V)$ is a root-distinct set, then $\mu(\PP(V_M))={\mc C}(M)$ and hence 

(a) $\mu(\PP(V_M))\subset{\mk t}^*$;

(b) $\mu(\PP(V^\nu))=\nu$, for $\nu\in\Lambda(V)$;

If the orbit $W\lambda$ is root distinct, then ${\mc C}(W\lambda)=\mu(\PP(V_{W\lambda}))=\mu(\PP)\cap{\mk t}^*$.
\end{lemma}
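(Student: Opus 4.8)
The plan is to compute $\mu[v]$ directly from its definition on the weight-space decomposition; no appeal to the convexity theorem is needed. Fix $[v]\in\PP(V_M)$, rescale so that $\langle v,v\rangle=1$, and write $v=\sum_{\nu\in M}v_\nu$ with $v_\nu\in V^\nu$ (only weights of $M$ occur since $v\in V_M$), so that $\sum_\nu\|v_\nu\|^2=1$; recall that the weight-space decomposition is orthogonal.

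First I would restrict $\mu[v]$ to $\mk t$. For $\xi\in\mk t$ the element $\xi$ acts on $V^\nu$ by the scalar $i\nu(\xi)$, so orthogonality gives $\langle\xi v,v\rangle=\sum_\nu i\nu(\xi)\|v_\nu\|^2$, i.e. the restriction of $\mu[v]$ to $\mk t$ equals $\sum_\nu\|v_\nu\|^2\,\nu$, a convex combination of the elements of $M$. Next I would show that $\mu[v]$ vanishes on the Killing-orthogonal complement of $\mk t$ inside $\mk k$, which by the description ${\mk k}={\rm Span}\{ih_\alpha,\,e_\alpha-e_{-\alpha},\,i(e_\alpha+e_{-\alpha})\}$ is spanned by the vectors $e_\alpha-e_{-\alpha}$ and $i(e_\alpha+e_{-\alpha})$, $\alpha\in\Delta$. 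Evaluating $\mu[v]$ on these reduces to the single condition $\langle e_\alpha v,v\rangle=0$ for every $\alpha\in\Delta$. Since $e_\alpha v_\nu\in V^{\nu+\alpha}$, orthogonality of weight spaces gives $\langle e_\alpha v,v\rangle=\sum_{\nu\in M,\ \nu+\alpha\in M}\langle e_\alpha v_\nu,\,v_{\nu+\alpha}\rangle$, and the root-distinctness hypothesis $M\cap(\Delta+M)=\emptyset$ says precisely that there is no $\nu$ with both $\nu$ and $\nu+\alpha$ in $M$; hence this sum is empty and $\langle e_\alpha v,v\rangle=0$. Therefore $\mu[v]\in\mk t^*$, and combined with the first computation, $\mu[v]=\sum_\nu\|v_\nu\|^2\,\nu$ as an element of $\mk t^*\subset\mk k^*$.

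This already gives $\mu(\PP(V_M))\subseteq\mc C(M)$, and the reverse inclusion is immediate: any point $\sum_\nu t_\nu\nu$ with $t_\nu\ge0$, $\sum_\nu t_\nu=1$, is realized by choosing $v_\nu\in V^\nu$ with $\|v_\nu\|^2=t_\nu$ (possible because $\nu\in\Lambda(V)$ means $V^\nu\ne0$). Hence $\mu(\PP(V_M))=\mc C(M)$; part (a) is the inclusion $\mc C(M)\subseteq\mk t^*$ just obtained, and part (b) is the special case $M=\{\nu\}$. For the last assertion I would apply the statement with $M=W\lambda$, which is root-distinct by hypothesis: since $V_{W\lambda}\subseteq V$, this gives $\mc C(W\lambda)=\mu(\PP(V_{W\lambda}))\subseteq\mu(\PP)\cap\mk t^*$. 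For the opposite inclusion, if $[v]\in\PP$ satisfies $\mu[v]\in\mk t^*$, then repeating the first computation over the full weight set yields $\mu[v]=\sum_{\nu\in\Lambda(V)}\|v_\nu\|^2\nu\in\mc C(\Lambda(V))=\mc C(W\lambda)$, using the standard fact that every weight of $V_\lambda$ lies in the Weyl polytope. Thus $\mu(\PP)\cap\mk t^*=\mc C(W\lambda)$.

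The only step requiring real care is the vanishing off $\mk t$: one must pin down the complement of $\mk t$ in $\mk k$ via the normalization $(e_\alpha|e_{-\alpha})=1$ and the chosen compact real form, and use $K$-invariance of $\langle,\rangle$ — equivalently, that each $\xi\in\mk k$, in particular each $ih_\alpha$, acts skew-Hermitianly — both to justify orthogonality of distinct weight spaces and to see that $\mu[v]\in\mk t^*$ is equivalent to the vanishing of all $\langle e_\alpha v,v\rangle$. Everything else is bookkeeping with the weight-space decomposition.
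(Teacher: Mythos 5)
Your proposal is correct and follows essentially the same route as the paper: the paper's proof is exactly the ``direct calculation with the formula for $\mu$'' using ${\mk g}^\alpha V^\nu\subset V^{\nu+\alpha}$, which you have simply written out in full (restriction to $\mk t$ giving the convex combination, vanishing on $e_\alpha-e_{-\alpha}$, $i(e_\alpha+e_{-\alpha})$ via root-distinctness), and your deduction of the final statement about $W\lambda$ is the intended ``follows immediately'' argument, using $\mc C(\Lambda(V))=\mc C(W\lambda)$.
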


\begin{proof}
Suppose $M\subset \Lambda(V)$ is root-distinct. A direct calculation with the formula for $\mu$, using the fact that ${\mk g}^\alpha V^\nu\subset V^{\nu+\alpha}$, shows that
$$
\mu\left[\sum\limits_{\nu\in M} c_\nu v^\nu\right] = \sum\limits_{\nu\in M} |c_\nu|^2\nu \;,
$$
where we have taken a vector with norm 1, i.e. $\sum\limits_{\nu\in M}|c_\nu|^2=1$.

The last statement in the proposition follows immediately from the previous ones.
\end{proof}

As a consequence Wildberger proved the following.

\begin{theorem}{\rm (Wildberger \cite{Wildberger})}\label{Theo Wildberger}

Let $\lambda\in\Lambda^+$. The momentum image $\mu(\PP(V_\lambda))\subset{\mk k}^*$ is convex if and only if the Weyl group orbit $W\lambda$ is a root-distinct set of weights. When this is the case we have
$$
\mu(\PP(V_\lambda))={\mc C}(K\lambda) \quad,\quad \mu(\PP(V_\lambda))\cap{\mk t}^* = {\mc C}(W\lambda) = \mu(\PP(V_{W\lambda})) \;.
$$
\end{theorem}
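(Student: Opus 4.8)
The plan is to prove both implications, leveraging Lemma~\ref{Lemma Wildberger} and Lemma~\ref{Lemma WildbergerWlambdaRD} for the ``easy'' direction and a more delicate argument for the converse. First I would establish the implication: if $W\lambda$ is root-distinct, then $\mu(\PP(V_\lambda))$ is convex and equals $\mc C(K\lambda)$. Here the strategy is to apply Lemma~\ref{Lemma Wildberger} with $M=W\lambda$: since $W\lambda$ is root-distinct, we get $\mu(\PP(V_{W\lambda}))=\mc C(W\lambda)$ and, in particular, $\mu(\PP)\cap\mk t^*\supset\mc C(W\lambda)$. Combined with the a priori inclusion $\mu(\PP)\cap\mk t^*\subset\mc C(W\lambda)$ noted in Section~\ref{Sect Weyl polytope}, this gives $\mu(\PP)\cap\mk t^*=\mc C(W\lambda)$. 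Then I would use $K$-equivariance of $\mu$ together with the fact that $\mu(\PP)$ meets every $K$-orbit in a single point of $\mk t^+$ (the convexity theorem) to upgrade this to $\mu(\PP)=\mc C(K\lambda)$: indeed $\mu(\PP)=K(\mu(\PP)\cap\mk t^+)=K(\mc C(W\lambda)\cap\mk t^+)$, and one checks that sweeping the dominant part of the Weyl polytope by $K$ recovers exactly $\mc C(K\lambda)$ (every coadjoint orbit through $\mc C(W\lambda)^+$ lies in $\mc C(K\lambda)$, and conversely $\mc C(K\lambda)$ is $K$-invariant with dominant part contained in $\mc C(W\lambda)$). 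Convexity of $\mc C(K\lambda)$ is immediate, so this direction is complete, and the displayed chain of equalities in the statement follows.

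For the converse — if $W\lambda$ is \emph{not} root-distinct, then $\mu(\PP(V_\lambda))$ is not convex — I would argue by exhibiting a point of $\mc C(K\lambda)$, in fact a point in $\mk t^*$ on an edge of the Weyl polytope at $\lambda$, that is not in the momentum image. By Lemma~\ref{Lemma WildbergerWlambdaRD}, failure of root-distinctness means $\lambda_\alpha=1$ for some simple root $\alpha$, i.e.\ $\alpha\in\Pi_1$. Consider the edge $E^\lambda_{s_\alpha\lambda}$ of $\mc C(W\lambda)$; its edge-generator is $\lambda-\alpha$ (since $\lambda-s_\alpha\lambda=\lambda_\alpha\alpha=\alpha$). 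The key claim is that the midpoint, or more robustly some interior point, of this edge is not attained by $\mu$. The mechanism: a point $[v]$ with $\mu[v]$ lying strictly between $\lambda$ and $\lambda-\alpha$ would have to involve the weight spaces $V^\lambda$ and $V^{\lambda-\alpha}$; but $V^{\lambda-\alpha}=\mk g^{-\alpha}V^\lambda$, so $\lambda$ and $\lambda-\alpha$ differ by a root and the pair $\{\lambda,\lambda-\alpha\}$ is \emph{not} root-distinct. One then computes $\mu$ on the two-dimensional space $V^\lambda\oplus V^{\lambda-\alpha}$ (an $\mathfrak{sl}_2$-type calculation, since $V^\lambda\oplus V^{\lambda-\alpha}$ is essentially the top two weight spaces of the restriction to the $\mathfrak{sl}_2$ associated with $\alpha$) and checks that the image of $\mu$ restricted there is a curved arc bulging \emph{away} from the segment $[\lambda-\alpha,\lambda]$ on the side of larger $\|\mu\|^2$ — so the open segment is not in the image of $\PP(V^\lambda\oplus V^{\lambda-\alpha})$. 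The remaining work is to see that no \emph{other} vectors in $V_\lambda$ map to these points either; this uses that $\mu^{-1}(\mk t^*)$ is controlled by the weight-space structure and that any $[v]$ with $\mu[v]$ on the relative interior of this edge is forced (up to lower-order corrections that cannot help) to live in $V^\lambda\oplus V^{\lambda-\alpha}$, because other weights $\nu\in\Lambda(V_\lambda)$ contribute components of $\mu[v]$ that push $\mu[v]$ strictly inside $\mc C(W\lambda)$, off the edge. Hence $\mu(\PP)$ omits an open subsegment of a segment whose endpoints $\lambda$ and $\lambda-\alpha$ (the latter lying in $W\lambda$ only after scaling — more precisely $s_\alpha\lambda\in W\lambda$, and $\lambda-\alpha$ is between $\lambda$ and $s_\alpha\lambda$) it does contain, which contradicts convexity.

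The main obstacle I anticipate is the last step of the converse: ruling out that vectors mixing in additional weight spaces could ``fill in'' the missing arc. The naive $\mathfrak{sl}_2$ computation only handles $\PP(V^\lambda\oplus V^{\lambda-\alpha})$, but a priori a vector $v=v^\lambda+v^{\lambda-\alpha}+(\text{other weights})$ could have $\mu[v]$ landing back on the forbidden segment. The resolution should come from a careful analysis of which weights $\nu\in\Lambda(V_\lambda)$ can appear: writing $\mu[v]=\sum|c_\nu|^2\nu + (\text{off-diagonal terms from }\mk g^\beta\text{-pairings})$, one needs the off-diagonal terms to vanish or to be incompatible with the segment, and the diagonal part $\sum|c_\nu|^2\nu$ to lie on the edge only when the sum is supported on $\{\lambda,\lambda-\alpha\}$. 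This is where root-distinctness of $\Pi$ (Example above) and a localization argument near the vertex $\lambda$ of the Weyl polytope — essentially, that faces of $\mc C(W\lambda)$ at $\lambda$ are cut out by the simple roots — do the work. I would phrase this via the "linear span corresponds to convex hull" principle: a point on the open edge $E^\lambda_{s_\alpha\lambda}$, being an extreme point of a face, can only be hit by $[v]$ with $v\in\mathrm{Span}$ of weight vectors whose weights lie on that face, which localizes us to the two-dimensional $\mathfrak{sl}_2$ picture where the curved-arc computation applies.
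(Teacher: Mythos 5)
The paper itself gives no proof of this theorem: it is quoted as Wildberger's result (``As a consequence Wildberger proved the following''), with only the ingredient Lemma~\ref{Lemma Wildberger} proved in the text. So there is nothing to compare line-by-line; judged on its own, your reconstruction is essentially correct and is the natural argument given the paper's toolkit. The ``if'' direction is exactly as you say: Lemma~\ref{Lemma Wildberger} gives $\mc C(W\lambda)\subset\mu(\PP)\cap\mk t^*$, the reverse inclusion is the standard bound $\mu(\PP)^{\mk t}\subset\mc C(W\lambda)$, and $K$-equivariance together with $\mc C(K\lambda)\cap\mk t^*=\mc C(W\lambda)$ (Kostant) upgrades this to $\mu(\PP)=K\bigl(\mc C(W\lambda)^+\bigr)=\mc C(K\lambda)$. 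For the converse, your localization-plus-$\mathfrak{sl}_2$ mechanism is the right one, and it is the same fact the paper uses later (that $E^\lambda_{s_\alpha\lambda}\not\subset\mu(\PP)$ for $\alpha\in\Pi_1$), but a few points should be tightened. First, when $\lambda_\alpha=1$ you have $s_\alpha\lambda=\lambda-\alpha$ exactly, so your parenthetical about $\lambda-\alpha$ lying ``between'' $\lambda$ and $s_\alpha\lambda$ is a confusion; the edge is the segment $[\lambda,\lambda-\alpha]$ and its only weights are the endpoints (a point $\lambda-t\alpha$ lies in $\lambda-Q$ only for $t=0,1$). Second, the localization is cleaner than ``extreme point of a face'': if $\mu[v]$ lies on the (open) edge, then $\mu[v]\in\mk t^*$, hence $\mu[v]=\mu_T[v]=\sum_\nu|c_\nu|^2\nu$, and since the edge is a face of $\mc C(W\lambda)\supset\mc C(\Lambda(V))$, every $\nu$ with $c_\nu\ne0$ must lie on that face, i.e. $v\in V^\lambda\oplus V^{\lambda-\alpha}$; this is what rules out ``other weights filling in the arc,'' not the span-versus-hull principle, which in the paper concerns root-distinct sets. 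Third, the two-dimensional computation should record that $\dim V^{\lambda-\alpha}=1$ (it is an extreme weight), that $e_\alpha v^{\lambda-\alpha}$ is a nonzero multiple of $v^\lambda$ because $\lambda(h_\alpha)=1$, and hence that for $v=c_1v^\lambda+c_2v^{\lambda-\alpha}$ with $c_1c_2\ne0$ the component of $\mu[v]$ along the $\alpha$-root directions is nonzero, so $\mu[v]\notin\mk t^*$; thus the open segment is missed entirely, while its endpoints $\lambda,s_\alpha\lambda$ are attained, contradicting convexity. With these repairs your argument is complete and matches what Wildberger's cited proof must accomplish.
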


\begin{coro}
Let $\XX=K[v^\lambda]\subset\PP=\PP(V)$ be the unique complex $K$-orbit. Then $\mu(\PP)$ is convex if and only if $\XX$ does not contain projective lines in $\PP$.
\end{coro}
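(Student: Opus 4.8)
The plan is to deduce the corollary from Theorem \ref{Theo Wildberger} by translating the two equivalent conditions appearing there — convexity of $\mu(\PP)$, and root-distinctness of $W\lambda$ — into the geometric statement about projective lines on $\XX$. Since Theorem \ref{Theo Wildberger} already gives "$\mu(\PP)$ convex $\iff$ $W\lambda$ root-distinct", it suffices to prove "$W\lambda$ root-distinct $\iff$ $\XX$ contains no projective line in $\PP$". By Lemma \ref{Lemma WildbergerWlambdaRD} the left-hand side is equivalent to $\lambda_\alpha\ne1$ for all $\alpha\in\Pi$, so the real content is the equivalence
\[
\lambda_\alpha\ne 1 \ \text{for all}\ \alpha\in\Pi \quad\Longleftrightarrow\quad \XX\ \text{contains no projective line in}\ \PP(V_\lambda).
\]

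First I would recall the structure of lines on the flag variety $\XX=G[v^\lambda]$ under its minimal embedding in $\PP(V_\lambda)$. Since $\XX$ is homogeneous, it contains a projective line through a given point if and only if it contains a line through the point $[v^\lambda]$, and by $B$-homogeneity of a neighbourhood one reduces to lines spanned by weight vectors. The lines through $[v^\lambda]$ on $\XX$ correspond to the edges of the Weyl polytope ${\mc C}(W\lambda)$ at $\lambda$ that are actually length-one segments in the weight lattice: concretely, the edge $E^\lambda_{s_\alpha\lambda}$ (for $\alpha\in\Pi^\lambda$) has edge-generator $E^\lambda_{\lambda-\alpha}$, and the corresponding $T$-stable curve in $\XX$ is a projective line precisely when $s_\alpha\lambda=\lambda-\alpha$, i.e. when $\lambda_\alpha = \langle\lambda,\alpha^\vee\rangle = 1$. (When $\lambda_\alpha\ge 2$ the $T$-curve joining $[v^\lambda]$ and $[v^{s_\alpha\lambda}]$ is a rational curve of degree $\lambda_\alpha$, not a line; when $\lambda_\alpha=0$ there is no such edge at all.) So the existence of a projective line on $\XX$ is exactly the condition $\lambda_\alpha=1$ for some $\alpha\in\Pi$. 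I would also give the "span $\to$ convex hull" version: the line on $\XX$ through $[v^\lambda]$ in the $\alpha$-direction lies in $\PP(V_{\{\lambda, s_\alpha\lambda\}})$, whose momentum image is the segment $[\lambda,s_\alpha\lambda]$ by Lemma \ref{Lemma Wildberger}(b) applied appropriately, and this segment meets the weight $\lambda-\alpha$ in its interior precisely in the non-root-distinct case — giving a second, momentum-theoretic route to the same equivalence that stays closer to Wildberger's method.

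Stringing these together: if $\lambda_\alpha\ne1$ for all $\alpha$, then $W\lambda$ is root-distinct by Lemma \ref{Lemma WildbergerWlambdaRD}, so $\mu(\PP)$ is convex by Theorem \ref{Theo Wildberger}, and by the curve analysis above $\XX$ carries no line; conversely, if some $\lambda_\alpha=1$, then $\XX$ contains the corresponding projective line, and $W\lambda$ fails to be root-distinct, so $\mu(\PP)$ is not convex. I expect the main obstacle to be the clean justification that the $\alpha$-direction $T$-curve on $\XX$ is a line exactly when $\lambda_\alpha=1$ — i.e. identifying the degree of the Schubert curve $X_{s_\alpha}\subset\XX$ under the embedding by $|\lambda|$ with the integer $\langle\lambda,\alpha^\vee\rangle$, and ruling out "accidental" lines not of this $T$-stable form. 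For the latter one uses that any line on a rationally connected homogeneous space through a fixed point is, after applying the $T$-action and taking limits, a $T$-stable line, hence one of the Schubert curves already enumerated; alternatively one invokes that the variety of lines through $[v^\lambda]$ on $\XX$ is itself $P$-homogeneous with isotropy data read off from $\Pi$, its nonemptiness being governed precisely by the marked simple roots $\alpha$ with $\lambda_\alpha=1$. Once this degree computation is in place, the corollary follows formally.
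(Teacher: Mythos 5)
The paper states this corollary without proof, as an immediate consequence of Theorem \ref{Theo Wildberger}, and your reduction is exactly the intended one: by Theorem \ref{Theo Wildberger} and Lemma \ref{Lemma WildbergerWlambdaRD}, everything comes down to the equivalence ``$\lambda_\alpha=1$ for some $\alpha\in\Pi$ $\Longleftrightarrow$ $\XX$ contains a line of $\PP$''. Your proof of that equivalence is essentially correct. The forward direction is clean: if $\lambda_\alpha=1$ then $e_{-\alpha}v^\lambda\neq0$, $e_{-\alpha}^2v^\lambda=0$, so the closure of $\exp(te_{-\alpha})[v^\lambda]$ is the line $\PP({\rm Span}(v^\lambda,e_{-\alpha}v^\lambda))\subset\XX$.

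For the converse, one point needs more care than you give it. After degenerating an arbitrary line through $[v^\lambda]$ to a $T$-stable one (Borel fixed point in the variety of lines through $[v^\lambda]$ contained in $\XX$), the $T$-stable curves of $\XX$ through $[v^\lambda]$ are \emph{not} only those in the edge directions of the Weyl polytope: for every positive root $\beta$ with $(\lambda|\beta)\neq0$ there is a $T$-stable rational curve through $[v^\lambda]$ of degree $n_{\beta,\lambda}=2(\beta|\lambda)/(\beta|\beta)$, and the degeneration may land on one with $\beta$ non-simple. So you must still exclude $n_{\beta,\lambda}=1$ for non-simple $\beta$ when all $\lambda_\alpha\neq1$. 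This is one line: writing $\frac{2\beta}{(\beta|\beta)}=\sum_{\alpha\in\Pi}c_\alpha\frac{2\alpha}{(\alpha|\alpha)}$ with $c_\alpha\in\ZZ_{\geq0}$, one gets $n_{\beta,\lambda}=\sum_\alpha c_\alpha\lambda_\alpha$, which is $0$ or $\geq2$ whenever every $\lambda_\alpha\in\{0\}\cup\ZZ_{\geq2}$; with this inserted (or with the citation to the classification of lines on $G/P$, which you also invoke), the argument closes. Finally, your parenthetical ``span $\to$ convex hull'' aside is dispensable and, as stated, does not work: Lemma \ref{Lemma Wildberger} requires root-distinctness of $\{\lambda,s_\alpha\lambda\}$, which fails exactly when $\lambda_\alpha=1$, and in that case $\lambda-\alpha=s_\alpha\lambda$ is an endpoint, not an interior point, of the segment. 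Dropping that aside, the proof is the natural completion of what the paper leaves implicit.
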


\begin{lemma}\label{Lemma Wild Reducible}
{\rm (Wildberger, \cite{Wildberger})}
Let $V=V_1\oplus V_2$ be a direct sum of two unitary representation of a compact group $K$. Then the momentum image of $\PP(V)$ consists of all segments joining the momentum images of $\PP(V_1)$ and $\PP(V_2)$, i.e.
$$
\mu(\PP(V)) = \bigcup\limits_{(\xi_1,\xi_2)\in\mu(\PP(V_1))\times\mu(\PP(V_2))} [\xi_1,\xi_2] \;.
$$
\end{lemma}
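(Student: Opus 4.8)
The plan is to reduce the statement to a fiberwise computation of the momentum map on $\PP(V)$ using the orthogonal decomposition $V = V_1 \oplus V_2$ and the fact that the momentum map is insensitive to rescaling within a representation. Write a general nonzero vector $v \in V$ as $v = v_1 + v_2$ with $v_i \in V_i$, and set $t_i = \langle v_i, v_i\rangle / \langle v, v\rangle$, so that $t_1, t_2 \geq 0$ and $t_1 + t_2 = 1$. Since the decomposition $V = V_1 \oplus V_2$ is orthogonal and $K$-stable, for $\xi \in \mk k$ we have $\langle \xi v, v\rangle = \langle \xi v_1, v_1\rangle + \langle \xi v_2, v_2\rangle$ (the cross terms $\langle \xi v_1, v_2\rangle$ vanish because $\xi$ preserves each $V_i$ and the summands are orthogonal). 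Dividing through by $\langle v,v\rangle$ and using the definition of $\mu$, one gets directly
$$
\mu[v] = t_1\, \mu[v_1] + t_2\, \mu[v_2]
$$
whenever both $v_1, v_2$ are nonzero, with the obvious degenerate cases $\mu[v] = \mu[v_i]$ when $v_{3-i} = 0$. This is the computational heart of the lemma.

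From this identity the two inclusions follow. For the inclusion $\mu(\PP(V)) \subseteq \bigcup [\xi_1,\xi_2]$: given $[v]$, set $\xi_i = \mu[v_i]$ (when $v_i = 0$, pick $\xi_i$ arbitrarily in the nonempty set $\mu(\PP(V_i))$, or handle that case separately); then $\mu[v] = t_1\xi_1 + t_2\xi_2 \in [\xi_1,\xi_2]$. For the reverse inclusion: given $\xi_i = \mu[w_i]$ with $w_i \in \PP(V_i)$ and a parameter $t \in [0,1]$, choose representative vectors $v_1, v_2$ of $w_1, w_2$ normalized so that $\langle v_1, v_1\rangle = t$ and $\langle v_2, v_2\rangle = 1-t$; then $v = v_1 + v_2$ has $\langle v,v\rangle = 1$ and $\mu[v] = t\xi_1 + (1-t)\xi_2$, which sweeps out all of $[\xi_1,\xi_2]$ as $t$ ranges over $[0,1]$. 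Since $\mu[v_i]$ does not depend on the normalization of $v_i$, this is legitimate; the endpoints $t = 0,1$ recover the degenerate cases and show $\mu(\PP(V_1)) \cup \mu(\PP(V_2)) \subseteq \mu(\PP(V))$ as expected.

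The only genuinely delicate point is bookkeeping around the degenerate locus where one of $v_1, v_2$ vanishes, i.e. the subspaces $\PP(V_1)$ and $\PP(V_2)$ sitting inside $\PP(V)$; there the formula $\mu[v] = t_1\mu[v_1] + t_2\mu[v_2]$ must be read with the convention that a term with coefficient $0$ is simply dropped. One should state explicitly at the outset that $\mu(\PP(V_i)) \neq \emptyset$ for $i = 1,2$ (both $V_i$ nonzero is implicit in calling it a "direct sum of two representations"), so that the arbitrary choices above are available. No convexity theorem or deeper structural input is needed — this is essentially the observation that $\mu$ is an affine function of the "probability weights" $t_i$, which is exactly the phenomenon recorded earlier in the excerpt in the root-distinct case (Lemma \ref{Lemma Wildberger}) now seen at the coarser level of the two-block decomposition. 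I expect writing the cross-term vanishing cleanly to be the main place where care is required, but it is routine.
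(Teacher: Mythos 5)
Your proof is correct: the cross terms vanish because $\xi$ preserves each summand and the Hermitian form on $V=V_1\oplus V_2$ is the orthogonal sum, giving $\mu[v]=t_1\mu[v_1]+t_2\mu[v_2]$, and the two inclusions plus the degenerate cases follow exactly as you say. The paper itself offers no proof of this lemma (it is quoted from Wildberger), and your direct computation is the standard argument, in the same spirit as the weight-space calculation the paper does carry out in the proof of Lemma \ref{Lemma Wildberger}, so there is nothing further to reconcile.
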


\subsection{Unstable representations}\label{Sect 0 notin mu}

A representation is called unstable if the semistable locus is empty, i.e. $\PP=\PP_{us}$. This is equivalent to $\CC[V]^K=\CC$ and to $0\notin\mu(\PP)$. The irreducible unstable representations of semisimple compact groups are well known. They can be found as a particular case in Kac's classification of representations whose invariant ring is isomorphic t a polynomial ring. Since we shall use some properties of unstable representations, we list them below for convenience. We also give some results on reducible unstable representations. These representations form an exceptional class in our approach for calculation of momentum images and critical points of $||\mu||^2$.

The momentum image $\mu(\PP(V))$ for a given unitary representation $V$ of a semisimple compact group $K$ does not contain 0 if and only if the only $K$-invariant polynomials on $V$ are the constants, i.e.
$$
0\notin\mu(\PP(V)) \quad\tst\quad \CC[V]^K=\CC \;.
$$
In \cite{Kac-nilp-orb}, Kac has classified irreducible representations of semisimple groups with polynomial invariant rings. In particular, his tables contain the list of representations, where the only invariant polynomials are the constants. We extract the following.

\begin{prop}\label{Prop IrrepWithoutInvar}
The irreducible representations of semisimple groups, whose momentum image does not contain $0$ are the following, and their duals.
\begin{gather*}
\begin{array}{ll}
SU_m\otimes SU_n \quad&,\; n>m\geq1 \\
SU_n\otimes SO_m \quad&,\; n>m\geq3 \\
SU_n\otimes Sp_{2m} \quad&,\; n>2m\geq4, \quad{\rm or}\quad 1\leq n<2m\;{\rm with}\;n\;{\rm odd} \\
\Lambda^2 SU_{2n+1} \quad&,\; n\geq1 \\
Spin_{10} &\\
SU_2\otimes\Lambda^2 SU_5 &\\
SU_2\otimes\Lambda^2 SU_7 &\\
SU_2\otimes SU_3\otimes SU_5 &
\end{array}
\end{gather*}
In particular, if ${\mk k}$ is simple the list reduces to
\begin{gather*}
\begin{array}{ll}
SU_n \quad&,\; n\geq1 \\
Sp_m \quad&,\; m\geq2 \\
\Lambda^2 SU_{2n+1} \quad&,\; n\geq1 \\
Spin_{10} &
\end{array}
\end{gather*}
\end{prop}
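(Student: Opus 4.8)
The plan is to reduce the statement to Kac's classification of irreducible representations with polynomial invariant ring and to read off the relevant rows of his tables; there is no short self-contained argument, the content being the completeness of that classification.

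First I would recall, for the reader's convenience, why $0\notin\mu(\PP(V))$ is equivalent to $\CC[V]^K=\CC$. Passing to the affine momentum map $\mu_{\mathrm{aff}}(v)(\xi)=\frac1i\langle\xi v,v\rangle$ on $V$, one sees that $0\in\mu(\PP(V))$ iff there is a nonzero $v$ with $\mu_{\mathrm{aff}}(v)=0$, i.e. iff $v$ is a critical point of $\|\cdot\|^2$ on its $G$-orbit, where $G=K^\CC$; by the Kempf--Ness theorem this happens iff $\|\cdot\|^2$ attains its minimum on $Gv$, i.e. iff $Gv$ is closed. Now $V$ has a nonzero closed $G$-orbit iff $\CC[V]^G$ is strictly larger than $\CC$: a nonzero closed orbit and $\{0\}$ are disjoint closed $G$-stable sets, hence separated by some $f\in\CC[V]^G$, which is then nonconstant; conversely a nonconstant homogeneous invariant has a nonempty, $G$-stable, $0$-avoiding level set $\{f=1\}$, any orbit of minimal dimension inside which is closed and nonzero. (Both equivalences were already quoted in the text.) So the task is to classify the irreducible representations $(G,V)$ of connected semisimple $G$ with $\CC[V]^G=\CC$, equivalently with $V$ equal to its own null cone.

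Next I would invoke Kac's classification \cite{Kac-nilp-orb} of the irreducible representations of reductive groups whose ring of invariants is a polynomial algebra; the case at hand, $\CC[V]^G=\CC$, is exactly the sub-list of those with no generators, and it is read off directly. Two reductions streamline the extraction. First, the class is stable under duality, since $V^*\cong\overline V$ as $K$-modules gives $\dim\CC[V]^K_d=\dim\CC[V^*]^K_d$ for every $d$ (the trivial representation is self-conjugate), so one of $\CC[V]^K$, $\CC[V^*]^K$ equals $\CC$ iff the other does; hence it suffices to list one representative of each dual pair, as stated. Second, when $\mk k=\mk k_1\oplus\cdots\oplus\mk k_r$ is semisimple but not simple, irreducibility forces $V=V_1\boxtimes\cdots\boxtimes V_r$ with each $V_i$ a nontrivial irreducible module of the simple factor $\mk k_i$ (a trivial factor may be discarded without changing $\CC[V]^K$); the reducible-group rows of Kac's tables then give precisely the listed external tensor products. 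Specializing to $r=1$, i.e. discarding the rows with two or more tensor factors, yields the stated shorter list for $\mk k$ simple.

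The main obstacle is not conceptual but one of bookkeeping: one must transcribe Kac's normalizations into the present conventions — which classical group (note $Sp_2=SL_2$, $SU_n$ versus $SL_n$, which fundamental weight is meant by $\Lambda^2SU_{2n+1}$, how the duals are indexed) — and verify the small and borderline cases, in particular the parity and inequality constraints in the rows $SU_n\otimes SU_m$, $SU_n\otimes SO_m$ and $SU_n\otimes Sp_{2m}$, where whether $0\in\mu(\PP)$ can flip with the parity of a single index. As an optional cross-check one could, for several of these entries, verify directly by the Weyl-group and weight methods recalled in Section~\ref{Sect Weyl polytope}, using $\mu(\PP)\cap\mk t^*\subset\mc C(W\lambda)$, that $0$ lies outside the momentum image; but no uniform such proof is attempted here.
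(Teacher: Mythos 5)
Your proposal is correct and follows essentially the same route as the paper: the paper likewise reduces the statement to the standard equivalence $0\notin\mu(\PP(V))\Longleftrightarrow\CC[V]^K=\CC$ and then simply extracts the list from Kac's tables in \cite{Kac-nilp-orb} of irreducible representations with polynomial (here, trivial) invariant ring. Your added Kempf--Ness justification and the duality/tensor-factor bookkeeping are consistent with, and slightly more explicit than, what the paper records.
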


\begin{rem}\label{Rem OnotinmuP}
We note for further use one common property of the representations from the above list. If $\lambda$ is the highest weight of an irreducible representation whose momentum image does not contain 0, then $\lambda$ is a sum of 1,2 or 3 fundamental weights, belonging to different simple components of the group $K$. We have $\Pi^\lambda=\Pi_1$ and the roots in $\Pi_1$ are short, whenever different root lengths occur. Thus
$$
0\in\mu(\PP) \quad\Lw\quad n_{\gamma,\alpha}\in\{0,-1\} \quad\textrm{for all $\alpha\in\Pi_1$ and $\gamma_\in\Pi_0$.}
$$
\end{rem}

As a consequence of Lemma \ref{Lemma Wild Reducible} we obtain the following.

\begin{lemma}
Let $V$ be a unitary representation of a semisimple compact group $K$. Let $V=\oplus m_\lambda V_\lambda$ be the decomposition of $V$ into isotypic components. If $0\notin\mu(\PP(V))$, then for each $\lambda$ with $m_\lambda\ne 0$ we have
$$
0\notin\mu(\PP(V_\lambda)) \;.
$$
Furthermore, if $V_\lambda$ is self-dual, we have $m_\lambda=1$. If $V_\lambda$ is not self-dual, we have
$$
m_\lambda< \#(W\lambda) \quad and\quad m_{\lambda^*} = 0 \;,
$$
where $\lambda^*$ denotes the highest weight of the dual representation $V_\lambda^*$ (we have $\lambda^*=-w_0\lambda$).
\end{lemma}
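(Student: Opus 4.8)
The plan is to derive everything from Lemma~\ref{Lemma Wild Reducible}, together with the two facts recalled above: $0\in\mu(\PP(U))$ if and only if $\CC[U]^K\ne\CC$, and $W\nu\subset\mu(\PP(V_\nu))$ for every dominant $\nu$ (Lemma~\ref{Lemma Wildberger}). I first record a preliminary remark: for a $K$-submodule $U\subset V$ one has $\mu(\PP(U))\subset\mu(\PP(V))$ --- indeed the momentum map formula for $\PP(V)$ restricted to $\PP(U)\subset\PP(V)$ is literally the momentum map of $\PP(U)$, or, writing $V=U\oplus U'$, Lemma~\ref{Lemma Wild Reducible} gives $\mu(\PP(V))=\bigcup_{(\eta,\xi)}[\eta,\xi]\supset\mu(\PP(U))$. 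Taking $U$ to be one copy of $V_\lambda$ inside $m_\lambda V_\lambda$ yields the first assertion at once: $0\notin\mu(\PP(V))$ forces $0\notin\mu(\PP(V_\lambda))$.

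For the multiplicity statements the mechanism is that iterating Lemma~\ref{Lemma Wild Reducible} expresses $\mu(\PP(mV_\lambda))$ as an iterated join of copies of $\mu(\PP(V_\lambda))$, and a routine induction on $m$ --- using that the convex hull of a convex set with a point is the union of all segments from the set to the point --- gives
$$
\mu(\PP(mV_\lambda))\ \supset\ \mc C\{\xi_1,\dots,\xi_m\}\qquad\text{for any }\ \xi_1,\dots,\xi_m\in\mu(\PP(V_\lambda)).
$$
If $V_\lambda$ is self-dual, then $w_0\lambda=-\lambda$, so both $\lambda$ and $-\lambda$ lie in $W\lambda\subset\mu(\PP(V_\lambda))$; taking $m=2$ above, $[\lambda,-\lambda]\subset\mu(\PP(2V_\lambda))$, and this segment contains $0$. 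Hence $m_\lambda\ge2$ would put $0$ in $\mu(\PP(2V_\lambda))\subset\mu(\PP(V))$, contradicting $0\notin\mu(\PP(V))$; together with $m_\lambda\ne0$ this forces $m_\lambda=1$.

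Now suppose $V_\lambda$ is not self-dual, so $\lambda^*=-w_0\lambda\ne\lambda$, while $-\lambda^*=w_0\lambda\in W\lambda\subset\mu(\PP(V_\lambda))$ and $\lambda^*\in\mu(\PP(V_{\lambda^*}))$. If $m_{\lambda^*}\ne0$ as well, then $V_\lambda\oplus V_{\lambda^*}\subset V$, and Lemma~\ref{Lemma Wild Reducible} gives $[-\lambda^*,\lambda^*]\subset\mu(\PP(V_\lambda\oplus V_{\lambda^*}))\subset\mu(\PP(V))$, again containing $0$ --- contradiction; hence $m_{\lambda^*}=0$. Finally, put $N=\#(W\lambda)$. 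The vector $\sum_{\nu\in W\lambda}\nu$ is $W$-invariant, hence zero since $\mk k$ is semisimple, so $0=\frac1N\sum_{\nu\in W\lambda}\nu\in\mc C(W\lambda)$; as the $N$-point set $W\lambda$ has affine span of dimension $\le N-1$, Carath\'eodory's theorem yields $F\subset W\lambda$ with $\#F\le N$ and $0\in\mc C(F)$. If $m_\lambda\ge N$, choose $\xi_1,\dots,\xi_{\#F}$ to be the elements of $F$ and $\xi_{\#F+1}=\dots=\xi_{m_\lambda}=\lambda$; then $0\in\mc C(F)\subset\mc C\{\xi_1,\dots,\xi_{m_\lambda}\}\subset\mu(\PP(m_\lambda V_\lambda))\subset\mu(\PP(V))$, a contradiction. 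Therefore $m_\lambda\le N-1<\#(W\lambda)$.

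The only step that is not purely formal is this last one: one must combine the iterated-join description of $\mu(\PP(mV_\lambda))$ with the Carath\'eodory bound on how many vertices of $W\lambda$ are needed to enclose the origin, and it is exactly the inequality $\dim{\rm Aff}(W\lambda)\le\#(W\lambda)-1$ that makes the threshold come out as $\#(W\lambda)$ (rather than the weaker bound one gets from Carath\'eodory in the whole Cartan subalgebra). Everything else --- the inductive join identity and the behaviour of $\mu$ under restriction to a submodule --- is routine.
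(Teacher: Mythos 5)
Your proof is correct and follows exactly the route the paper indicates: the lemma is stated there as a consequence of Wildberger's Lemma \ref{Lemma Wild Reducible} (no written proof is given), and your iterated-join argument, together with $W\lambda\subset\mu(\PP(V_\lambda))$ and the orthogonality/restriction remarks, fills in the details correctly. One small observation: the Carath\'eodory step is superfluous, since with $N=\#(W\lambda)$ you may simply take $F=W\lambda$ itself (as $0=\frac1N\sum_{\nu\in W\lambda}\nu\in\mc C(W\lambda)$), which already yields the contradiction when $m_\lambda\ge N$.
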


For representations without multiplicities we have the following.

\begin{prop}\label{Prop UnstableMultFree}
The only reducible multiplicity-free representations $V$ of a semisimple compact group $K$ for which $0 \notin \mu(\PP(V))$ are the $SU_{2n+1}$-representation on $(\CC^{2n+1})^* \oplus \Lambda^2\CC^{2n+1}$ and its dual.
\end{prop}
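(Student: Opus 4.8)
The plan is to reduce the classification to the two irreducible-module lists of Proposition~\ref{Prop IrrepWithoutInvar} combined with Lemma~\ref{Lemma Wild Reducible} and the divisibility/self-duality constraints of the preceding lemma. Write $V=\bigoplus_{\lambda\in S}V_\lambda$ with $S$ a set of pairwise distinct dominant weights (multiplicity-free) and $\#S\geq 2$. By the preceding lemma, each summand must individually satisfy $0\notin\mu(\PP(V_\lambda))$, so every $\lambda\in S$ appears in the list of Proposition~\ref{Prop IrrepWithoutInvar}, and no two of them are dual to each other (since $m_{\lambda^*}=0$ whenever $V_\lambda$ is not self-dual, while $\lambda=\lambda^*$ forces $m_\lambda=1$, which is automatic here). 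In particular the self-dual entries $Spin_{10}$ and $Sp_{2m}$ may occur, but a non-self-dual entry and its dual cannot both occur.

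The main step is to show that adding a second summand is almost always obstructed, i.e. $0\in\mu(\PP(V_{\lambda_1}\oplus V_{\lambda_2}))$ for nearly all pairs. By Lemma~\ref{Lemma Wild Reducible}, $\mu(\PP(V_1\oplus V_2))$ is the union of all segments $[\xi_1,\xi_2]$ with $\xi_i\in\mu(\PP(V_i))$; hence $0$ lies in this union if and only if there exist $\xi_1\in\mu(\PP(V_1))$ and $\xi_2\in\mu(\PP(V_2))$ with $0$ on the segment $[\xi_1,\xi_2]$, equivalently $\xi_2=-t\xi_1$ for some $t\geq 0$ with both $\xi_1\in\mu(\PP(V_1))$ and $-t\xi_1\in\mu(\PP(V_2))$. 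So the obstruction is: $0\notin\mu(\PP(V_1\oplus V_2))$ if and only if $\mu(\PP(V_1))$ and $-\mu(\PP(V_2))$ are not ``radially linked'', i.e. no ray from the origin meets both. Using $\mu(\PP(V_\lambda))\subset\mc C(K\lambda)$ together with Remark~\ref{Rem OnotinmuP} (each $\lambda$ here is a sum of at most three fundamental weights lying in distinct simple factors, with the relevant roots short), one checks that this radial-linking fails only in the stated $SU_{2n+1}$ case. Concretely: since $0\notin\mu(\PP(V_\lambda))$ already separates $0$ from $\mc C(K\lambda)$ by a hyperplane through $0$ (a weight of $\mk k^*$), one analyses for each candidate $\lambda$ the open half-space $H_\lambda$ with $\mu(\PP(V_\lambda))\subset H_\lambda$; then $0\notin\mu(\PP(V_1\oplus V_2))$ forces $-H_{\lambda_2}^\circ\cap H_{\lambda_1}^\circ$ to contain no momentum point, which pins down $\lambda_1,\lambda_2$ to be "opposite" in a precise sense. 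Running through the simple-group list ($SU_n$, $Sp_m$, $\Lambda^2SU_{2n+1}$, $Spin_{10}$) and the product entries, the only compatible unordered pair turns out to be $\{(\CC^{2n+1})^*,\Lambda^2\CC^{2n+1}\}$ for $K=SU_{2n+1}$ (the natural module and the $\Lambda^2$, arranged so their momentum cones sit on opposite sides of a common facet), together with its dual $\{\CC^{2n+1},\Lambda^2(\CC^{2n+1})^*\}$.

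Finally I would rule out $\#S\geq 3$ and rule out any extra summand on top of the found pair: if $V=V_{\lambda_1}\oplus V_{\lambda_2}\oplus V_{\lambda_3}$ then in particular $V_{\lambda_1}\oplus V_{\lambda_2}$ is a multiplicity-free reducible unstable representation, hence (by the $\#S=2$ case) equals the $SU_{2n+1}$ pair up to duality; but then $\lambda_3$ would have to be a further weight of $SU_{2n+1}$ in the list of Proposition~\ref{Prop IrrepWithoutInvar} compatible with both $\lambda_1$ and $\lambda_2$ in the radial-linking sense, and one checks directly that no such $\lambda_3$ exists (the only candidates are $\CC^{2n+1}$, its dual, $\Lambda^2\CC^{2n+1}$, its dual, and these either coincide with $\lambda_1,\lambda_2$ or create a dual pair, contradicting $m_{\lambda^*}=0$). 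Hence $\#S=2$ and the representation is exactly the one claimed.

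I expect the genuinely delicate point to be the radial-linking analysis for pairs where both summands are ``large'' (e.g. $\Lambda^2SU_{2n+1}$ together with $SU_{2n+1}^{\otimes}$-type modules, or the triple-tensor entries $SU_2\otimes SU_3\otimes SU_5$): there one cannot argue purely from a single separating hyperplane and must use the finer structure of $\mu(\PP(V_\lambda))$ — in the worst cases computing $\mu(\PP(V_\lambda))^+$ via Wildberger's lemma on root-distinct sets (Lemma~\ref{Lemma Wildberger}) applied to the weight set of $V_\lambda$ — to verify that $0$ does land on some connecting segment. Everything else is bookkeeping against the Kac list.
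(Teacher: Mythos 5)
Your overall skeleton matches the paper's: use the lemma following Lemma \ref{Lemma Wild Reducible} to force every summand to come from the list in Proposition \ref{Prop IrrepWithoutInvar} (with no dual pair occurring), reduce to a finite list of candidate pairs over a common group, settle each pair, and dispose of three or more summands by restriction to sub-sums. However, the mechanism you propose for settling the pairs is broken. You claim that $0\notin\mu(\PP(V_\lambda))$ ``already separates $0$ from $\mc C(K\lambda)$ by a hyperplane through $0$'' and then run a half-space (``radial-linking'') analysis. For $K$ semisimple this is false: $\sum_{w\in W}w\lambda=0$, so $0\in\mc C(W\lambda)\subset\mc C(K\lambda)$ always; moreover no nonempty $K$-invariant subset of $\mk k^*$ can lie in an open half-space bounded by a hyperplane through the origin (average any point over $K$: the barycenter of a $K$-orbit is a fixed vector, hence $0$). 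Since $\mu(\PP(V_\lambda))$ is $K$-invariant, the half-spaces $H_\lambda$ you want do not exist, and the step that is supposed to ``pin down'' the admissible pairs collapses. What actually works, and is what the paper does, is the elementary positive direction of Wildberger's Lemma \ref{Lemma Wildberger}: for each candidate pair other than the claimed one (namely $\CC^{2n+1}\oplus\Lambda^2\CC^{2n+1}$ and the mixed $SU_2\times SU_5$ and $SU_2\times SU_7$ pairs), one exhibits $0$ explicitly as a convex combination of pairwise root-distinct weights of $V$, which places $0$ in $\mu(\PP(V))$.

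The second, and more serious, gap is that you never prove the exceptional pair is in fact unstable: that $0\notin\mu\bigl(\PP((\CC^{2n+1})^*\oplus\Lambda^2\CC^{2n+1})\bigr)$ is asserted (``turns out to be'', ``arranged so their momentum cones sit on opposite sides of a common facet'') rather than established. This is the one case where no segment construction can help and a genuine nonexistence argument is required: one must show that no $\xi_1\in\mu(\PP((\CC^{2n+1})^*))$ and $\xi_2\in\mu(\PP(\Lambda^2\CC^{2n+1}))$ are negatively proportional, equivalently $\CC[V]^{SL_{2n+1}}=\CC$. The paper obtains this by observing that $SL_{2n+1}$ acts spherically on $\PP(V)$ and quoting Knop's classification of multiplicity-free spaces, which records the absence of nonconstant invariants; your proposal contains no substitute for this input, so the classification is incomplete precisely at the representation the proposition is about. (Your reduction of $\#S\geq 3$ to the two-summand case is fine once the two-summand case is genuinely settled.)
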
 

The proof of this proposition is a somewhat lengthy calculation, which we have placed in the appendix.

\section{Momentum polytopes}

Let $\lambda\in\Lambda^+$ and $\PP=\PP(V_\lambda)$. We assume that the representation of ${\mk k}$ on $V_\lambda$ is faithful. As before, for $\alpha\in\Pi$ we denote by $\omega_\alpha$ the corresponding fundamental weight and write
$$
\lambda = \sum\limits_{\alpha\in\Pi} \lambda_\alpha\omega_\alpha \;.
$$
The goal of this section is to study the structure of the momentum polytope $\mu(\PP)^+=\mu(\PP)\cap{\mk t}^+$ and more generally $\mu(\PP)^{\mk t}$, concentrating at the vertex $\lambda$. We know that $W\lambda\subset\mu(\PP)^{\mk t}\subset{\mc C}(W\lambda)$. The two extreme cases do occur. We have $W\lambda=\mu(\PP)\cap{\mk t}^*$ if and only if $(K,V)$ is the natural representation of either $SU(n)$ or $Sp(n)$. The other extreme case is characterized by Wildberger's theorem. Recall that $\lambda$ is $\alpha$-distinct if and only if $\lambda_\alpha=1$ and $\lambda$ is orthogonal to $\alpha$ if and only if $\lambda_\alpha=0$. We consider a partition of the set of simple roots, refining (\ref{For Pi=Pilambda u Pi0}) according to the root-distinctness properties of $\lambda$:
\begin{gather}\label{For Part Pi lambda}
\begin{array}{lll}
\Pi &= \Pi_{\rm rd} \sqcup \Pi_1 \sqcup \Pi_0^\lambda \;,\;  & \Pi^\lambda=\Pi_{\rm rd}\sqcup\Pi_1 \\
\hline
\Pi_{\rm rd} &= \{\alpha\in\Pi: \lambda_\alpha\geq 2\} & \quad \textrm{root-distinct} \\
\Pi_{1} &= \{\alpha\in\Pi: \lambda_\alpha=1\} & \quad \textrm{non-root-distinct} \\
\Pi_{0} &= \{\alpha\in\Pi: \lambda_\alpha=0\} & \quad \textrm{orthogonal} \\
\end{array}
\end{gather}

\begin{rem}\label{Rem AboutPi0Pi1}
We have $\lambda\in\Lambda^{++}$ if and only if $\Pi_0=\emptyset$. Wildberger's theorem states, that $\mu(\PP)^{\mk t}={\mc C}(W\lambda)$ if and only if $W\lambda$ is root-distinct, i.e. $\Pi_1=\emptyset$.
\end{rem}

\subsection{The case of a regular highest weight}\label{The case of a regular highest weight}

We present here the case of a regular dominant weight. This case is contained in the general result which follow, but the combinatorics in this case are simpler, so we find it instructive to consider this case separately. We assume for technical reasons that $K$ has rank at least 3. The cases of rank 1 and 2 fit into the general construction, but require more involved statements.

Consider a regular dominant weight $\lambda\in\Lambda^{++}$. We have $\Pi^\lambda=\Pi$ and $W_\lambda=\{1\}$. Thus
$$
{\mc C}(W\lambda)^+ = \left( \lambda-{\rm Cone}(\Pi) \right)^+ = S_0 \cup S_\lambda \cup \left( \bigcup\limits_{\alpha\in\Pi} S_{\alpha,\lambda} \right) \;.
$$
Recall also that ${\mc H}_\alpha$ denotes the hyperplane in ${\mk t}^*$ orthogonal to $\alpha\in\Delta$.

\begin{theorem}\label{Theo MomentImLambdaReg}
Let $\lambda\in\Lambda^{++}$ and let $\Pi=\Pi_{\rm rd}\cup\Pi_1$ be the partition of the simple roots associated to $\lambda$, given in (\ref{For Part Pi lambda}). Denote
$$
\Theta^\lambda = \Pi_{\rm rd} \cup \left( \bigcup\limits_{\alpha\in\Pi_1} \alpha + s_{\alpha}(\Pi\setminus\{\alpha\}) \right) \quad,\quad \Upsilon_0={\rm Cone}(\Theta^\lambda) \quad,\quad \Upsilon_\lambda=\lambda-\Upsilon_0\;.
$$
Then the momentum polytope of $V_\lambda$ is obtained as the intersection of two cones as
$$
\mu(\PP)^+ = {\mk t}^+ \cap \Upsilon_\lambda \;.
$$
The vertices are obtained as follows. The minimal integral segments generating $\Upsilon_\lambda$ are
\begin{gather}\label{For EdgesAtLambdaReg}
E_{s_\alpha\lambda}^\lambda \;\;\textit{for}\;\; \alpha\in\Pi_{\rm rd}^\lambda \quad,\quad E_{s_\beta(\lambda-\gamma)}^{\lambda} \;\;\textit{for}\;\; \beta\in\Pi_1^\lambda \;,\; \gamma\in\Pi\setminus\{\beta\}\;.
\end{gather}
Denote $\eta_\alpha=E^\lambda_{s_\alpha\lambda}\cap\mc H_\alpha$, $\eta_{\beta}^\gamma=E_{s_\beta(\lambda-\gamma)}^{\lambda}\cap\mc H_\beta$. Then the momentum polytope of the projective representation $\PP(V_\lambda)$ is given by
$$
\mu(\PP(V_\lambda))^+={\mc C}\{ 0,\lambda,\eta_\alpha,\eta_\beta^\gamma,\xi_\delta \;;\; \alpha\in\Pi_{\rm rd}, \beta\in\Pi_1, \gamma\in\Pi\setminus\{\beta\}, \delta\in\Pi \} \;.
$$
\end{theorem}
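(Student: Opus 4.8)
The plan is to prove the two inclusions $\Upsilon_\lambda\cap\mk t^+\subseteq\mu(\PP)^+$ and $\mu(\PP)^+\subseteq\Upsilon_\lambda\cap\mk t^+$ separately, and then to read off the edge-generators and the vertex list from the polyhedron $\Upsilon_\lambda\cap\mk t^+$ once it has been identified with $\mu(\PP)^+$. A preliminary remark: since $\lambda$ is regular, $V_\lambda$ is not one of the unstable representations of Proposition \ref{Prop IrrepWithoutInvar} (all of which have singular highest weight), so $0\in\mu(\PP)$ and $0$ is a genuine candidate vertex.

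For the inclusion $\Upsilon_\lambda\cap\mk t^+\subseteq\mu(\PP)^+$ I would first construct a simplicial subdivision of $\Upsilon_\lambda\cap\mk t^+$ refining the subdivision of $\mc C(W\lambda)^+$ of Proposition \ref{Prop TriangCWlambda}: retain the simplices $S_0=\mc C\{0,\xi_\delta\}$, $S_\lambda=\mc C\{\lambda,\xi_\delta\}$ and $S_{\alpha,\lambda}=\mc C\{\lambda,\eta_\alpha,\xi_\delta:\delta\ne\alpha\}$ for $\alpha\in\Pi_{\rm rd}$, and replace each wedge $S_{\beta,\lambda}$, $\beta\in\Pi_1$, by the pieces obtained after cutting it along the facet of $\Upsilon_\lambda$ through $\lambda$ and the points $\eta_\beta^\gamma$, $\gamma\in\Pi\setminus\{\beta\}$. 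For each simplex $S$ of this subdivision I would exhibit a \emph{root-distinct} set $M_S\subseteq\Lambda(V_\lambda)$ of actual weights with $S\subseteq\mc C(M_S)$, and then conclude with Lemma \ref{Lemma Wildberger}: $S\subseteq\mc C(M_S)=\mu(\PP(V_{M_S}))\subseteq\mu(\PP)$, whence $S\subseteq\mu(\PP)^+$ since $S\subseteq\mk t^+$. The three kinds of input are: (i) the points $\xi_\delta$ lie in $\mu(\PP)$, being contained in convex hulls of Levi orbits $W_{\Pi\setminus\{\delta\}}\lambda$, which (after a further subdivision handling the $\Pi_1$-directions, by Lemma \ref{Lemma WildbergerWlambdaRD} applied to the Levi) are root-distinct; (ii) for $\alpha\in\Pi_{\rm rd}$ the whole edge $E^\lambda_{s_\alpha\lambda}$, hence $\eta_\alpha$, lies in $\mu(\PP)$, which I would obtain not from Lemma \ref{Lemma Wildberger} but from the rational curve $SU(2)_\alpha[v^\lambda]\subset\PP$: since $\lambda_\alpha\ge2$ the $\alpha$-string of $v^\lambda$ has length $\ge3$, the $K$-moment map restricted to this curve is constant in the directions transverse to $\mathfrak{su}(2)_\alpha$ and reduces there to the $SU(2)_\alpha$-moment map, which covers the full segment $[\lambda,s_\alpha\lambda]$ (rank one); (iii) for $\beta\in\Pi_1$ and $\gamma\in\Pi\setminus\{\beta\}$ the pair $\{\lambda,s_\beta(\lambda-\gamma)\}$ is root-distinct — one checks $\lambda-s_\beta(\lambda-\gamma)=\beta+s_\beta\gamma$ is never a root — so $[\lambda,s_\beta(\lambda-\gamma)]\subset\mu(\PP)$ and $\eta_\beta^\gamma=[\lambda,s_\beta(\lambda-\gamma)]\cap\mc H_\beta\in\mu(\PP)$, and the simplices filling the dented $\beta$-wedge come from root-distinct sets built out of $\{\lambda\}$, the weights $s_\beta(\lambda-\gamma)$, and Levi orbits. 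Verifying that these unions are root-distinct and that the chosen simplices really subdivide $\Upsilon_\lambda\cap\mk t^+$ is the bulk of the bookkeeping in this direction.

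For the reverse inclusion $\mu(\PP)^+\subseteq\Upsilon_\lambda\cap\mk t^+$, note first that $\mu(\PP)^+\subseteq\mc C(W\lambda)^+=\bigl(\lambda-{\rm Cone}(\Pi)\bigr)^+$ always. A purely combinatorial lemma then identifies ${\rm Cone}(\Theta^\lambda)$ with ${\rm Cone}(\Pi)\cap\bigcap_{\beta\in\Pi_1}\{u:(u\,|\,s_\beta\chi_0)\ge0\}$ — using $s_\beta\chi_0\perp(\beta+s_\beta\gamma)$ for all $\gamma\ne\beta$ and $(s_\beta\delta\,|\,\chi_0)>0$ for simple $\delta\ne\beta$ — so it remains to prove, for every $[v]$ with $\mu[v]$ dominant and every $\beta\in\Pi_1$, the inequality $(\lambda-\mu[v]\,|\,s_\beta\chi_0)\ge0$. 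Writing $v=\sum_\nu c_\nu v^\nu$ and using $(\lambda-\nu\,|\,s_\beta\chi_0)=\|\chi_0\|^2\bigl({\rm ht}(\lambda-\nu)+(\nu\,|\,\beta^\vee)-1\bigr)$, this unwinds to a statement purely about the numbers $|c_\nu|^2$. The crucial point — and this is the main obstacle — is that the chamber constraint $\mu[v]\in\mk t^+$ cannot be dropped: the inequality already fails at the dominant point $\lambda-\tfrac12\beta$, which lies in $\mc C(W\lambda)$ but not in $\mu(\PP)$, and indeed the supremum of $(\mu[\cdot]\,|\,s_\beta\chi_0)$ over all of $\PP$ equals $(\lambda\,|\,\chi_0)>(\lambda\,|\,s_\beta\chi_0)$, attained at $[v^{s_\beta\lambda}]\notin\mu^{-1}(\mk t^+)$. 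So the condition $\mu[v]\in\mk t^*$ — equivalently $\langle e_\delta v,v\rangle=0$ for all roots $\delta$ — must enter in an essential, representation-theoretic way. I would carry this out following the mechanism of Wildberger's convexity dichotomy (Theorem \ref{Theo Wildberger}): decompose $V_\lambda$ under the commuting pair consisting of $SU(2)_\beta$ and the centralizing subtorus, observe that the string of $v^\lambda$ is the two-dimensional $SU(2)_\beta$-module because $\lambda_\beta=1$, and show that the vanishing of the off-diagonal $SU(2)_\beta$-moment components, together with the convexity of the momentum image in the centralizer directions, forces the bound; alternatively this inclusion may simply be quoted from the general upper bound established in Section \ref{A cone containing the image}.

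Finally, once the equality $\mu(\PP)^+=\Upsilon_\lambda\cap\mk t^+$ is in hand, the assertions about edge-generators and vertices are routine polyhedral combinatorics applied to $\Upsilon_\lambda=\lambda-{\rm Cone}(\Theta^\lambda)$ intersected with the Weyl chamber: the facets of $\Upsilon_\lambda$ at $\lambda$ are spanned by the segments $E^\lambda_{s_\alpha\lambda}$ ($\alpha\in\Pi_{\rm rd}$, inherited from $\mc C(W\lambda)$) and by the segments $E^\lambda_{s_\beta(\lambda-\gamma)}$ ($\beta\in\Pi_1$, $\gamma\in\Pi\setminus\{\beta\}$, the new edges cut into the $\beta$-wedge), with minimal integral generators $\lambda-\alpha$ and $\lambda-(\beta+s_\beta\gamma)$ respectively; tracking where these edges meet the walls $\mc H_\bullet$ of $\mk t^+$ and where $\Upsilon_\lambda$ meets the fundamental rays produces exactly the vertex list $\{0,\lambda,\eta_\alpha,\eta_\beta^\gamma,\xi_\delta\}$.
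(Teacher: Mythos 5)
Your overall strategy is essentially the paper's: use the simplicial subdivision of $\mc C(W\lambda)^+$ from Proposition \ref{Prop TriangCWlambda}, fill $S_0$, $S_\lambda$ and the $\Pi_{\rm rd}$-wedges, treat each $\Pi_1$-wedge via the root-distinct set $s_\beta(\lambda-\Pi)$ (your pairs $\{\lambda,s_\beta(\lambda-\gamma)\}$, with $\lambda-s_\beta(\lambda-\gamma)=s_\beta(\gamma-\beta)\notin\Delta$, are exactly its edges at $\lambda$), and exclude the region near $s_\beta\lambda$; invoking Theorem \ref{Theo muPPinUpsilonlambda} for the inclusion $\mu(\PP)^+\subset\Upsilon_\lambda$ is legitimate and non-circular, granted your combinatorial identification of ${\rm Cone}(\Theta^\lambda)$ with the inequality description. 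However, two of your three inputs for the inclusion $\Upsilon_\lambda\cap\mk t^+\subset\mu(\PP)^+$ are not correctly justified. First, your argument for $\xi_\delta\in\mu(\PP)$ does not work: Lemma \ref{Lemma WildbergerWlambdaRD} applied to the Levi with simple roots $\Pi\setminus\{\delta\}$ requires $\lambda_\alpha\neq 1$ for all $\alpha$ in the Levi, which fails exactly when $\Pi_1\cap(\Pi\setminus\{\delta\})\neq\emptyset$ — the interesting case — and membership of $\xi_\delta$ in the convex hull of a non-root-distinct Levi orbit proves nothing, since non-convexity of the image is the whole issue; the vague ``further subdivision'' does not supply a root-distinct set whose hull contains $\xi_\delta$. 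The paper's argument is different: assuming ${\rm rank}\,\mk k\geq 3$ (a hypothesis of this section that you drop), the semisimple part of the Levi $L$ has rank $\geq 2$ and acts on the facet module $V_F$ with regular highest weight, hence $0$ lies in its momentum image (by the unstable-representation classification, cf.\ Proposition \ref{Prop IrrepWithoutInvar}); since $\xi_\delta$ is the central value for $L$ and the off-Levi components of $\mu$ vanish on $\PP(V_F)$, one gets $\xi_\delta\in\mu(\PP)$.

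Second, your mechanism (ii) for $\alpha\in\Pi_{\rm rd}$ is flawed: by equivariance, $\mu\bigl(SU(2)_\alpha[v^\lambda]\bigr)={\rm Ad}^*\bigl(SU(2)_\alpha\bigr)\lambda$ is a coadjoint two-sphere which meets $\mk t^*$ only at the two poles $\lambda$ and $s_\alpha\lambda$; the interior points of the segment $[\lambda,s_\alpha\lambda]$, in particular $\eta_\alpha$, arise only as projections to $\mk t^*$, not as momentum values, so this curve does not place $\eta_\alpha$ in $\mu(\PP)\cap\mk t^+$. The correct (and immediate) argument is the one you yourself use in (iii): for $\alpha\in\Pi_{\rm rd}$ the pair $\{\lambda,s_\alpha\lambda\}$ is root-distinct because $\lambda-s_\alpha\lambda=\lambda_\alpha\alpha$ with $\lambda_\alpha\geq 2$, so Lemma \ref{Lemma Wildberger} gives $[\lambda,s_\alpha\lambda]\subset\mu(\PP)$ and hence $\eta_\alpha\in\mu(\PP)^+$, after which convexity of the momentum polytope fills the wedge $S_{\alpha,\lambda}$. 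With these two repairs (and the rank restriction stated), your outline matches the paper's proof; the remaining bookkeeping and the final polyhedral identification of edges and vertices are as in the paper.
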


\begin{proof}
First notice that the hypothesis implies $0\in\mu(\PP)$. Recall that we have assumed $\ell={\rm rank}(G)\geq 3$. Let $\alpha\in\Pi$ and let $F$ be the facet of ${\mc C}(W\lambda)$ at $\lambda$ orthogonal to $\xi_\alpha$. Also, let $L$ be the maximal Levi subgroup with simple roots $\Pi\setminus\{\alpha\}$. Let $V_F\subset V$ be the span of the weight spaces in $V$ whose weights belong to $F$; this is an $L$-submodule of $V$ whose highest weight is the restriction of $\lambda$ and whose weight polytope is $F$. Note that this restricted $\lambda$ is a regular dominant weight for $L$. The momentum images of regular representations of semisimple groups of rank at least 2 contain 0. Since ${\rm rank}(L)\geq 2$, we have $\xi_\alpha\in\mu_L(\PP(V_F))$, because $\xi_\alpha$ is in the center of ${\mk l}$ and plays the role of $0$ for the momentum map of the reductive group $L$. Thus
$$
\{\xi_\alpha ;\; \alpha\in\Pi\} \subset \mu(\PP) \;.
$$

Since $0,\lambda$ and $\xi_\alpha$, for all $\alpha$, belong to the momentum image $\mu(\PP)$, we have
$$
S_0 \cup S_\lambda \subset \mu(\PP) \;.
$$

For $\alpha\in\Pi$, we have $S_{\alpha,\lambda}\subset\mu(\PP)$ if and only if $\alpha\in\Pi_{\rm rd}$.

It remains to determine $S_{\beta,\lambda}\cap\mu(\PP)$ for $\beta\in\Pi_1$, i.e. $\lambda-s_\beta\lambda=\beta$. In this case, the edges from our list (\ref{For EdgesAtLambdaReg}) which intersect $S_{\beta,\lambda}$ are $E_{s_\beta(\lambda-\gamma)}^{\lambda}$ for $\gamma\in\Pi\setminus\{\beta\}$. The set of vertices of these edges is $s_\beta(\lambda-\Pi)$ with $\lambda=s_\alpha(\lambda-\alpha)$ being the common vertex and the only vertex in ${\mk t}^+$; this set is root-distinct, since $\Pi$ is root-distinct. By Wildberger's lemma \ref{Lemma Wildberger}, the convex hull of belongs to the momentum image, i.e.
$$
{\mc C}(s_{\alpha}(\lambda-\Pi))\in \mu(\PP) \;.
$$
We get
$$
S_{\beta,\lambda}\cap \Upsilon_\lambda={\mc C}\{\lambda,\eta_\beta^\gamma,\xi_\delta;\; \gamma,\delta\in\Pi\setminus\{\beta\} \} \;\subset\; \mu(\PP)^+ \;.
$$
Consider the complement
$$
X_{\beta,\lambda}=S_{\beta,\lambda}\setminus \Upsilon_\lambda \;.
$$
We have
$$
X_{\beta,\lambda} \subset {\mc C}(\{s_\beta\lambda\}\cup s_\beta(\lambda-\Pi)) \setminus {\mc C}(s_\beta(\lambda-\Pi)) \;.
$$
The polytope ${\mc C}(\{s_\beta\lambda\}\cup s_\beta(\lambda-\Pi))$ is an $\ell$-simplex and
$$
\Lambda(V) \cap {\mc C}(\{s_\beta\lambda\}\cup s_\beta(\lambda-\Pi)) = \{s_\beta\lambda\}\cup s_\alpha(\lambda-\Pi) \;.
$$
It follows that
$$
\Lambda(V) \cap X_{\beta,\lambda} \subset \Lambda(V)\cap{\mc C}(\{s_\beta\lambda\}\cup s_\beta(\lambda-\Pi)) \setminus {\mc C}(s_\beta(\lambda-\Pi))) = \{s_\beta\lambda\} \;.
$$
Thus $X_{\beta,\lambda}\cap\mu(\PP)\ne\emptyset$ would imply that $E_{s_\beta\lambda}^\lambda\subset\mu(\PP)$, which would come in contradiction with the fact that $\beta\in\Pi_1$. We conclude that
$$
S_{\beta,\lambda}\cap \mu(\PP)^+ = S_{\beta,\lambda}\cap \Upsilon_\lambda \;.
$$
This completes the proof.
\end{proof}

\subsection{A cone at $\lambda$ containing the momentum polytope}\label{A cone containing the image}

Recall that the edge-generators of $\mc C(W\lambda)$ at $\lambda$ are given by the set $\lambda-W_\lambda\Pi^\lambda$. Namely, they are $E_{\lambda-w\alpha}^\lambda$, for $\alpha\in\Pi^\lambda$, $w\in W_\lambda$.

\begin{lemma}\label{Lemma FencePredef}
The following hold:
\begin{enumerate}
\item[\rm (i)] The affine span ${\rm Aff}(W_\lambda\Pi^\lambda)$ is a hyperplane, given by
$$
{\rm Aff}(W_\lambda\Pi^\lambda) = {\rm Aff}(\Pi^\lambda)+{\rm Span}(\Pi_0) = \{\nu\in\mk t^*:(\nu|\chi_0)=||\chi_0||^2\} \;,
$$
where $\chi_0\in{\mk t}^*$ is defined by $(\chi_0|\alpha)=||\chi_0||^2$ and $(\chi|\gamma)=0$ for all $\alpha\in\Pi^\lambda$ and all $\gamma\in\Pi_0$.
\item[\rm (ii)] ${\rm Aff}(\lambda-W_\lambda\Pi^\lambda)\cap\mc C(W\lambda) = \mc C(\lambda-W_\lambda\Pi^\lambda)$.
\end{enumerate}
\end{lemma}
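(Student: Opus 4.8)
The plan is to prove the two parts separately, with part (i) doing the real work and part (ii) following quickly from it.

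For (i), the strategy is to exhibit three nested affine subspaces and show that the outer two are affine hyperplanes; this then forces all three to coincide. Since $W_\lambda=W(\Pi_0)$, for $\alpha\in\Pi^\lambda$ and $w\in W_\lambda$ one has $w\alpha-\alpha\in\sum_{\gamma\in\Pi_0}\ZZ\gamma$ (induction on a reduced word for $w$, using $s_\gamma\beta=\beta-n_{\gamma,\beta}\gamma$), hence $W_\lambda\Pi^\lambda\subseteq\Pi^\lambda+{\rm Span}(\Pi_0)$ and so ${\rm Aff}(W_\lambda\Pi^\lambda)\subseteq{\rm Aff}(\Pi^\lambda)+{\rm Span}(\Pi_0)$. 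On the other hand the relations defining $\chi_0$, namely $(\chi_0|\alpha)=||\chi_0||^2$ for $\alpha\in\Pi^\lambda$ and $(\chi_0|\gamma)=0$ for $\gamma\in\Pi_0$, show at once that every point of ${\rm Aff}(\Pi^\lambda)+{\rm Span}(\Pi_0)$ pairs with $\chi_0$ to give $||\chi_0||^2$, so ${\rm Aff}(\Pi^\lambda)+{\rm Span}(\Pi_0)\subseteq\{\nu:(\nu|\chi_0)=||\chi_0||^2\}$, and the right-hand side is a genuine affine hyperplane since $\chi_0\ne0$. It then remains only to check that ${\rm Aff}(W_\lambda\Pi^\lambda)$ itself has codimension $1$: squeezed between two affine hyperplanes, it must equal both.

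The codimension count is the heart of the matter and is exactly where faithfulness of the representation is used. Fix $\alpha_0\in\Pi^\lambda$ and let $P={\rm Span}(W_\lambda\Pi^\lambda-\alpha_0)$ be the direction space of ${\rm Aff}(W_\lambda\Pi^\lambda)$. For each $\alpha\in\Pi^\lambda$ we have $\alpha-\alpha_0\in P$ and, since $W_\lambda\alpha-\alpha=(W_\lambda\alpha-\alpha_0)-(\alpha-\alpha_0)\subseteq P$, also the direction space $D_\alpha:={\rm Span}(W_\lambda\alpha-\alpha)$ of ${\rm Aff}(W_\lambda\alpha)$ lies in $P$. Now $D_\alpha$ is $W_\lambda$-stable (for $w\in W_\lambda$, $wD_\alpha=w\,{\rm Aff}(W_\lambda\alpha)-w\alpha={\rm Aff}(W_\lambda\alpha)-w\alpha=D_\alpha$, using $w\alpha-\alpha\in D_\alpha$), and it contains $\alpha-s_\gamma\alpha=n_{\gamma,\alpha}\gamma$ for every $\gamma\in\Pi_0$ adjacent to $\alpha$; by $W_\lambda$-stability, whenever $\gamma\in\Pi_0\cap D_\alpha$ and $\gamma'\in\Pi_0$ is adjacent to $\gamma$ one also gets $\gamma-s_{\gamma'}\gamma=n_{\gamma',\gamma}\gamma'\in D_\alpha$. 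Given an arbitrary $\gamma\in\Pi_0$, faithfulness guarantees that the connected component of the Dynkin diagram through $\gamma$ also meets $\Pi^\lambda$; picking $\alpha\in\Pi^\lambda$ in that component at minimal distance from $\gamma$, a shortest path from $\alpha$ to $\gamma$ has all its intermediate vertices in $\Pi_0$, so the two displayed relations propagate $D_\alpha$-membership along it and yield $\gamma\in D_\alpha\subseteq P$. Hence $P$ contains ${\rm Span}(\Pi_0)$ together with all $\alpha-\alpha_0$; since $\Pi_0\cup\{\alpha_0\}\cup\{\alpha-\alpha_0:\alpha\in\Pi^\lambda\setminus\{\alpha_0\}\}$ is an invertible modification of the basis $\Pi$ of $\mk t^*$ and so is again a basis, $\dim P\ge|\Pi|-1$, finishing (i). I expect this propagation step to be the main obstacle, and the point needing care is that faithfulness is precisely the hypothesis that makes it work: a component of the diagram disjoint from $\Pi^\lambda$ would contribute simple roots that remain outside $P$, and the equality would fail.

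For (ii), the inclusion ``$\supseteq$'' is clear, because the points of $\lambda-W_\lambda\Pi^\lambda$ lie on edges of $\mc C(W\lambda)$ at $\lambda$ and hence belong to $\mc C(W\lambda)$. For ``$\subseteq$'', I would use that the polytope $\mc C(W\lambda)$ is contained in its tangent cone at the vertex $\lambda$, which by the description (\ref{For EdgesCWlambdaAtlambda}) of the edges at $\lambda$ equals $\lambda-{\rm Cone}(W_\lambda\Pi^\lambda)$. So a point $\nu$ in the intersection can be written $\lambda-\nu=\sum_{w,\alpha}c_{w,\alpha}\,w\alpha$ with $c_{w,\alpha}\ge0$ ($\alpha\in\Pi^\lambda$, $w\in W_\lambda$), while $\nu\in{\rm Aff}(\lambda-W_\lambda\Pi^\lambda)$ means, by part (i), that $(\lambda-\nu\,|\,\chi_0)=||\chi_0||^2$. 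Pairing the expression for $\lambda-\nu$ with $\chi_0$ and using $(w\alpha|\chi_0)=(\alpha|\chi_0)=||\chi_0||^2$ (valid since $W_\lambda$ fixes $\chi_0$) forces $\sum_{w,\alpha}c_{w,\alpha}=1$; hence $\lambda-\nu$ is a convex combination of the vectors $w\alpha$, equivalently $\nu$ is a convex combination of the points $\lambda-w\alpha\in\lambda-W_\lambda\Pi^\lambda$, so $\nu\in\mc C(\lambda-W_\lambda\Pi^\lambda)$.
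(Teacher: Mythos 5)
Your proof is correct and takes essentially the same route as the paper: the inclusion ${\rm Aff}(W_\lambda\Pi^\lambda)\subseteq{\rm Aff}(\Pi^\lambda)+{\rm Span}(\Pi_0)$ via $W_\lambda$-invariance, the identification of $\chi_0$ as the normal, faithfulness forcing the codimension-one (reverse) statement, and part (ii) from the fact that $\lambda-W_\lambda\Pi^\lambda$ are the edge-generators of $\mc C(W\lambda)$ at $\lambda$. The only difference is level of detail: where the paper merely asserts that faithfulness (no simple component of $\Pi_0$ orthogonal to $\Pi^\lambda$) yields the opposite inclusion, you spell out the Dynkin-diagram propagation $n_{\gamma,\alpha}\gamma\in D_\alpha$ along a shortest path, and you make the tangent-cone and pairing-with-$\chi_0$ argument for (ii) explicit --- both are faithful expansions of the paper's one-line justifications.
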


\begin{proof}
Fix one $\alpha\in\Pi^\lambda$ and consider the hyperplane
$$
\mc H = {\rm Aff}(\Pi^\lambda)+{\rm Span}(\Pi_0) = \alpha + {\rm Span}\{ \beta - \alpha, \gamma \;:\; \beta\in\Pi^\lambda\setminus\{\alpha\},\gamma\in\Pi_0 \} \;.
$$
Since $W_\lambda$ is generated by $s_\gamma$, $\gamma\in\Pi_0$ and $s_\gamma$ preserves ${\mc H}$, we have ${\rm Aff}(W_\lambda\Pi^\lambda)\subset{\mc H}$. The opposite inclusion follows from our assumption that the representation of ${\mk k}$ on $V_\lambda$ is faithful, so there are no simple components of $\Pi_0$ orthogonal to $\Pi^\lambda$. Then $0\in\mc H-\alpha$. Any normal vector $\chi\in(\mc H-\alpha)^\perp$ must satisfy $(\chi|\beta)=(\chi|\alpha)$ and $(\chi|\gamma)=0$ for all $\beta\in\Pi^\lambda$ and all $\gamma\in\Pi_0$. The normal $\chi_0$ belonging to ${\rm Aff}(W_\lambda\Pi^\lambda)$ is defined by further requiring $(\chi_0|\alpha)=(\chi_0|\chi_0)$. This proves part (i).

Part (ii) follows from the fact that ${\mc C}(W\lambda)$ is a convex polytope and $\lambda-W_\lambda\Pi^\lambda$ is exactly the set of vertices of the edge-generators of $\mc C(W\lambda)$ at $\lambda$.
\end{proof}

\begin{rem}
The hyperplane ${\rm Aff}(W_\lambda\Pi^\lambda)$ intersects all edges of $\mc C(W\lambda)$ at $\lambda$ at integral points. This follows directly from the definition, but is a remarkable and useful property of this hyperplane.
\end{rem}

We introduce some terminology, which we find helpful for the bookkeeping for the combinatorics to follow.

\begin{defin}
The set of roots $\Phi=W_\lambda\Pi^\lambda$ is called the {\rm root-fence of} $\lambda$. The convex hull $\tilde{\mc U}_\lambda=\mc C(\lambda-\Phi)$ is called the {\rm fence of $\lambda$}. The convex hull $\tilde{\mc U}_0=\mc C(\Phi)$ is called the {\rm fence of $\lambda$ at $0$}. The set ${\mc U}_\lambda=\mc C(\{\lambda\}\cup\tilde{\mc U}_\lambda)\setminus\tilde{\mc U}_\lambda$ is called the {\rm yard of $\lambda$}. The set ${\mc U}_0=\mc C(\{0\}\cup\tilde{\mc U}_0)\setminus\tilde{\mc U}_0$ is called the {\rm yard of $\lambda$ at $0$}. The intersection $\mu(\PP)^+\cap\ol{\mc U}_\lambda$ of the closed yard and the momentum polytope is called the {\rm house of $\lambda$}. The weights $s_\alpha\lambda$ for $\alpha\in\PP^\lambda$ are called the {\rm neighbours} of $\lambda$. The weights $s_\alpha\lambda$ for $\alpha\in\PP_1$ are called the {\rm root-neighbours} of $\lambda$. We use analogous terminology for $w\lambda$ with $w\in W$; for instance, $w\mc U_\lambda=\mc U_{w\lambda}$ is the yard of $w\lambda$.
\end{defin}

\begin{rem}
The convexity theorem for momentum polytopes implies that the house of $\lambda$ is a convex polytope. The above lemma implies that the yard ${\mc U}_\lambda$ is a connected component of $\mc C(W\lambda)\setminus\tilde{\mc U}_\lambda$. By definition, a neighbour $s_\alpha\lambda$ is a root-neighbour if and only if $\lambda-s_\alpha\lambda=\alpha\in\Delta$. By Wildberger's theorem $E^\lambda_{s_\alpha\lambda}\in\mu(\PP)$ if and only if $s_\alpha\lambda$ is not a root-neighbour.
\end{rem}

The following lemma asserts that $\lambda$ is alone in its yard. Recall that $Q\subset\Lambda$ denotes the root lattice.

\begin{lemma}\label{Lemma wlambdainUwlambda}
We have $Q\cap\mc U_0=\{0\}$ and $\mc U_{w\lambda}\cap\Lambda(V)=\{w\lambda\}$ for $w\in W$.
\end{lemma}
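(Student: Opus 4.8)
The plan is to deduce the second assertion from the first via the translation $\mc U_{w\lambda}=w\mc U_\lambda$, and to prove $Q\cap\mc U_0=\{0\}$ by tracking coordinates in the basis of simple roots.

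First I would reduce to the case $w=1$. Since $W$ permutes the weight set $\Lambda(V)$ and $\mc U_{w\lambda}=w\mc U_\lambda$ by definition, it suffices to show $\mc U_\lambda\cap\Lambda(V)=\{\lambda\}$. Translating by $\lambda$ one has $\tilde{\mc U}_\lambda=\mc C(\lambda-\Phi)=\lambda-\mc C(\Phi)=\lambda-\tilde{\mc U}_0$, hence $\mc U_\lambda=\lambda-\mc U_0$. Every $\nu\in\Lambda(V)$ satisfies $\lambda-\nu\in Q$, because all weights of the irreducible module $V_\lambda$ are congruent to $\lambda$ modulo the root lattice. Thus $\nu\in\mc U_\lambda$ forces $\lambda-\nu\in\mc U_0\cap Q$, and, granting $Q\cap\mc U_0=\{0\}$, we get $\nu=\lambda$; conversely $\lambda=\lambda-0\in\mc U_\lambda$. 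This reduces the lemma to the statement $Q\cap\mc U_0=\{0\}$.

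For the latter, the combinatorial input is that for $\alpha\in\Pi^\lambda$ and $w\in W_\lambda$ the coordinates of $w\alpha$ on $\Pi^\lambda=\Pi\setminus\Pi_0$ coincide with those of $\alpha$ (coordinate $1$ on $\alpha$, $0$ on the other roots of $\Pi^\lambda$): indeed $W_\lambda$ is generated by the reflections $s_\gamma$, $\gamma\in\Pi_0$, each of which alters only the $\gamma$-coordinate, and $w\alpha$ is moreover a positive root, so its $\Pi_0$-coordinates are $\ge0$. Let $h\colon\mk t^*\to\RR$ be the linear functional sending $x$ to the sum of its coordinates on $\Pi^\lambda$. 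The above gives $h\equiv1$ on $\Phi$, hence $h\equiv1$ on $\tilde{\mc U}_0=\mc C(\Phi)$ (this is the hyperplane of Lemma \ref{Lemma FencePredef}, as $h=(\,\cdot\,|\chi_0)/\|\chi_0\|^2$), while $h(0)=0$. It follows that $0\notin\tilde{\mc U}_0$ and, since $\mc C(\{0\}\cup\tilde{\mc U}_0)=\mc C(\{0\}\cup\Phi)$, that $\mc U_0=\mc C(\{0\}\cup\Phi)\setminus\tilde{\mc U}_0=\{x\in\mc C(\{0\}\cup\Phi):h(x)<1\}$; in particular every point of $\mc U_0$ has $0\le h(x)<1$.

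Finally I would take $q\in Q\cap\mc U_0$ and write $q=\sum_{\phi\in\Phi}a_\phi\phi$ with $a_\phi\ge0$ (a convex combination of $0$ and the elements of $\Phi$). By linearity $\sum_\phi a_\phi=h(q)<1$. On the other hand, the coordinate of $q$ on a given root $\alpha\in\Pi^\lambda$ is the sum of those $a_\phi$ for which $\phi$ has $\alpha$-coordinate $1$, hence is nonnegative, and it is an integer because $q\in Q$; so $h(q)$ is a sum of nonnegative integers that is $<1$, whence $h(q)=0$. Then $\sum_\phi a_\phi=0$, all $a_\phi=0$, and $q=0$, proving $Q\cap\mc U_0=\{0\}$ and with it the lemma. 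I do not expect a genuine obstacle here; the only point that needs care is the exact description of the half-open region $\mc U_0$ as $\{h<1\}$ inside $\mc C(\{0\}\cup\Phi)$, for which the functional $h$ (equivalently $(\,\cdot\,|\chi_0)$, up to the positive factor $\|\chi_0\|^2$) is precisely the right bookkeeping tool.
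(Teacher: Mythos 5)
Your proof is correct and follows essentially the same route as the paper: your functional $h$ is just $(\,\cdot\,|\chi_0)/\|\chi_0\|^2$ from Lemma \ref{Lemma FencePredef}, the key step is the same integrality argument (a point of $Q$ in the yard must have $\Pi^\lambda$-coordinate sum a nonnegative integer $<1$, hence $0$), and the second assertion is deduced exactly as in the paper from $\Lambda(V)\subset\lambda-Q$ and $W$-invariance. The only cosmetic difference is that you conclude $q=0$ directly from $h(q)=0$ and the convex-combination coefficients, where the paper passes through $Q_+(\Pi_0)\cap{\rm Cone}(W_\lambda\Pi^\lambda)=\{0\}$.
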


\begin{proof}
Notice that $Q\cap \mc U_0\subset Q_+$ since $W_\lambda\Pi^\lambda\subset\Delta^+$ and $\mc U_0\subset{\rm Cone}(W_\lambda\Pi^\lambda)$. By the definition of $\chi_0$ we have $(\chi_0|\nu)\geq 0$ for all $\nu\in Q_+$. Thus $\nu\in{\mc U}_0$ implies $(\chi_0|\nu)=0$ and hence $\nu\in Q_+(\Pi_0)$. But $Q_+(\Pi_0)\cap{\rm Cone}(W_\lambda\Pi^\lambda)=\{0\}$, where $Q_+(\Pi_0)$ is the monoid generated by $\Pi_0$. Hence $\nu=0$. 

Since $\Lambda(V)\subset \lambda-Q$ and $\Lambda(V)$ is $W$-invariant, we get $\mc U_{w\lambda}\cap\Lambda(V)=\{w\lambda\}$ for $w\in W$
\end{proof}

The following lemma asserts that the house of $\lambda$ does not intersect any neighbour's yard. The houses could meet at the fence.

\begin{lemma}
If $\alpha\in\Pi^\lambda$, then
$$
\mu(\PP)^+ \cap \mc U_{s_\alpha\lambda} = \emptyset \;.
$$ 
\end{lemma}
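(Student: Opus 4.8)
The plan is to argue by contradiction: assume $\xi\in\mu(\PP)^+\cap\mc U_{s_\alpha\lambda}$ and fix a unit vector $v=\sum_\nu v_\nu\in V_\lambda$ with $\mu[v]=\xi$. The first ingredient is a sublevel description of the yards: for every $w\in W$,
$$
\ol{\mc U_{w\lambda}}=\mc C(W\lambda)\cap\{\,\nu\in\mk t^*:(\nu|w\chi_0)\ge(\lambda|\chi_0)-||\chi_0||^2\,\}\,,
$$
and $\mc U_{w\lambda}$ is obtained by making the inequality strict. For $w=1$ this combines Lemma~\ref{Lemma FencePredef}(i), which identifies the affine span of $\lambda-W_\lambda\Pi^\lambda$ with the hyperplane $\{(\nu|\chi_0)=(\lambda|\chi_0)-||\chi_0||^2\}$, with the observation that $\lambda$ is the only vertex of $\mc C(W\lambda)$ strictly on the $\lambda$-side of this hyperplane: indeed $\lambda-w'\lambda$ lies in the monoid generated by $\Pi$, and $(\lambda-w'\lambda|\chi_0)$ is a non-negative integer multiple of $||\chi_0||^2$ (using $\chi_0\perp\Pi_0$ and $(\lambda|\Pi_0)=0$), so it is $<||\chi_0||^2$ only for $w'\lambda=\lambda$; hence the displayed intersection is the pyramid over the edge-generators of $\mc C(W\lambda)$ at $\lambda$, which is $\ol{\mc U_\lambda}$. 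The case of general $w$ follows by applying $w$.

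Next, since $\mu[v]=\xi\in\mk t^*$ its $\mk t$-component equals $\mu[v]$, so (cf.\ the proof of Lemma~\ref{Lemma Wildberger}) $\xi=\sum_\nu||v_\nu||^2\,\nu$, a convex combination of the weights in $\mathrm{supp}(v)\subset\Lambda(V)\subset\mc C(W\lambda)$. From $(\xi|s_\alpha\chi_0)>(\lambda|\chi_0)-||\chi_0||^2$ it follows that some $\nu\in\mathrm{supp}(v)$ satisfies the same strict inequality, hence $\nu\in\mc U_{s_\alpha\lambda}$, and by Lemma~\ref{Lemma wlambdainUwlambda} necessarily $\nu=s_\alpha\lambda$; in particular $v_{s_\alpha\lambda}\ne0$. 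If $\alpha\in\Pi_{\rm rd}$ we are already done, because then $\mc U_{s_\alpha\lambda}\cap\mk t^+=\emptyset$: writing $\xi=\lambda-q$ with $q=\sum_{\beta\in\Pi}m_\beta\beta\in\mathrm{Cone}(W_\lambda\Pi^\lambda)$, $m_\beta\ge0$, the inequality $(\xi|\chi_0)\ge(\xi|s_\alpha\chi_0)$ (valid as $\xi$ is dominant) forces $\sum_{\beta\in\Pi^\lambda}m_\beta<1$, hence $m_\alpha<1$, while inserting $\langle\beta,\alpha^\vee\rangle\le0$ for simple $\beta\ne\alpha$ into $(\xi|s_\alpha\chi_0)>(\lambda|\chi_0)-||\chi_0||^2$ forces $m_\alpha>\lambda_\alpha-1\ge1$ --- a contradiction.

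It remains to treat $\alpha\in\Pi_1$, so $\lambda_\alpha=1$ and $s_\alpha\lambda=\lambda-\alpha$. Dominance of $\xi$ gives $(\xi|\chi_0)\ge(\xi|s_\alpha\chi_0)>(\lambda|\chi_0)-||\chi_0||^2$, so $\xi\in\mc U_\lambda$ as well; rerunning the previous support argument with $\chi_0$ in place of $s_\alpha\chi_0$ yields $v_\lambda\ne0$, so $v_\lambda$ and $v_{\lambda-\alpha}$ are both nonzero. I would then use that $\mu[v]\in\mk t^*$ forces all off-diagonal components of $\mu[v]$ to vanish --- equivalently $\langle e_\alpha v,v\rangle=0$ for every root, this being exactly the vanishing of $\mu[v]$ on $e_\alpha-e_{-\alpha}$ and on $i(e_\alpha+e_{-\alpha})$ together with $e_\alpha^*=e_{-\alpha}$. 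Since $\lambda_\alpha=1$, the $\alpha$-string through $\lambda$ is $\{\lambda-\alpha,\lambda\}$ and $e_\alpha$ restricts to an isomorphism $V^{\lambda-\alpha}\to V^\lambda$, so in $\langle e_\alpha v,v\rangle=\sum_\mu\langle e_\alpha v_\mu,v_{\mu+\alpha}\rangle$ the term with $\mu=\lambda-\alpha$ equals $\langle e_\alpha v_{\lambda-\alpha},v_\lambda\rangle\ne0$. The plan is to show that this term cannot be cancelled by the others: decomposing $V=\bigoplus_k R_k$ into $\mathfrak{sl}_2(\alpha)$-strings, $V^\lambda\oplus V^{\lambda-\alpha}$ is one summand (a standard $2$-dimensional module) on which $v$ has both weight components nonzero, so any cancelling contribution would have to come from a string $R_k$ on which $v$ has two consecutive nonzero weight components; this should be excluded by combining the sublevel inequalities of the first two paragraphs --- which confine every weight of $\mathrm{supp}(v)$ other than $\lambda$ and $\lambda-\alpha$ strictly below both hyperplanes $\{(\cdot|\chi_0)=(\lambda|\chi_0)-||\chi_0||^2\}$ and $\{(\cdot|s_\alpha\chi_0)=(\lambda|\chi_0)-||\chi_0||^2\}$ --- with the $\mathfrak{sl}_2(\alpha)$-weight combinatorics.

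The main obstacle is precisely this final point. Everything up to the reduction ``$\{\lambda,\lambda-\alpha\}\subset\mathrm{supp}(v)$'' in the case $\alpha\in\Pi_1$ is routine bookkeeping with the $\chi_0$- and $s_\alpha\chi_0$-functionals; the genuine content is to control the off-diagonal (equivalently, $\mathfrak{sl}_2(\alpha)$-horizontal) contributions of $v-v_\lambda-v_{\lambda-\alpha}$ and to show they cannot compensate the non-vanishing of $\langle e_\alpha v_{\lambda-\alpha},v_\lambda\rangle$. I expect this to require a quantitative refinement of Lemma~\ref{Lemma wlambdainUwlambda} that pins down exactly which weights of $V$ can lie between the two fence hyperplanes, of $\lambda$ and of $s_\alpha\lambda$.
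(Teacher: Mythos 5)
Your reduction is sound as far as it goes: the half-space description of the yards, the identity $\mu[v]=\sum_\nu\|v_\nu\|^2\nu$ for $\mu[v]\in\mk t^*$, the use of Lemma \ref{Lemma wlambdainUwlambda} to force $s_\alpha\lambda\in\mathrm{supp}(v)$ (and $\lambda\in\mathrm{supp}(v)$ via dominance), and your explicit verification that $\mc U_{s_\alpha\lambda}\cap\mk t^+=\emptyset$ for $\alpha\in\Pi_{\rm rd}$ all check out and run parallel to the paper's setup. But the case $\alpha\in\Pi_1$ is the entire content of the lemma, and there your argument stops exactly where you say it does, so the proposal has a genuine gap. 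Moreover, the route you sketch is unlikely to close it: the sublevel inequalities only say that every support weight other than $\lambda$ and $\lambda-\alpha$ lies weakly below both fence hyperplanes, and that places no constraint whatsoever on the off-diagonal contributions $\langle e_\alpha v_\mu,v_{\mu+\alpha}\rangle$ coming from $\alpha$-string pairs deeper inside the weight diagram. Such a pair $\{\mu,\mu+\alpha\}\subset\mathrm{supp}(v)$ can contribute a term of arbitrary phase and of magnitude comparable to $\langle e_\alpha v_{\lambda-\alpha},v_\lambda\rangle$, so cancellation in $\langle e_\alpha v,v\rangle=0$ cannot be excluded pointwise from the qualitative information you have gathered; what you are really attempting is a pointwise strengthening of Wildberger's non-convexity statement, and that would require the quantitative control you admit you do not have.

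The paper avoids this entirely by staying at the level of convex geometry and importing Wildberger's theorem as a black box. Since $\mu(\PP)^+$ is a convex polytope (Kirwan) containing $\lambda$, and $\mc U_{s_\alpha\lambda}$ is the intersection of $\mc C(W\lambda)$ with an open half-space whose only lattice point of $\Lambda(V)$ is $s_\alpha\lambda$ (your paragraph one plus Lemma \ref{Lemma wlambdainUwlambda}), a nonempty intersection $\mu(\PP)^+\cap\mc U_{s_\alpha\lambda}$ would force the momentum polytope to contain an edge $E^\lambda_\nu$ with $\nu\in\mc U_{s_\alpha\lambda}\cap\Lambda(V)=\{s_\alpha\lambda\}$, i.e.\ $E^\lambda_{s_\alpha\lambda}\subset\mu(\PP)$; this contradicts Theorem \ref{Theo Wildberger} (equivalently the remark that $E^\lambda_{s_\alpha\lambda}\not\subset\mu(\PP)$ precisely when $\lambda-s_\alpha\lambda=\alpha$ is a root, which is the meaning of $\alpha\in\Pi_1$). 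So the missing ingredient in your write-up is not a refinement of Lemma \ref{Lemma wlambdainUwlambda} but the invocation of Wildberger's characterization of the non-root-distinct edge; once you replace your $\langle e_\alpha v,v\rangle$-cancellation analysis by that polytope-plus-edge argument, the proof closes.
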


\begin{proof}
Let $\alpha\in\Pi^\lambda=\Pi_{\rm rd}\sqcup\Pi_0$. We have $\alpha\in\Pi_{\rm rd}$ if and only if $\mc U_{s_\alpha\lambda}\cap\mk t^+=\emptyset$. Suppose $\alpha\in\Pi_1$. The set $\mc U_{s_\alpha\lambda}$ is the intersection of ${\mc C}(W\lambda)$ with an open half-space. Namely, the open half-space containing $s_\alpha\lambda$ and bounded by the hyperplane $s_\alpha(\lambda-{\rm Aff}(W_\lambda\Pi^\lambda))$. It follows that, if the set $\mu(\PP)^+ \cap \mc U_{s_\alpha\lambda}$ is nonempty, it must contain an edge $E$ of $\mu(\PP)^+$ of the form $E=E^\lambda_\nu$ with $\nu\in \mc U_{s_\alpha\lambda}\cap \Lambda(V)$. From Lemma \ref{Lemma wlambdainUwlambda}, we know that $\mc U_{s_\alpha\lambda}\cap\Lambda(V)=s_\alpha\lambda= \lambda-\alpha$. But $E^\lambda_{s_\alpha\lambda}$ does not belong to $\mu(\PP)$ because $\alpha\in\Pi_1$. 
\end{proof}

\begin{coro}\label{Coro NoCommonYards}
We have
$$
\mu(\PP)^+ \subset {\mc C}(W\lambda)^+ \setminus \left( \bigcup\limits_{\alpha\in\Pi_1} \mc U_{s_\alpha\lambda}^+ \right) = \left\{ \nu\in\mc C(W\lambda)^+: (\nu|s_\alpha\chi_0)\leq(\lambda|s_\alpha\chi_0) \;,\;{\rm for}\;\alpha\in\Pi_1 \right\} \;.
$$
The momentum image does not contain any intersections of yards, i.e.
$$
\mu(\PP)^{\mk t} \subset \mc C(W\lambda) \setminus \left( \bigcup\limits_{w,w'\in W} \mc U_{w\lambda}\cap\mc U_{w'\lambda} \right)
$$
\end{coro}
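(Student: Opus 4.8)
The plan is to prove the two inclusions separately: the first is essentially a reformulation of the previous lemma together with Lemma~\ref{Lemma FencePredef}, while the second requires pushing the previous lemma from the fixed chamber $\mk t^+$ to all of $\mk t^*$.

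For the first inclusion I would argue as follows. The containment $\mu(\PP)^+\subseteq\mc C(W\lambda)^+$ is the Weyl-polytope bound recalled in Section~\ref{Sect Weyl polytope}, and the previous lemma gives $\mu(\PP)^+\cap\mc U_{s_\alpha\lambda}=\emptyset$ for every $\alpha\in\Pi^\lambda$, in particular for $\alpha\in\Pi_1$; together these yield $\mu(\PP)^+\subseteq\mc C(W\lambda)^+\setminus\bigcup_{\alpha\in\Pi_1}\mc U_{s_\alpha\lambda}^+$, so only the stated description of this set remains. By Lemma~\ref{Lemma FencePredef}(i) the hyperplane ${\rm Aff}(\lambda-W_\lambda\Pi^\lambda)$ equals $\{\nu:(\nu|\chi_0)=(\lambda|\chi_0)-||\chi_0||^2\}$, and by part (ii) it meets $\mc C(W\lambda)$ exactly in $\tilde{\mc U}_\lambda$. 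Next I would check that $(w\lambda|\chi_0)\le(\lambda|\chi_0)-||\chi_0||^2$ whenever $w\lambda\ne\lambda$: the weight $\lambda-w\lambda$ is a nonzero nonnegative integral combination of simple roots not supported on $\Pi_0$ (otherwise $\lambda-w\lambda\in{\rm Span}(\Pi_0)\perp\lambda$ and $|w\lambda|=|\lambda|$ would force $w\lambda=\lambda$), so it involves some $\alpha\in\Pi^\lambda$, and $(\alpha|\chi_0)=||\chi_0||^2$ while $(\gamma|\chi_0)=0$ for $\gamma\in\Pi_0$. Hence $\lambda$ is the only vertex of $\mc C(W\lambda)$ strictly on the $\lambda$-side of that hyperplane, so $\mc U_\lambda=\mc C(W\lambda)\cap\{(\nu|\chi_0)>(\lambda|\chi_0)-||\chi_0||^2\}$. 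Applying $s_\alpha$, using $W$-invariance of $\mc C(W\lambda)$ and the identity $(\lambda|s_\alpha\chi_0)=(s_\alpha\lambda|\chi_0)=(\lambda|\chi_0)-\lambda_\alpha||\chi_0||^2=(\lambda|\chi_0)-||\chi_0||^2$ (valid since $\lambda_\alpha=1$ for $\alpha\in\Pi_1$), one gets $\mc U_{s_\alpha\lambda}=\mc C(W\lambda)\cap\{(\nu|s_\alpha\chi_0)>(\lambda|s_\alpha\chi_0)\}$. Taking complements in $\mc C(W\lambda)$ and intersecting with $\mk t^+$ finishes this part.

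For the second inclusion I would use the $W$-invariance of $\mu(\PP)^{\mk t}$ to reduce, by conjugating into the dominant chamber, to the statement that no $\bar\nu\in\mu(\PP)^+$ lies in $\mc U_p\cap\mc U_q$ with $p\ne q$ in $W\lambda$; since one of $p,q$ is then $\ne\lambda$, it suffices to prove $\mu(\PP)^+\cap\mc U_q=\emptyset$ for every $q\in W\lambda\setminus\{\lambda\}$. When $\mc U_q\cap\mk t^+=\emptyset$ this is trivial (this already covers $q=s_\alpha\lambda$ with $\alpha\in\Pi_{\rm rd}$). Otherwise, writing $q=w\lambda$ and using the description above, $\mc U_q=\mc C(W\lambda)\cap\{(\nu|w\chi_0)>(\lambda|\chi_0)-||\chi_0||^2\}$; since $\chi_0$ is dominant, $\chi_0-w\chi_0\in{\rm Cone}(\Delta^+)$, hence $(\delta|\chi_0)\ge(\delta|w\chi_0)$ for dominant $\delta$, so $\mc U_q\cap\mk t^+\subseteq\mc U_\lambda$: the cap $\mc U_q$ can meet $\mk t^+$ only inside chambers already met by $\mc U_\lambda$. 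I would then show that $\mc U_\lambda$ meets, besides the dominant chamber, only the chambers of the root-neighbours $s_\beta\lambda$ of $\lambda$ with $\beta\in W_\lambda\Pi_1$: the edges of $\mc U_\lambda$ at $\lambda$ point towards $w_0 s_\alpha\lambda=\lambda-\lambda_\alpha w_0\alpha$ ($\alpha\in\Pi^\lambda,\ w_0\in W_\lambda$), whose edge-generator $\lambda-w_0\alpha$ is dominant when $\alpha\in\Pi_{\rm rd}$ and equals $s_{w_0\alpha}\lambda$ (with $w_0\alpha$ a root) when $\alpha\in\Pi_1$. Consequently the only $q\ne\lambda$ with $\mc U_q\cap\mk t^+\ne\emptyset$ are $q=s_\beta\lambda$, $\beta\in W_\lambda\Pi_1$. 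For such $q$, $\lambda$ is a vertex of the fence of $q$ (as $s_\beta\lambda=\lambda-\beta$ with $\beta\in W_\lambda\Pi^\lambda$), $\mc U_q\cap\Lambda(V)=\{s_\beta\lambda\}$ by Lemma~\ref{Lemma wlambdainUwlambda}, and $s_\beta\lambda=\lambda-\beta$ is not dominant because $2(\beta|\lambda-\beta)/(\beta|\beta)=n_{\beta,\lambda}-2=\lambda_\alpha-2=-1<0$. Now the argument of the previous lemma applies verbatim: an intrusion of the polytope $\mu(\PP)^+$ into $\mc U_q$ would, by convexity and since $\lambda$ is a vertex of $\mu(\PP)^+$ on the fence hyperplane of $q$, produce an edge of $\mu(\PP)^+$ at $\lambda$ whose first weight lies in $\mc U_q\cap\Lambda(V)=\{s_\beta\lambda\}\not\subseteq\mk t^+$, a contradiction. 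This gives $\mu(\PP)^+\cap\mc U_q=\emptyset$ for all $q\ne\lambda$, and the second inclusion follows.

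The step I expect to be the main obstacle is the claim in the core step that $\mc U_\lambda$ touches no Weyl chamber beyond those of $\lambda$ and of its root-neighbours $s_\beta\lambda$, $\beta\in W_\lambda\Pi_1$. The $\chi_0$-height estimate $\mc U_q\cap\mk t^+\subseteq\mc U_\lambda$ does the coarse part, but converting ``small $\chi_0$-height'' into the precise list of chambers met by $\mc U_\lambda=\mc C(\{\lambda\}\cup(\lambda-W_\lambda\Pi^\lambda))$ requires a careful analysis of the root-fence $W_\lambda\Pi^\lambda$ and of how the faces of $\mc U_\lambda$ lie relative to the walls $\mc H_\gamma$; this is where I expect most of the genuine work to go.
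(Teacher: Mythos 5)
Your first inclusion is fine and is exactly the paper's route: the preceding lemma gives $\mu(\PP)^+\cap\mc U_{s_\alpha\lambda}=\emptyset$, the yard is the part of $\mc C(W\lambda)$ strictly beyond the fence hyperplane (Lemma \ref{Lemma FencePredef}), and the identity $(\lambda|s_\alpha\chi_0)=(\lambda|\chi_0)-||\chi_0||^2$ for $\alpha\in\Pi_1$ converts this into the stated inequalities; your vertex-by-vertex check that $\lambda$ is the only vertex strictly above the hyperplane is correct. The genuine gap is in the second inclusion, precisely where you flag it. Your reduction to ``$\mu(\PP)^+\cap\mc U_q=\emptyset$ for every $q\in W\lambda\setminus\{\lambda\}$'' and the observation $\mc U_q\cap\mk t^+\subseteq\mc U_\lambda$ (from dominance of $\nu$ and $\chi_0-w\chi_0\in{\rm Cone}(\Delta^+)$) are correct, but the pivotal classification claim --- that $\mc U_q$ can meet $\mk t^+$ only for $q\in W_\lambda s_\alpha\lambda$, $\alpha\in\Pi_1$ --- is left unproved, and the sketch you offer for it does not work. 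First, it is false that $\lambda-w_0\alpha$ is dominant for $\alpha\in\Pi_{\rm rd}$ and arbitrary $w_0\in W_\lambda$: for $K=SU_3$, $\lambda=2\omega_1$, $w_0=s_{\alpha_2}$ one gets $\lambda-s_{\alpha_2}\alpha_1=\omega_1-\omega_2$. Second, knowing where the edges of $\mc U_\lambda$ at $\lambda$ point does not control which chambers the full-dimensional caps meet: already for the natural representation of $SU_3$ the cap $\mc U_\lambda$ contains $0$ and meets every Weyl chamber (the claim survives there only because every chamber happens to contain $\lambda$ or a root-neighbour in its closure). So, as written, the second inclusion is not established; the ``careful analysis of the root-fence'' you defer is the whole content of that step.

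The good news is that the chamber classification is not needed, and bypassing it brings you back to what the corollary is in the paper: an immediate consequence of the preceding lemma's argument. For any $q=w\lambda\ne\lambda$ you already have the two facts that make that argument run: $\mc U_q\cap\Lambda(V)=\{q\}$ (Lemma \ref{Lemma wlambdainUwlambda}), and $q$ is not dominant (indeed $(\lambda|w\chi_0)\le(\lambda|\chi_0)-||\chi_0||^2$, so $\lambda$ lies on or below the fence hyperplane of $q$). If $\mu(\PP)^+$ met $\mc U_q$, then maximizing the linear functional $(\,\cdot\,|w\chi_0)$ over the convex polytope $\mu(\PP)^+$ produces, exactly as in the proof of the lemma, a vertex of $\mu(\PP)^+$ (endpoint of an edge $E^\lambda_\nu$) which is a weight lying in $\mc U_q$, hence equal to $q$; but $q\in\mu(\PP)^+\subset\mk t^+$ is impossible since $\lambda$ is the only dominant element of $W\lambda$ (and for $q=\lambda-\beta$ with $\beta$ a root one can alternatively invoke Wildberger as in the lemma). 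This treats all $q\ne\lambda$ uniformly, with no case distinction according to which chambers $\mc U_\lambda$ touches. Note that the one step being leaned on here --- that the offending vertex of $\mu(\PP)^+$ inside a yard is a weight --- is the same step the paper itself asserts without elaboration in the lemma's proof, and your ``applies verbatim'' appeal inherits it; if you spell that step out once, you get the second inclusion for every $q$ at once and your unproved chamber claim becomes unnecessary.
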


We shall see that in a number of cases these conditions determine exactly the momentum image. The above corollary implies that the momentum polytope $\mu(\PP)^+$ is contained in the intersection of the Weyl chamber with a rational polyhedral cone. We now proceed to describe this cone. We will make a $W_\lambda$-invariant construction, considering the momentum polytopes of $\PP(V_\lambda)$ relative to all Weyl chambers containing $\lambda$. This will allow us to have integral generators for the requested cone at $\lambda$.

We start with the cone at $\lambda$ generated by the Weyl polytope, ${\rm Cone}_\lambda(W\lambda)=\lambda-{\rm Cone}(W_\lambda\Pi^\lambda)$. We consider the subcone generated by the polytope where all yards of root-neighbours of $\lambda$ are removed:
\begin{gather}\label{For Upsilonlambda}
\Upsilon_\lambda = {\rm Cone}_\lambda \left( \mc C(W\lambda) \setminus ( \bigcup\limits_{\alpha\in\Pi_1,w\in W_\lambda} \mc U_{ws_\alpha\lambda} ) \right)  \;.
\end{gather}

In the next lemma we give two descriptions of the cone $\Upsilon_\lambda$. One by the inequalities bounding $\Upsilon_\lambda$ within ${\rm Cone}_\lambda(W\lambda)$. The other, under a condition, as a cone at $\lambda$ generated by a specific part of the fence.

\begin{lemma}
The following hold:
\begin{enumerate}
\item[\rm (i)] $\Upsilon_\lambda = \left\{ \nu\in {\rm Cone}_\lambda(W\lambda) : (\nu|ws_\alpha\chi_0)\leq(\lambda|ws_\alpha\chi_0) , \forall\alpha\in\Pi_1,\forall w\in W_\lambda \right\}$.
\item[\rm (ii)] If $\tilde{\mc U}_\lambda^+\ne\emptyset$, then $\Upsilon_\lambda = {\rm Cone}_\lambda(W_\lambda \wt{\mc U}_\lambda^+)$.
\end{enumerate}
\end{lemma}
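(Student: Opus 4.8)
The plan is to deduce both parts from Corollary~\ref{Coro NoCommonYards} and Lemma~\ref{Lemma FencePredef}, together with one elementary observation about affine cones sharing a boundary point.

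For (i), I would first turn the yards into half-space cuts. Both $\lambda$ and $\chi_0$ are fixed by $W_\lambda$ (the generators $s_\gamma$, $\gamma\in\Pi_0$, fix $\chi_0$ because $(\chi_0|\gamma)=0$), so $\mc U_{ws_\alpha\lambda}=w\mc U_{s_\alpha\lambda}$ and $(\lambda|ws_\alpha\chi_0)=(\lambda|s_\alpha\chi_0)$; hence Corollary~\ref{Coro NoCommonYards}, applied for $w=1$ and then translated by $w$, gives for $\alpha\in\Pi_1$, $w\in W_\lambda$
$$
\mc U_{ws_\alpha\lambda}=\mc C(W\lambda)\cap\{\nu:(\nu|ws_\alpha\chi_0)>(\lambda|ws_\alpha\chi_0)\}\;.
$$
Thus $P:=\mc C(W\lambda)\setminus\bigcup_{\alpha\in\Pi_1,w\in W_\lambda}\mc U_{ws_\alpha\lambda}=\mc C(W\lambda)\cap\bigcap_{\alpha,w}H^-_{\alpha,w}$ with $H^-_{\alpha,w}=\{\nu:(\nu|ws_\alpha\chi_0)\leq(\lambda|ws_\alpha\chi_0)\}$, and the decisive feature is that $\lambda$ lies on the bounding hyperplane of every $H^-_{\alpha,w}$. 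I would then invoke the elementary lemma: if $Q\subset\mk t^*$ is convex, $a\in Q$, and $H^-_1,\dots,H^-_k$ are closed half-spaces each having $a$ on its boundary, then
$$
{\rm Cone}_a(Q\cap H^-_1\cap\cdots\cap H^-_k)={\rm Cone}_a(Q)\cap H^-_1\cap\cdots\cap H^-_k\;.
$$
Here ``$\subseteq$'' is clear (a ray from $a$ through a point of $H^-_i$ stays in $H^-_i$), and for ``$\supseteq$'' one writes a point of the right side as $a+t(z-a)$ with $z\in Q$, $t\geq0$ (possible since $Q$ is convex and $a\in Q$) and observes that the segment $[a,z]$ lies in $Q$ and in each $H^-_i$, so $z\in Q\cap\bigcap_iH^-_i$. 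Applying this with $Q=\mc C(W\lambda)$, $a=\lambda$, and using ${\rm Cone}_\lambda(\mc C(W\lambda))={\rm Cone}_\lambda(W\lambda)$, we get $\Upsilon_\lambda={\rm Cone}_\lambda(P)={\rm Cone}_\lambda(W\lambda)\cap\bigcap_{\alpha,w}H^-_{\alpha,w}$, which is (i).

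For (ii), I would pass to a cross-section. Let $\mc H={\rm Aff}(\lambda-W_\lambda\Pi^\lambda)=\{\mu:(\mu|\chi_0)=(\lambda|\chi_0)-||\chi_0||^2\}$ (Lemma~\ref{Lemma FencePredef}). Since ${\rm Cone}_\lambda(W\lambda)=\lambda-{\rm Cone}(W_\lambda\Pi^\lambda)$ and $(\phi|\chi_0)=||\chi_0||^2>0$ for every $\phi\in W_\lambda\Pi^\lambda$, each ray of this cone issuing from $\lambda$ meets $\mc H$ exactly once and ${\rm Cone}_\lambda(W\lambda)\cap\mc H=\tilde{\mc U}_\lambda$. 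Combining this with (i) and with the same ``ray-pushing'' (now pushing each point of $\Upsilon_\lambda$ outward onto $\mc H$) gives $\Upsilon_\lambda={\rm Cone}_\lambda(S)$, where $S:=\tilde{\mc U}_\lambda\cap\bigcap_{\alpha\in\Pi_1,w\in W_\lambda}H^-_{\alpha,w}$. Writing $ws_\alpha\chi_0=\chi_0-c_\alpha w\alpha$ with $c_\alpha=2||\chi_0||^2/(\alpha|\alpha)>0$ and using $(\mu|\chi_0)=(\lambda|\chi_0)-||\chi_0||^2$ on $\tilde{\mc U}_\lambda$, the inequality cutting out $H^-_{\alpha,w}$ becomes simply $(\mu|w\alpha)\geq0$; hence $S=\{\mu\in\tilde{\mc U}_\lambda:(\mu|w\alpha)\geq0\text{ for all }\alpha\in\Pi_1,\ w\in W_\lambda\}$. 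It then remains to prove $S=W_\lambda\tilde{\mc U}_\lambda^+$, from which ${\rm Cone}_\lambda(S)={\rm Cone}_\lambda(W_\lambda\tilde{\mc U}_\lambda^+)$, and the hypothesis $\tilde{\mc U}_\lambda^+\neq\emptyset$ is what keeps the right-hand side non-degenerate.

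To see $S=W_\lambda\tilde{\mc U}_\lambda^+$ I would argue both inclusions. For ``$\supseteq$'': any $u\in W_\lambda$ changes a root only in its $\Pi_0$-coordinates, so for $\alpha\in\Pi_1$ the root $u\alpha$ is again positive, whence $(v\mu|w\alpha)=(\mu|v^{-1}w\alpha)\geq0$ for dominant $\mu$ and $v,w\in W_\lambda$; with the $W_\lambda$-invariance of $\tilde{\mc U}_\lambda$ this gives $W_\lambda\tilde{\mc U}_\lambda^+\subseteq S$. For ``$\subseteq$'': $S$ is $W_\lambda$-invariant, so $S=W_\lambda(S\cap D_0)$ where $D_0=\{\mu:(\mu|\gamma)\geq0\ \forall\gamma\in\Pi_0\}$ is a fundamental domain for $W_\lambda$; for $\mu\in S\cap D_0$ one has $(\mu|\gamma)\geq0$ for $\gamma\in\Pi_0$ and $(\mu|\alpha)\geq0$ for $\alpha\in\Pi_1$ (take $w=1$ in the description of $S$), and the remaining inequalities $(\mu|\alpha)\geq0$, $\alpha\in\Pi_{\rm rd}$, hold automatically for \emph{every} $\mu\in\tilde{\mc U}_\lambda$. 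This last estimate is the heart of the proof: writing $\mu=\lambda-\sum_{\phi\in W_\lambda\Pi^\lambda}t_\phi\phi$ with $t_\phi\geq0$, $\sum t_\phi=1$, and $\phi=w_\phi\beta_\phi$, one has $(\phi|\alpha)=(\beta_\phi|w_\phi^{-1}\alpha)\leq(\beta_\phi|\alpha)$ (because $w_\phi^{-1}\alpha-\alpha$ is a nonnegative combination of $\Pi_0$ and $(\beta_\phi|\gamma)\leq0$ for $\gamma\in\Pi_0$), and then, separating the terms with $\beta_\phi=\alpha$,
$$
(\mu|\alpha)\ \geq\ (\lambda|\alpha)-T_\alpha(\alpha|\alpha)\ =\ (\alpha|\alpha)\bigl(\tfrac12\lambda_\alpha-T_\alpha\bigr)\ \geq\ 0\;,
$$
with $T_\alpha:=\sum_{\phi:\beta_\phi=\alpha}t_\phi\leq1$ and $\lambda_\alpha\geq2$. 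Hence $S\cap D_0=\tilde{\mc U}_\lambda^+$ and $S=W_\lambda\tilde{\mc U}_\lambda^+$. The main obstacle is exactly this estimate: it is the one place where the roots in $\Pi_{\rm rd}$ (coefficient $\geq2$) and those in $\Pi_1$ (coefficient $1$) must be treated differently, since for $\alpha\in\Pi_1$ the edge-generator $\lambda-\alpha$ is already non-dominant, which is precisely why $\Upsilon_\lambda$ genuinely loses those directions.
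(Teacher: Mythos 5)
Your proposal is correct; part (i) coincides with the paper's argument, while part (ii) takes a genuinely different route. For (i) you do what the paper does: identify each yard $\mc U_{ws_\alpha\lambda}$ with the open half-space slice of $\mc C(W\lambda)$ bounded by the hyperplane $\{\nu:(\nu|ws_\alpha\chi_0)=(\lambda|ws_\alpha\chi_0)\}$, then pass to ${\rm Cone}_\lambda$; your explicit lemma that ${\rm Cone}_\lambda$ commutes with cutting by half-spaces whose boundary contains $\lambda$ is exactly the step the paper leaves implicit (``This implies part (i)''), and your check of it is sound. (A small bookkeeping point: the half-space description of an individual yard is really the content of the lemma preceding Corollary \ref{Coro NoCommonYards} combined with Lemma \ref{Lemma FencePredef}, not of the corollary itself, which only records the union of dominant yard parts; but this is the same fact the paper's own proof invokes, so nothing is missing.) For (ii) the paper stays inside the yard combinatorics: it uses ${\rm Cone}_\lambda(W\lambda)={\rm Cone}_\lambda(\wt{\mc U}_\lambda)$, the decomposition $\ol{\mc U}_\lambda^+=\mc C(\{\lambda\}\cup\tilde{\mc U}_\lambda^+)\cup\bigl(\bigcup_{\alpha\in\Pi_1,w\in W_\lambda}\ol{\mc U}_{ws_\alpha\lambda}^+\bigr)$, and the disjointness $\mc U_{ws_\alpha\lambda}\cap\Upsilon_\lambda=\emptyset$. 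You instead slice $\Upsilon_\lambda$ by the fence hyperplane ${\rm Aff}(\lambda-W_\lambda\Pi^\lambda)$, convert the cutting inequalities on that slice into $(\mu|w\alpha)\geq0$ for $\alpha\in\Pi_1$, $w\in W_\lambda$ (your computation using $ws_\alpha\chi_0=\chi_0-c_\alpha w\alpha$ and $(\lambda|\alpha)=\tfrac12(\alpha|\alpha)$ is correct), and prove directly that the section equals $W_\lambda\tilde{\mc U}_\lambda^+$ via the $W_\lambda$-invariance and the fundamental-domain reduction; the decisive estimate, that $(\mu|\alpha)\geq0$ holds automatically on all of $\tilde{\mc U}_\lambda$ for $\alpha\in\Pi_{\rm rd}$ because $w_\phi^{-1}\alpha-\alpha$ is a nonnegative $\Pi_0$-combination and $\lambda_\alpha\geq2$, is correctly identified and correctly proved. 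What the paper's route buys is brevity, reusing the yard decomposition already set up; what yours buys is a self-contained verification of a step the paper essentially asserts, together with an intrinsic description of the cross-section $\Upsilon_\lambda\cap{\rm Aff}(\lambda-W_\lambda\Pi^\lambda)$ by root inequalities, which also makes transparent where the hypothesis $\tilde{\mc U}_\lambda^+\ne\emptyset$ and the distinction between $\Pi_{\rm rd}$ and $\Pi_1$ actually enter.
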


\begin{proof}
For given $\alpha\in\Pi_1$ and $w\in W_\lambda$, the hyperplane 
$$
{\rm Aff}(ws_\alpha(\lambda-W_\lambda\Pi^\lambda)) = \{ \nu\in\mk t^*: (\nu|ws_\alpha\chi_0)=(\lambda|ws_\alpha\chi_0) \}
$$
divides $\mc C(W\lambda)$ in two connected components: $\mc U_{ws_\alpha\lambda}$ and its complement. Thus
$$
\mc C(W\lambda)\setminus\mc U_{ws_\alpha\lambda} = \{\nu\in \mc C(W\lambda) : (\nu|ws_\alpha\chi_0)\leq(\lambda|ws_\alpha\chi_0) \}
$$
This implies part (i). For part (ii), notice that ${\rm Cone}_\lambda(W\lambda)={\rm Cone}_\lambda(\wt{\mc U}_\lambda)$ and
$$
\ol{\mc U}_\lambda^+=\mc C(\{\lambda\}\cup\tilde{\mc U}_\lambda^+) \cup \left( \bigcup\limits_{\alpha\in\Pi_1,w\in W_\lambda} \ol{\mc U}_{ws_\alpha\lambda}^+  \right) \;.
$$
Now the statement follows from the fact that $\mc U_{ws_\alpha\lambda} \cap \Upsilon_\lambda=\emptyset$ for $\alpha\in\Pi_1,w\in W_\lambda$.
\end{proof}

\begin{theorem}\label{Theo muPPinUpsilonlambda}
For any $\lambda\in\Lambda^+$ we have
$$
W_\lambda(\mu(\PP)^+) \subset \Upsilon_\lambda \;.
$$
In particular,
$$
\mu(\PP)^+\subset\Upsilon_\lambda^+\;.
$$
\end{theorem}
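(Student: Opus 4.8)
The plan is to reduce the whole statement to the $W_\lambda$-orbit version of the Lemma immediately preceding Corollary~\ref{Coro NoCommonYards}, namely the disjointness
$$
\mu(\PP)^+\cap\mc U_{ws_\alpha\lambda}=\emptyset\qquad\text{for all }\alpha\in\Pi_1\text{ and all }w\in W_\lambda .
$$
Granting this, the theorem is formal. By the proof of the preceding Lemma, for each such pair $(\alpha,w)$ the hyperplane $\{\nu\in\mk t^*:(\nu|ws_\alpha\chi_0)=(\lambda|ws_\alpha\chi_0)\}$ separates $\mc C(W\lambda)$ into the open yard $\mc U_{ws_\alpha\lambda}$ and a complementary closed slice; hence the disjointness, together with $\mu(\PP)^+\subset\mc C(W\lambda)$, yields
$$
\mu(\PP)^+\ \subset\ \mc C(W\lambda)\setminus\bigcup_{\alpha\in\Pi_1,\,w\in W_\lambda}\mc U_{ws_\alpha\lambda}\ \subset\ \Upsilon_\lambda ,
$$
the second inclusion being immediate from the definition of $\Upsilon_\lambda$ as the cone at $\lambda$ over that set. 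The intermediate set is $W_\lambda$-invariant, since $v\,\mc U_{ws_\alpha\lambda}=\mc U_{vws_\alpha\lambda}$ for $v\in W_\lambda$ and $\mc C(W\lambda)$ is $W$-invariant; applying $W_\lambda$ therefore gives $W_\lambda\mu(\PP)^+\subset\Upsilon_\lambda$, and intersecting with $\mk t^+$ (using $\mu(\PP)^+\subset\mk t^+$) gives $\mu(\PP)^+\subset\Upsilon_\lambda^+$.

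To prove the disjointness I would rerun, almost verbatim, the argument of the Lemma preceding Corollary~\ref{Coro NoCommonYards}, now with the point $ws_\alpha\lambda$ of $\mc C(W\lambda)$ playing the role of $s_\alpha\lambda$. Fix $\alpha\in\Pi_1$ and $w\in W_\lambda$; we may assume $\mc U_{ws_\alpha\lambda}$ meets $\mk t^+$, as otherwise there is nothing to prove. The yard $\mc U_{ws_\alpha\lambda}=ws_\alpha\mc U_\lambda$ is $\mc C(W\lambda)$ intersected with the open half-space $\{(\nu|ws_\alpha\chi_0)>(\lambda|ws_\alpha\chi_0)\}$, whose boundary hyperplane visibly contains $\lambda$. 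If $\mu(\PP)^+$ met this open yard, then, $\mu(\PP)^+$ being a convex polytope with $\lambda$ a vertex lying on that boundary hyperplane, $\mu(\PP)^+$ would carry an edge $E^\lambda_\nu$ at $\lambda$ entering the open half-space, and, exactly as in the cited Lemma, the far endpoint $\nu$ would be a weight of $V$; then $\nu\in\mc U_{ws_\alpha\lambda}\cap\Lambda(V)=\{ws_\alpha\lambda\}$ by Lemma~\ref{Lemma wlambdainUwlambda}, so that $E^\lambda_{ws_\alpha\lambda}\subset\mu(\PP)$.

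This last inclusion cannot hold. Since $w\in W_\lambda$ fixes $\lambda$, we have $E^\lambda_{ws_\alpha\lambda}=w\,E^\lambda_{s_\alpha\lambda}$, and as $\mu(\PP)^{\mk t}$ is $W$-invariant the inclusion $E^\lambda_{ws_\alpha\lambda}\subset\mu(\PP)$ would force $E^\lambda_{s_\alpha\lambda}\subset w^{-1}\mu(\PP)^{\mk t}=\mu(\PP)^{\mk t}$; but $s_\alpha\lambda$ is a root-neighbour of $\lambda$ because $\alpha\in\Pi_1$, so by Wildberger's Theorem~\ref{Theo Wildberger} the edge $E^\lambda_{s_\alpha\lambda}$ is not contained in $\mu(\PP)$. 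This contradiction establishes the disjointness, and with it the theorem.

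The one genuinely non-formal ingredient — which I would borrow from the Lemma preceding Corollary~\ref{Coro NoCommonYards} rather than reprove — is the step asserting that, when the momentum polytope crosses the fence hyperplane at $\lambda$, it does so along an edge $E^\lambda_\nu$ whose far endpoint is a weight of $V$, i.e.\ the local description of $\mu(\PP)^+$ at the extreme point $\lambda$ in terms of weight segments. Everything else is formal manipulation: $W_\lambda$-equivariance of the yards, the $W$-invariance of $\mu(\PP)^{\mk t}$, and the half-space description of the yards already recorded in the preceding Lemma.
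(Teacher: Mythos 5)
Your argument is correct, and it is a more detailed variant of the paper's route rather than a different method: the paper disposes of the theorem in one line, citing Corollary~\ref{Coro NoCommonYards} together with the $W_\lambda$-invariance of $\Upsilon_\lambda$ and $\mu(\PP)$, whereas you noticed (rightly) that the corollary as stated only records the inequalities $(\nu|s_\alpha\chi_0)\leq(\lambda|s_\alpha\chi_0)$ for $w=1$, and that simply applying $W_\lambda$ does not immediately produce the remaining inequalities, since $W_\lambda\mu(\PP)^+$ leaves $\mk t^+$. Your fix is to upgrade the disjointness lemma from the yards $\mc U_{s_\alpha\lambda}$ to all yards $\mc U_{ws_\alpha\lambda}$, $w\in W_\lambda$, $\alpha\in\Pi_1$, by rerunning its proof: Lemma~\ref{Lemma wlambdainUwlambda} is indeed stated for every $w\in W$, the bounding hyperplane of $\mc U_{ws_\alpha\lambda}$ does contain $\lambda$ (since $ws_\alpha(\lambda-\alpha)=\lambda$), and the final contradiction via $E^\lambda_{ws_\alpha\lambda}=w\,E^\lambda_{s_\alpha\lambda}$, the $W$-invariance of $\mu(\PP)^{\mk t}$, and Wildberger's edge criterion is sound; the one step you borrow (a crossing of the fence hyperplane forces an edge $E^\lambda_\nu$ with $\nu\in\Lambda(V)$) is exactly the step the paper itself uses in the lemma preceding Corollary~\ref{Coro NoCommonYards}, so you incur no dependence the paper does not already have. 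What your route buys is a self-contained justification of all the defining inequalities of $\Upsilon_\lambda$; what the paper's shortcut implicitly relies on (and could be supplied instead of your extension) is the observation that $s_\alpha\chi_0$ is dominant for the root subsystem generated by $\Pi_0$, since $(s_\alpha\chi_0|\gamma)=-n_{\gamma,\alpha}||\chi_0||^2\geq0$ for $\gamma\in\Pi_0$, so that for $\nu\in\mk t^+$ one has $(\nu|ws_\alpha\chi_0)\leq(\nu|s_\alpha\chi_0)$ and the $w=1$ inequalities already imply all the others on the dominant chamber. Either completion is fine; yours is closer to the geometry of the yards, the other is a one-line computation with $\chi_0$.
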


\begin{proof}
The theorem follows directly from Corollary \ref{Coro NoCommonYards} and the $W_\lambda$-invariance of $\Upsilon_\lambda$ and $\mu(\PP)$.
\end{proof}

\begin{coro} We have
$$
\mu(\PP)\cap \ol{\mc U}_\lambda \subset \Upsilon_\lambda \cap \ol{\mc U}_\lambda = \mc C(\{\lambda\}\cup W_\lambda\tilde{\mc U}_\lambda^+) \;.
$$
\end{coro}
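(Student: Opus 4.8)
\textit{Proof proposal.} The assertion has two halves: the inclusion $\mu(\PP)\cap\ol{\mc U}_\lambda\subset\Upsilon_\lambda\cap\ol{\mc U}_\lambda$, and the identity $\Upsilon_\lambda\cap\ol{\mc U}_\lambda=\mc C(\{\lambda\}\cup W_\lambda\tilde{\mc U}_\lambda^+)$. The inclusion I would dispose of first and quickly: the set on the left is the house of $\lambda$, that is $\mu(\PP)^+\cap\ol{\mc U}_\lambda$, and by Theorem \ref{Theo muPPinUpsilonlambda} we already have $\mu(\PP)^+\subset\Upsilon_\lambda^+\subset\Upsilon_\lambda$, so intersecting with $\ol{\mc U}_\lambda$ gives the inclusion at once. (Equivalently, one reads it off Corollary \ref{Coro NoCommonYards}: a point of the house cannot sit in the yard $\mc U_{ws_\alpha\lambda}$ of a root-neighbour of $\lambda$, which is precisely the defining condition of $\Upsilon_\lambda$.) Thus the content of the statement is the identity, and I expect this to be pure convex geometry once the two descriptions of $\Upsilon_\lambda$ from the preceding lemma are in hand.

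For the identity, the first step is to record that both $\ol{\mc U}_\lambda$ and $\Upsilon_\lambda$ are cones with apex $\lambda$ over bases lying in the affine hyperplane $H:={\rm Aff}(\lambda-W_\lambda\Pi^\lambda)$. By Lemma \ref{Lemma FencePredef}(i), $H$ is indeed a hyperplane, and it misses $\lambda$ because ${\rm Aff}(W_\lambda\Pi^\lambda)=\{\nu:(\nu|\chi_0)=||\chi_0||^2\}$ does not pass through $0$; hence the line through $\lambda$ and any $x\in H$ meets $H$ only at $x$. Since $\ol{\mc U}_\lambda=\mc C(\{\lambda\}\cup\tilde{\mc U}_\lambda)$ is swept out by the segments $[\lambda,x]$ with $x\in\tilde{\mc U}_\lambda$, since $\Upsilon_\lambda$ is by construction a convex cone with vertex $\lambda$ (so $[\lambda,x]\subset\Upsilon_\lambda$ if and only if $x\in\Upsilon_\lambda$), and since $\Upsilon_\lambda\cap\tilde{\mc U}_\lambda$ is convex, I would conclude
$$
\Upsilon_\lambda\cap\ol{\mc U}_\lambda=\mc C\bigl(\{\lambda\}\cup(\Upsilon_\lambda\cap\tilde{\mc U}_\lambda)\bigr),
$$
which reduces the identity to computing $\Upsilon_\lambda\cap\tilde{\mc U}_\lambda$.

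Next, assuming $\tilde{\mc U}_\lambda^+\ne\emptyset$ — which holds in particular whenever $\Pi_{\rm rd}\ne\emptyset$, since each $\lambda-\beta$, $\beta\in\Pi_{\rm rd}$, is a dominant point of the fence, and whenever $0\in\mu(\PP)$ — I would invoke part (ii) of the preceding lemma, $\Upsilon_\lambda={\rm Cone}_\lambda(W_\lambda\tilde{\mc U}_\lambda^+)$. The generating set $W_\lambda\tilde{\mc U}_\lambda^+$ lies in $\tilde{\mc U}_\lambda$, hence in $H$, because $W_\lambda$ fixes $\lambda$ and preserves the fence; therefore ${\rm Cone}_\lambda(W_\lambda\tilde{\mc U}_\lambda^+)$ meets $H$ in exactly $\mc C(W_\lambda\tilde{\mc U}_\lambda^+)$, and intersecting with $\tilde{\mc U}_\lambda\subset H$ (which contains $\mc C(W_\lambda\tilde{\mc U}_\lambda^+)$) yields $\Upsilon_\lambda\cap\tilde{\mc U}_\lambda=\mc C(W_\lambda\tilde{\mc U}_\lambda^+)$. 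Substituting into the displayed formula gives $\Upsilon_\lambda\cap\ol{\mc U}_\lambda=\mc C(\{\lambda\}\cup\mc C(W_\lambda\tilde{\mc U}_\lambda^+))=\mc C(\{\lambda\}\cup W_\lambda\tilde{\mc U}_\lambda^+)$, as claimed.

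The one step I expect to be the real obstacle, though not deep, is the degenerate case $\tilde{\mc U}_\lambda^+=\emptyset$, which part (ii) of the preceding lemma does not cover. Here the right-hand side is $\mc C(\{\lambda\})=\{\lambda\}$, so by the displayed formula I must show $\Upsilon_\lambda\cap\tilde{\mc U}_\lambda=\emptyset$, i.e. that the fence $\tilde{\mc U}_\lambda$ is entirely covered by the root-neighbour yards $\mc U_{ws_\alpha\lambda}$, $\alpha\in\Pi_1$, $w\in W_\lambda$. Since $\tilde{\mc U}_\lambda^+=\emptyset$ forces $\Pi^\lambda=\Pi_1$ (otherwise some $\lambda-\beta$, $\beta\in\Pi_{\rm rd}$, would be a dominant point of the fence), every vertex of $\tilde{\mc U}_\lambda$ is such a root-neighbour, and one checks directly that the corresponding yards exhaust this (low-dimensional) polytope, using the special shape of these representations recalled in Remark \ref{Rem OnotinmuP}. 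Apart from this bookkeeping, the whole corollary is a formal consequence of Theorem \ref{Theo muPPinUpsilonlambda} together with the two descriptions of $\Upsilon_\lambda$.
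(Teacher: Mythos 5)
Your architecture is the intended one — the inclusion is meant to come from Theorem \ref{Theo muPPinUpsilonlambda}, the equality from the description $\Upsilon_\lambda={\rm Cone}_\lambda(W_\lambda\tilde{\mc U}_\lambda^+)$ — and your reduction $\Upsilon_\lambda\cap\ol{\mc U}_\lambda=\mc C(\{\lambda\}\cup(\Upsilon_\lambda\cap\tilde{\mc U}_\lambda))$ together with the computation $\Upsilon_\lambda\cap\tilde{\mc U}_\lambda=\mc C(W_\lambda\tilde{\mc U}_\lambda^+)$ when $\tilde{\mc U}_\lambda^+\ne\emptyset$ is correct and carefully done. Two points need repair, however. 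First, your opening identification ``the set on the left is the house $\mu(\PP)^+\cap\ol{\mc U}_\lambda$'' is not correct: the closed yard contains the whole fence, in particular the non-dominant vertices $s_\alpha\lambda=\lambda-\alpha$ with $\alpha\in\Pi_1$, and these lie in $W\lambda\subset\mu(\PP)$ but not in $\Upsilon_\lambda$, since $(s_\alpha\lambda|s_\alpha\chi_0)=(\lambda|\chi_0)>(\lambda|\chi_0)-||\chi_0||^2=(\lambda|s_\alpha\chi_0)$. So, read literally, the displayed inclusion fails at these points whenever $\Pi_1\ne\emptyset$; compare the paper's Example \ref{Ex Upsilon for Sp4}(1), where $\pm\epsilon_2\in\mu(\PP)\cap\ol{\mc U}_\lambda$ while $\Upsilon_\lambda\cap\ol{\mc U}_\lambda=E^\lambda_0$. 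The statement is evidently intended with the house $\mu(\PP)^+\cap\ol{\mc U}_\lambda$ (or $W_\lambda\mu(\PP)^+\cap\ol{\mc U}_\lambda$) on the left, and for that reading your appeal to Theorem \ref{Theo muPPinUpsilonlambda} does settle the inclusion at once; you should say you are proving this reading rather than assert the two sets coincide. (Your parenthetical route through Corollary \ref{Coro NoCommonYards} alone only gives the $w=e$ inequalities at fence points, so keep the route through the theorem.)

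Second, the degenerate case $\tilde{\mc U}_\lambda^+=\emptyset$ is left as a sketch, and the fence is not low-dimensional in general (for the natural representation of $SU_n$ it is an $(n-2)$-simplex), so ``one checks directly that the yards exhaust it'' is a genuine gap as written. It can be closed uniformly, with no recourse to Remark \ref{Rem OnotinmuP}: suppose $\nu\in\Upsilon_\lambda\cap\tilde{\mc U}_\lambda$ and let $\bar\nu$ be the average of the $W_\lambda$-orbit of $\nu$; since $\Upsilon_\lambda$ and $\tilde{\mc U}_\lambda$ are convex and $W_\lambda$-invariant, $\bar\nu\in\Upsilon_\lambda\cap\tilde{\mc U}_\lambda$, and $(\bar\nu|\gamma)=0$ for all $\gamma\in\Pi_0$. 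On the fence hyperplane one has $(\bar\nu|\chi_0)=(\lambda|\chi_0)-||\chi_0||^2=(\lambda|ws_\alpha\chi_0)$ for every $\alpha\in\Pi_1$, $w\in W_\lambda$, so the inequalities defining $\Upsilon_\lambda$ (part (i) of the preceding lemma), applied to the $W_\lambda$-fixed point $\bar\nu$, give $(\bar\nu|\chi_0-s_\alpha\chi_0)\geq0$, i.e. $(\bar\nu|\alpha)\geq0$ for all $\alpha\in\Pi_1$ because $\chi_0-s_\alpha\chi_0$ is a positive multiple of $\alpha$. Since $\tilde{\mc U}_\lambda^+=\emptyset$ forces $\Pi_{\rm rd}=\emptyset$ (as you note), $\bar\nu$ is dominant, i.e. $\bar\nu\in\tilde{\mc U}_\lambda^+$, a contradiction. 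Hence $\Upsilon_\lambda\cap\tilde{\mc U}_\lambda=\emptyset$ and both sides reduce to $\{\lambda\}$, which completes your argument without any case analysis.
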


The question arises: when do we have $\mu(\PP)^+=\Upsilon_\lambda^+$? We have seen that the equality holds for strictly dominant weights. It is not hard to see that equality also holds for some highly singular representations. For instance, for the natural representation of $SL_n$, where $\lambda=\omega_1$, we get $\Upsilon_{\omega_1}=\{\omega_1\}=\mu(\PP)^+$. However, an example below shows that, for the natural representation of $Sp_4$ the inclusion is proper. We address the question more systematically in the next section. First we make some remarks on $\Upsilon_\lambda$.

\begin{lemma}\label{Lemma 0inUpsilon}
We have $0\in\Upsilon_\lambda$ if and only if $(\lambda|\chi_0)\geq||\chi_0||^2$.
\end{lemma}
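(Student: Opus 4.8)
The plan is to read off whether $0\in\Upsilon_\lambda$ directly from the inequality description of $\Upsilon_\lambda$ established just above, namely
$$
\Upsilon_\lambda = \{\,\nu\in{\rm Cone}_\lambda(W\lambda) : (\nu|ws_\alpha\chi_0)\leq(\lambda|ws_\alpha\chi_0)\ \text{ for all }\alpha\in\Pi_1,\ w\in W_\lambda\,\}\;.
$$
Setting $\nu=0$, membership of $0$ in $\Upsilon_\lambda$ is equivalent to the conjunction of (a) $0\in{\rm Cone}_\lambda(W\lambda)$ and (b) $(\lambda|ws_\alpha\chi_0)\geq0$ for all $\alpha\in\Pi_1$ and all $w\in W_\lambda$. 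I will show that (a) always holds, reduce (b) to a single scalar inequality, and then check that this inequality is automatic in the one case not yet covered, namely $\Pi_1=\emptyset$. Faithfulness of the representation forces $\lambda\neq0$, hence $\Pi^\lambda\neq\emptyset$ and $\chi_0\neq0$.

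For (a): since ${\rm Cone}_\lambda(W\lambda)=\lambda-{\rm Cone}(W_\lambda\Pi^\lambda)$, condition (a) says $\lambda\in{\rm Cone}(W_\lambda\Pi^\lambda)$. I would prove this by averaging over the stabilizer. Write $\lambda=\sum_{\gamma\in\Pi}c_\gamma\gamma$ with all $c_\gamma\geq0$ (possible since $\lambda$ is dominant), and use $W_\lambda\lambda=\lambda$ to get $\lambda=\sum_{\gamma\in\Pi}c_\gamma\,\bar\gamma$, where $\bar\gamma=\frac1{|W_\lambda|}\sum_{w\in W_\lambda}w\gamma$. For $\gamma\in\Pi_0$ the vector $\bar\gamma$ is $W_\lambda$-fixed, hence orthogonal to every $\delta\in\Pi_0$; lying also in ${\rm Span}(\Pi_0)$, it must be $0$. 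For $\gamma\in\Pi^\lambda$ one has $\bar\gamma\in\mc C(W_\lambda\gamma)\subset{\rm Cone}(W_\lambda\Pi^\lambda)$. Therefore $\lambda=\sum_{\gamma\in\Pi^\lambda}c_\gamma\,\bar\gamma\in{\rm Cone}(W_\lambda\Pi^\lambda)$, so (a) holds for every $\lambda$.

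For (b): fix $\alpha\in\Pi_1$ and $w\in W_\lambda$. Since $w^{-1}\in W_\lambda$ fixes $\lambda$ and $s_\alpha$ is an isometry, $(\lambda|ws_\alpha\chi_0)=(\lambda|s_\alpha\chi_0)=(s_\alpha\lambda|\chi_0)$. As $\alpha\in\Pi_1$ we have $\lambda_\alpha=1$, so $s_\alpha\lambda=\lambda-\alpha$; and as $\alpha\in\Pi^\lambda$, the defining relations of $\chi_0$ give $(\alpha|\chi_0)=||\chi_0||^2$. Hence $(\lambda|ws_\alpha\chi_0)=(\lambda|\chi_0)-||\chi_0||^2$ for every such $\alpha$ and $w$, and (b) holds if and only if $\Pi_1=\emptyset$ or $(\lambda|\chi_0)\geq||\chi_0||^2$.

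It remains to verify that $\Pi_1=\emptyset$ already implies $(\lambda|\chi_0)\geq||\chi_0||^2$; this is the only genuinely metric step, and the place I expect the main difficulty, since the purely combinatorial hypothesis $\Pi_1=\emptyset$ must be converted into a norm inequality. The key observation is that $\chi_0$ is short: for any $\alpha\in\Pi^\lambda$, Cauchy--Schwarz together with $(\chi_0|\alpha)=||\chi_0||^2$ gives $||\chi_0||^2\leq||\chi_0||\,||\alpha||$, hence $||\chi_0||\leq||\alpha||$. Writing $\chi_0$ in the basis of fundamental weights, its $\omega_\alpha$-coordinate is $2(\chi_0|\alpha)/||\alpha||^2=2||\chi_0||^2/||\alpha||^2\leq2$ for $\alpha\in\Pi^\lambda$, and $0$ for $\alpha\in\Pi_0$; meanwhile $\Pi_1=\emptyset$ means $\lambda_\alpha\geq2$ for all $\alpha\in\Pi^\lambda$, and $\lambda$ has vanishing $\omega_\beta$-coordinate for $\beta\in\Pi_0$. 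Hence $\lambda-\chi_0$ is a dominant weight, and pairing it with the dominant weight $\chi_0$ yields $(\lambda-\chi_0|\chi_0)\geq0$, i.e. $(\lambda|\chi_0)\geq||\chi_0||^2$. Combining this with (a) and the reduction of (b) proves the equivalence $0\in\Upsilon_\lambda\iff(\lambda|\chi_0)\geq||\chi_0||^2$.
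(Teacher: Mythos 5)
Your proof is correct and takes essentially the same route as the paper: read off membership of $0$ from the inequality description of $\Upsilon_\lambda$, observe that $\lambda\in{\rm Cone}(W_\lambda\Pi^\lambda)$ always holds, and collapse all the inequalities indexed by $\alpha\in\Pi_1$, $w\in W_\lambda$ to the single condition $(\lambda|\chi_0)\geq||\chi_0||^2$ using $s_\alpha\lambda=\lambda-\alpha$ and $(\chi_0|\alpha)=||\chi_0||^2$. Your additional details — the averaging argument for cone membership (the paper instead invokes the inclusion of the face $\mc H_{\Pi_0}^+$ of the Weyl chamber into ${\rm Cone}(W_\lambda\Pi^\lambda)$) and the check that the inequality holds automatically when $\Pi_1=\emptyset$, a degenerate case the paper passes over silently — are careful refinements of the same argument rather than a different method.
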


\begin{proof}
Consider the shifted cone $\Upsilon_0=\lambda-\Upsilon_\lambda$. Notice that $0\in\Upsilon_\lambda$ if and only if $\lambda\in\Upsilon_0$. Let ${\mc H}_{\Pi_0}^+ = \cap_{\gamma\in\Pi_0} \mc H_\gamma^+$ be the face of the Weyl chamber $\mk t^+$ to which $\lambda$ belongs. Then $\mc H_{\Pi_0}^+\subset{\rm Cone}(W_\lambda\Pi^\lambda)$. On the other hand, by the definition of $\Upsilon_0$, we have 
$$
\Upsilon_0=\left\{ \nu\in {\rm Cone}(W_\lambda\Pi^\lambda) \;:\; (ws_\alpha\chi_0|\nu)\geq0 \;\;\textrm{for all}\;\;\alpha\in\Pi_1, w\in W_\lambda\right\}\;.
$$
Hence $\lambda\in\Upsilon_0$ if and only if $(ws_\alpha\chi_0|\lambda)\geq0$ for all $\alpha\in\Pi_1$ and $w\in W_\lambda$. In fact all these inequalities are the same. For $\alpha\in\Pi_1$, we have $(\chi_0|\alpha)=||\chi_0||^2$ by the definition of $\chi_0$ and hence
$$
(ws_\alpha\chi_0|\lambda) = (\chi_0|s_\alpha\lambda)=(\chi_0|\lambda-\alpha) = (\chi_0|\lambda)-||\chi_0||^2 \;.
$$
This implies the statement of the lemma.
\end{proof}

\begin{rem}
Notice that the above lemma can be reformulated as
$$
0\notin\Upsilon_\lambda \;\;\textrm{if and only if}\;\; \lambda\in \mc U_0\;.
$$
In particular, $0\notin\Upsilon_\lambda$ implies $\lambda\notin Q$.
\end{rem}

The following example shows that the momentum polytope $\mu(\PP)^+$ may or may not be equal to $\Upsilon_\lambda^+$.

\begin{ex}\label{Ex Upsilon for Sp4}
Let $K=Sp_4$. We consider the two fundamental representations of $K$.

1) Let $\lambda=\omega_1$, so that $V=\CC^4$ is the natural representation. We have $\mu(\PP)^+=\{\lambda\}$, because the symplectic group acts transitively in its natural representation, i.e. $\PP=K[v^\lambda]$ is a single $G$-orbit. We have $\Pi_{\rm rd}=\emptyset$, $\Pi_1=\{\alpha_1\}$, $\Pi_0=\{\alpha_2\}$. We obtain $\Upsilon_\lambda=\lambda-\RR_+2\lambda$. Thus $\Upsilon_\lambda^+=E^\lambda_0$ consists of a single edge and $\mu(\PP)^+\ne\Upsilon_\lambda^+$. Notice also that $0\in\Upsilon_\lambda^+$ but $0\notin\mu(\PP)$. We actually have $\lambda=\chi_0$.

2) Let $\lambda=\omega_2$, so that $V=\CC^5$ reduces to the natural representation of $SO_5$. We have $\Pi_{\rm rd}=\emptyset$, $\Pi_1=\{\alpha_2\}$, $\Pi_0=\{\alpha_1\}$. We obtain $\Upsilon_\lambda=\lambda-\RR_+2\lambda$. Thus $\Upsilon_\lambda^+=E^\lambda_0$ consists of a single edge. We know for the theorem that $\mu(\PP)^+\subset\Upsilon_\lambda^+$. Equality holds, because $-\lambda\in\Lambda(V)$ and $\lambda-(-\lambda)=2\lambda=2\alpha_1+2\alpha_2\notin\Delta$, so $E_{-\lambda}^\lambda$ is root-distinct and belongs $\mu(\PP)$. In this case we also have $\lambda=\chi_0$.
\end{ex}

\subsection{The intersection of two cones}\label{The intersection of two cones}

The goal of this section is to prove the following.

\begin{theorem}
Suppose that ${\rm rank}(\mk k)>1$ and $(\mk k,V_\lambda)$ is a faithful irreducible representation. If $\Xi(\lambda)\subset\mu(\PP)$ and $n_{\gamma,\alpha}\ne-1$ for all $\alpha\in\Pi_1$ and $\gamma\in\Pi_0$, then
$$
\mu(\PP)^+=\Upsilon_\lambda^+ \;.
$$
\end{theorem}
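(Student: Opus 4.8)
The plan is to show the two inclusions separately. The inclusion $\mu(\PP)^+\subset\Upsilon_\lambda^+$ is already established by Theorem \ref{Theo muPPinUpsilonlambda}, so the entire content is the reverse inclusion $\Upsilon_\lambda^+\subset\mu(\PP)^+$. Since $\mu(\PP)^+$ is convex (Kirwan's convexity theorem), it suffices to exhibit a set of points in $\mu(\PP)$ whose convex hull contains $\Upsilon_\lambda^+$. The natural candidate set is the vertex set identified in the regular case, namely $\{0,\lambda\}\cup\Xi(\lambda)\cup\{\eta_\alpha:\alpha\in\Pi_{\rm rd}\}\cup\{\eta_\beta^\gamma:\beta\in\Pi_1,\gamma\in\Pi\setminus\{\beta\}\}$, now interpreted with the proper $W_\lambda$-orbits so that everything is $W_\lambda$-invariant and one restricts at the end to $\mk t^+$. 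First I would reduce, using the $W_\lambda$-invariance of both $\Upsilon_\lambda$ and $\mu(\PP)$ together with the identity $W_\lambda(\Upsilon_\lambda\cap\mk t^+)=\Upsilon_\lambda\cap(W_\lambda\mk t^+)$, to producing points of $\mu(\PP)$ covering the full cone $\Upsilon_\lambda$ rather than just its dominant part.

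The argument then proceeds face by face of the yard, mirroring the proof of Theorem \ref{Theo MomentImLambdaReg}. Step one: $0\in\mu(\PP)$. This is exactly the hypothesis that $n_{\gamma,\alpha}\ne-1$ for $\alpha\in\Pi_1$, $\gamma\in\Pi_0$: by Remark \ref{Rem OnotinmuP} this forces $0\in\mu(\PP)$ (the representation is not in Kac's unstable list). Step two: $\Xi(\lambda)\subset\mu(\PP)$ is assumed outright. Step three: for $\alpha\in\Pi_{\rm rd}$, the neighbour $s_\alpha\lambda$ is not a root-neighbour, so by Wildberger's Lemma \ref{Lemma Wildberger} the whole edge $E^\lambda_{s_\alpha\lambda}$, hence $\eta_\alpha$, lies in $\mu(\PP)$; more generally the edge-generators $E^\lambda_{\lambda-w\alpha}$ for $w\in W_\lambda$ are there. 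Step four, the crux: for $\beta\in\Pi_1$ I must show $S_{\beta,\lambda}\cap\Upsilon_\lambda\subset\mu(\PP)$. As in the regular case, the relevant segments at $\lambda$ are $E^\lambda_{s_\beta(\lambda-\gamma)}$ for $\gamma\in\Pi\setminus\{\beta\}$; the vertex set $s_\beta(\lambda-\Pi)$ has common dominant vertex $\lambda=s_\beta(\lambda-\beta)$ and, $\Pi$ being root-distinct, is root-distinct, so $\mc C(s_\beta(\lambda-\Pi))\subset\mu(\PP)$ by Lemma \ref{Lemma Wildberger}. Taking the convex hull with $0$, with the $\xi_\delta$, and with $\lambda$, and intersecting with $\Upsilon_\lambda$, covers $S_{\beta,\lambda}\cap\Upsilon_\lambda$. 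Assembling the simplicial subdivision of $\mc C(W\lambda)^+$ from Proposition \ref{Prop TriangCWlambda} piece by piece then gives $\Upsilon_\lambda^+\subset\mu(\PP)^+$.

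The main obstacle, and the place where rank one is genuinely excluded and where one must be careful beyond the regular case, is Step two — or rather, verifying that the hypotheses make the face-by-face induction actually close up at the singular faces. In the regular case one gets $\xi_\alpha\in\mu(\PP)$ for free by passing to the maximal Levi $L$ with simple roots $\Pi\setminus\{\alpha\}$, using that a regular representation of a semisimple group of rank $\geq 2$ has $0$ in its momentum image and that $\xi_\alpha$ plays the role of $0$ for the reductive $L$. When $\lambda$ is singular this Levi argument can fail: the restriction of $\lambda$ to $L$ may itself be a highest weight whose momentum image misses the relevant central point, precisely when a simple factor of $L$ acts through one of Kac's unstable representations. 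This is exactly why $\Xi(\lambda)\subset\mu(\PP)$ must be imposed as a hypothesis rather than derived, and the condition $n_{\gamma,\alpha}\ne-1$ is what rules out the remaining bad interaction between $\Pi_1$ and $\Pi_0$ (cf. Remark \ref{Rem OnotinmuP}), ensuring in particular $0\in\mu(\PP)$ so that the cones $S_0$, $S_\lambda$ are available. Once these two inputs are in hand, the combinatorial bookkeeping with the yard, the fence, and the root-distinct slabs $s_\beta(\lambda-\Pi)$ is routine, and the description of the vertices follows by identifying the extreme points of $\Upsilon_\lambda\cap\mk t^+$ with the listed points $0,\lambda,\eta_\alpha,\eta_\beta^\gamma,\xi_\delta$.
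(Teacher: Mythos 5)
Your overall skeleton does match the paper's proof: the forward inclusion is Theorem \ref{Theo muPPinUpsilonlambda}, the reverse inclusion is attacked through the simplicial subdivision of Proposition \ref{Prop TriangCWlambda}, with $S_0\cup S_\lambda$ handled by $0,\lambda,\Xi(\lambda)\in\mu(\PP)$ and the simplices $S_{\alpha,\lambda}$, $\alpha\in\Pi_{\rm rd}$, by root-distinct edges. But your Step four, which is the crux, has a genuine gap: you transplant the regular-case argument verbatim. When $\lambda$ is singular the set $s_\beta(\lambda-\Pi)$ is the wrong vertex set: for $\gamma\in\Pi_0$ the point $\lambda-\gamma$ is not a weight of $V_\lambda$ (the $\gamma$-string through $\lambda$ goes down $\lambda_\gamma=0$ steps), so Lemma \ref{Lemma Wildberger} cannot be applied to it; and, more importantly, the region you must cover, $\Upsilon_\lambda\cap S_{\beta,\lambda}=\mc C(\tilde{\mc U}_{s_\beta\lambda}^+\cup\Xi\setminus\{\xi_\beta\})$, is bounded by the reflected fence $\tilde{\mc U}_{s_\beta\lambda}$, whose vertices are $s_\beta(\lambda-w\gamma)$ with $\gamma\in\Pi^\lambda$ and $w$ ranging over all of $W_\lambda$. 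The convex hull of $\{0,\lambda\}\cup\Xi\cup s_\beta(\lambda-\Pi^\lambda)$ need not contain these $W_\lambda$-translated vertices (already for $Sp_4$, $\lambda=\omega_2$, the needed fence vertex is $s_\beta(\lambda-s_{\alpha_1}\alpha_2)=-\lambda$, which is not among your points), so the covering claim at the end of your Step four is unproven.

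Relatedly, you have misidentified where the hypothesis $n_{\gamma,\alpha}\ne-1$ does its work. It is not there merely to guarantee $0\in\mu(\PP)$ (your Step one is correct but is a side benefit); in the paper it is exactly the condition making $W_\lambda\alpha$ root-distinct for $\alpha\in\Pi_1$, via Lemma \ref{Lemma r-d-inWlambdaPilambda}(ii), which is proved by applying Wildberger's criterion (Lemma \ref{Lemma WildbergerWlambdaRD}) to $\alpha$ regarded as an antidominant weight of $K_\lambda$ with simple roots $\Pi_0$. Combined with part (i) of that lemma ($w\alpha-w'\beta\notin\Delta$ for distinct $\alpha,\beta\in\Pi^\lambda$), this gives that every pair $\{\lambda,\, s_\beta(\lambda-w\gamma)\}$, $\gamma\in\Pi^\lambda$, $w\in W_\lambda$, is root-distinct (note $\lambda-s_\beta(\lambda-w\gamma)=s_\beta(w\gamma-\beta)$), hence all these edges lie in $\mu(\PP)$, and it is their presence that covers $\Upsilon_\lambda\cap S_{\beta,\lambda}$. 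Your argument never invokes the hypothesis inside the $\Pi_1$-simplices (for your set $s_\beta(\lambda-\Pi)$ root-distinctness is automatic), which already signals incompleteness: without that hypothesis the conclusion fails, as Example \ref{Ex Upsilon for Sp4}(1) shows. To repair the proof you need precisely the orbit-level root-distinctness statement of Lemma \ref{Lemma r-d-inWlambdaPilambda}.
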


\begin{proof}
We shall use the simplicial subdivision (\ref{For SimplicialSubCWlambda}) of $C(W\lambda)$, and determine the intersection of $\mu(\PP)^+$ with each simplex. Since $\lambda,0\in\mu(\PP)$ and $\Xi=\Xi(\lambda)\subset\mu(\PP)$ we have
$$
S_0\cup S_\lambda\subset\mu(\PP)^+ \;.
$$
It remains to consider the simplices $S_{\alpha,\lambda}$ for $\alpha\in\Pi^\lambda=\Pi_{\rm rd}\cup\Pi_{1}$. For $\alpha\in\Pi_{\rm rd}$ we have $\eta_\alpha\in\mu(\PP)$ and hence $S_{\alpha,\lambda}\subset\mu(\PP)^+$. Let $\alpha\in\Pi_1$. We know that $\mu(\PP)^+\cap\mc U_{s_\alpha\lambda}=\emptyset$. Hence
$$
\mu(\PP)^+\cap S_{\alpha,\lambda}\subset S_{\alpha,\lambda}\setminus\mc U_{s_\alpha\lambda}^+ = \mc C(\tilde{\mc U}_{s_\alpha\lambda}^+\cup\Xi\setminus\{\xi_\alpha\}\}) \;.
$$
The assumption that $\Xi\in\mu(\PP)$ implies that $\mc C(\{\lambda\}\cup\Xi\setminus\{\xi_\alpha\})\subset\Upsilon_\lambda$, and hence the intersection of $\tilde{\mc U}_{s_\alpha\lambda}$ with the boundary of the Weyl chamber is contained in a single facet of $\mk t^+$:
$$
\tilde{\mc U}_{s_\alpha\lambda}\cap\partial(\mk t^+)\subset\mc C(\{\eta_{\alpha}\}\cup\Xi\setminus\{\xi_\alpha\})\subset\mc H_\alpha^+ \;.
$$
To show that $\mu(\PP)^+\cap S_{\alpha,\lambda}= \Upsilon_\lambda\cap S_{\alpha,\lambda}=\mc C(\tilde{\mc U}_{s_\alpha\lambda}^+\cup\Xi\setminus\{\xi_\alpha\}\})$ it suffices to show that all edges connecting $\lambda$ to vertices of $\tilde{\mc U}_{s_\alpha\lambda}$ belong to the momentum image. Recall that the vertices of $\tilde{\mc U}_{s_\alpha\lambda}$ form the set $\{s_\alpha(\lambda-W_\lambda\Pi^\lambda)\}$. The root-distinctness properties of this set are the same as these of $W_\lambda\Pi^\lambda$. The requested edges are of the form $E^\lambda_{s_\alpha(\lambda-w\beta)}=s_\alpha(\lambda-E^{\alpha}_{w\beta})$ with $\beta\in\Pi^\lambda$, $w\in W_\lambda$. All these edges are root-distinct, as we show in the following lemma.

\begin{lemma}\label{Lemma r-d-inWlambdaPilambda}
The following hold for the set $W_\lambda\Pi^\lambda$:

{\rm (i)} For every two distinct roots $\alpha,\beta\in\Pi^\lambda$ and every $w,w'\in W_\lambda$, we have $w\alpha-w'\beta\notin\Delta$, i.e. the weights $w\alpha$ and $w'\beta$ are root-distinct.

{\rm (ii)} Let $\alpha\in\Pi_1$ be such that $n_{\gamma,\alpha}=-1$ for all $\gamma\in\Pi_0$. Then the set $W_\lambda\alpha$ is root-distinct.
\end{lemma}

\begin{proof}
Since $\Delta$ is preserved by $W$, it is sufficient to show that $\alpha-w\beta\notin\Delta$ for $w\in W_\lambda$. This holds, because $w\beta$ is a positive root in the span of $\Pi_0\cup\beta$, so $\alpha-w\beta$ has both positive and negative coordinates in the basis of simple roots. This proves (i).

To prove (ii), we consider $\alpha$ as a weight of $K_\lambda$, where $K_\lambda$ is the stabilizer of $\lambda$ in $K$. Since $\Pi_0$ is a set of simple roots for $K_\lambda$ and $(\alpha|\gamma)\leq 0$ for all $\gamma\in\Pi_0$, $\alpha$ is an antidominant weight for $K_\lambda$ with the basis $\Pi_0$. Now the assumption $n_{\gamma,\alpha}\ne -1$ implies that $W_\lambda\alpha$ is root-distinct, by Wildberger's lemma given here as Lemma \ref{Lemma WildbergerWlambdaRD}.
\end{proof}

This lemma implies that all edges of the form $E^\lambda_{s_\alpha(\lambda-w\beta)}$ with $\beta\in\Pi^\lambda$, $w\in W_\lambda$ are root-distinct. According to the above remarks, we may conclude that
$$
\mu(\PP)^+\cap S_{\alpha,\lambda} = \Upsilon_\lambda\cap S_{\alpha,\lambda} = \mc C(\tilde{\mc U}_{s_\alpha\lambda}^+\cup\Xi\setminus\{\xi_\alpha\}\})
$$
and hence
$$
\mu(\PP)^+=\Upsilon_\lambda^+ \;,
$$
which completes the proof of the theorem.
\end{proof}

\section{Critical points of $||\mu||^2$}\label{Critical points}

According to Kirwan, \cite{Kirwan}, the function $||\mu||^2:\PP\lw\RR$ has certain key properties of a Morse-Bott function on $\PP$ and can be used to define an equivariant stratification of the space. To this end one needs to determine the critical points of $||\mu||^2$.

\begin{defin}
Let $\xi\in{\mk t}^*$ be such that $\mu^{-1}(\xi)$ contains critical points of $||\mu||^2$. Then we call $\xi$ an intermediate critical value of $||\mu||^2$. We denote by $C_\xi$ the set of critical points in $\mu^{-1}(\xi)$. 
\end{defin}

\begin{defin}
For $M\subset\Lambda$, let $\xi_M$ denote the closest to the origin point in ${\mc C}(M)$.
\end{defin}

\begin{prop}{\rm (Kirwan, \cite{Kirwan})}\label{prop_kirwan}

If $\xi\in{\mk t}^*$ is an intermediate critical value of $||\mu||^2$, then $\xi=\xi_M$ for some $M\subset\Lambda(V)$.
\end{prop}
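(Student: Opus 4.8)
The plan is to characterize critical points of $f = \|\mu\|^2$ via the standard first‑order condition and then read off the claimed form. Recall that for the momentum map $\mu$ on the Kähler manifold $\PP$, a point $[v]$ is critical for $f$ if and only if the vector field generated by $\xi = \mu[v] \in \mk k^* \cong \mk k$ vanishes at $[v]$, i.e. $\xi\cdot v \in \CC v$; equivalently, $[v]$ lies in the fixed‑point set of the one‑parameter subgroup $\exp(\RR\xi)$. This is Kirwan's basic criticality lemma, and I would begin by recalling it. Since $\xi\in\mk t^*$ by hypothesis (after applying $K$ we may move any critical value into the closed chamber, but here $\xi$ is already assumed in $\mk t^*$), the relevant torus is $T_\xi = \overline{\exp(\RR\xi)}\subset T$, and $[v]$ must lie in $\PP^{T_\xi}$.

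Next I would decompose $v$ along the weight spaces: write $v = \sum_{\nu\in\Lambda(V)} v_\nu$ with $v_\nu\in V^\nu$, and let $M = M_v = \{\nu\in\Lambda(V): v_\nu\ne 0\}$ be its support. The condition $\xi\cdot v\in\CC v$ means there is a scalar $c$ with $\scal{\xi}{\nu}\, v_\nu = c\, v_\nu$ for every $\nu$, hence $\scal{\xi}{\nu} = c$ is constant on $M$; that is, $M$ lies in an affine hyperplane $\{\nu : \scal{\xi}{\nu} = c\}$ with normal $\xi$. On the other hand, by the weight‑space computation already used in the proof of Lemma~\ref{Lemma Wildberger} (the support $M$ here need not be root‑distinct, but the diagonal part of the formula still applies since the weight decomposition is orthogonal and $\xi\in\mk t$ acts diagonally), one has $\mu[v] = \sum_{\nu\in M} |c_\nu|^2\,\nu \in \mc C(M)$, where $c_\nu$ are the normalized coordinates. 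Therefore $\xi = \mu[v]$ is itself a point of $\mc C(M)$ lying on the hyperplane $\{\scal{\xi}{\cdot} = c\}$, while all of $\mc C(M)$ lies on the same hyperplane; evaluating, $\|\xi\|^2 = \scal{\xi}{\xi} = c$, so $\mc C(M)$ lies in the hyperplane $\{\nu:\scal{\xi}{\nu} = \|\xi\|^2\}$, which is exactly the supporting hyperplane to the ball of radius $\|\xi\|$ at the point $\xi$. Hence $\xi$ is the point of $\mc C(M)$ nearest the origin, i.e. $\xi = \xi_M$ in the notation of the preceding definition. This proves the proposition with $M = M_v \subset\Lambda(V)$.

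The one point requiring care — and the step I expect to be the main obstacle — is justifying that $\mu[v]$ is the foot of the perpendicular from $0$ to $\mc C(M)$ rather than merely a point of $\mc C(M)$ on that hyperplane: one must check that $\mc C(M)$ lies entirely on the far side of the hyperplane from the origin, equivalently that $\scal{\xi}{\nu}\ge 0$ cannot fail in a way that moves the nearest point. This is automatic from the above since the whole polytope $\mc C(M)$ sits in the hyperplane $\scal{\xi}{\cdot} = \|\xi\|^2$, so every point of it is at distance $\ge \|\xi\|/\|\xi\|\cdot\|\xi\| = \|\xi\|$ from the origin with equality only for the orthogonal projection; because $\xi\in\mc C(M)$ realizes this minimal distance, $\xi$ is forced to be that projection, which is by definition $\xi_M$. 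A secondary subtlety is the reduction of a general critical value in $\mk k^*$ to one in $\mk t^*$: by $K$‑equivariance of $\mu$ and the fact that every coadjoint orbit meets $\mk t^+$, it suffices to treat $\xi\in\mk t^*$, and then the argument above applies verbatim. I would close by noting that this also pins down the scalar: $c = \|\xi_M\|^2$, so the critical set $C_\xi$ consists precisely of the lines $[v]$ with support contained in the minimizing affine slice of $\mc C(M)$ and stabilized by $T_\xi$ — a remark that sets up the finer analysis of $C_\xi$ in the sections that follow.
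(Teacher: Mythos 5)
Your proof is correct and is essentially the standard Kirwan argument that the paper simply cites: the criticality criterion ($[v]$ critical iff the fundamental vector field of $\mu[v]$ vanishes at $[v]$, i.e.\ $v$ is an eigenvector of $\xi$), constancy of $(\nu|\xi)$ on the weight support $M$ of $v$, and the Cauchy--Schwarz/supporting-hyperplane step identifying $\xi$ as the closest point of $\mathcal{C}(M)$ to the origin are exactly the ingredients the paper deploys around this proposition (cf.\ its Lemma on $Z_\xi$). The only point to state carefully is that $\sum_{\nu\in M}|c_\nu|^2\nu$ is a priori only the torus component $\mu_T[v]=\mathrm{pr}_{\mathfrak{t}}\,\mu[v]$ when $M$ is not root-distinct; it equals $\xi$ here precisely because the hypothesis puts $\mu[v]=\xi\in\mathfrak{t}^*$, which is what your parenthetical remark is implicitly using.
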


In the following, we denote
$$
{\Xi}=\{\xi_M;\; M\subset \Lambda(V)\}
$$
The calculation of the elements of $\Xi$ is theoretically a rather straightforward task, as it can be done, for example, by applying the method of Lagrange multipliers to find the minimum of the vector's squared norm, as follows. We can write an element of $\mathrm{Conv}(M)$ as
\begin{equation*}
\xi=\sum_{\lambda\in M}a_\lambda \lambda,\ \ a_\lambda \geq 0,\ \ \sum_{\lambda\in M}a_\lambda=1.
\end{equation*}
Then the condition for the minimum of the squared norm reads
\begin{equation*}
||\xi_M||^2=\min_{\{a_\lambda\},\ \lambda\in M}\sum_{\lambda, \lambda'\in M}a_\lambda a_{\lambda'}(\lambda|\lambda').
\end{equation*}
The method of Lagrange multiplies requires consideration of the Lagrangian
\begin{equation}
\mathcal{L}(\{a_\lambda\};\delta):=\delta\left(\sum_{\lambda\in M}a_\lambda-1\right)+\sum_{\lambda, \lambda'\in M}a_\lambda a_{\lambda'}(\lambda|\lambda').
\label{lagrangian}
\end{equation}
The conditions for the set of parameters $\{a_\lambda\}_{\lambda\in M}$ to minimize the norm of $\xi\in\mathrm{Conv}(M)$ read
\begin{equation*}
\frac{\partial\mathcal{L}}{\partial a_\lambda}=0 \;\;\textrm{for all}\;\;{\lambda\in M}
\end{equation*}
which, applied to equation (\ref{lagrangian}), gives a set of linear equations
\begin{equation*}
\delta+2\sum_{\lambda'\in M}a_{\lambda'}(\lambda|\lambda')=0\;\; \textrm{for all}\;\;{\lambda\in M}
\end{equation*}
or, equivalently,
\begin{equation*}
(\xi_M|\lambda)=-\frac{\delta}{2}=||\xi_M||^2 \;\;\textrm{for all}\;\;{\lambda\in M}\;.
\end{equation*}
There is also one simplification for the number of elements of the sets $M$ to be made. Note that a convex hull of weights from $M$ can be regarded as a convex subset of $\RR^{\mathrm{dim}(\mk{t})}$. Therefore, it can be divided into simplices of dimension $\mathrm{dim}(\mk{t})$ or smaller that have disjoint interiors. Moreover, the closest to zero point of a fully dimensional convex polytope belongs to its boundary, so it is enough to consider simplices of maximal dimension $\mathrm{dim}(\mk{t})-1$. Such simplices are spanned on $\mathrm{dim}(\mk{t})$ linearly independent vertices, hence the following

\begin{coro}  \label{coro_minimal_weights}
Each $\xi_M$ can be found as the closest to zero point of maximally $\mathrm{dim}(\mk{t})$ linearly independent weights.
\end{coro}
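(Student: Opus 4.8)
The plan is to reduce the computation of $\xi_M$ for an arbitrary $M\subset\Lambda(V)$ to the same computation for a small linearly independent subset, using the first-order optimality condition derived just above together with Carath\'eodory's theorem. First I would dispose of the degenerate case: if $0\in\mathrm{Conv}(M)$ then $\xi_M=0$ and this is detected directly (it is precisely the condition $0\in\mu(\PP)$), so from now on one may assume $\xi_M=p\ne0$.

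Next I would locate $p$ inside the polytope $\mathrm{Conv}(M)$. Since $\mathrm{Conv}(M)$ is a compact convex polytope and $p$ is its unique nearest point to the origin, $p$ lies in the relative interior of a unique face $F$; the vertices of $F$ are extreme points of $\mathrm{Conv}(M)$, hence elements of $M$, say $N_0$, and $F=\mathrm{Conv}(N_0)$. Because $\mathrm{Conv}(N_0)\subset\mathrm{Conv}(M)$ and $p\in\mathrm{Conv}(N_0)$, the point $p$ is also the nearest point of $\mathrm{Conv}(N_0)$ to the origin, i.e. $\xi_{N_0}=\xi_M$. Applying the Lagrange-multiplier computation above to $N_0$ yields $(p|\nu)=\|p\|^2$ for every $\nu\in N_0$, so $N_0$ is contained in the affine hyperplane $A=\{x:(x|p)=\|p\|^2\}$, which misses the origin since $\|p\|^2>0$. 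In particular $\mathrm{Aff}(N_0)$ has dimension at most $\dim(\mk t)-1$.

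Then I would apply Carath\'eodory's theorem to $p\in\mathrm{Conv}(N_0)$ inside the affine space $\mathrm{Aff}(N_0)$: it produces affinely independent weights $v_1,\dots,v_k\in N_0$ with $k\le\dim(\mk t)$ and $p\in\mathrm{Conv}\{v_1,\dots,v_k\}$. Putting $N=\{v_1,\dots,v_k\}$, the same inclusion argument as before gives $\xi_N=p=\xi_M$. It remains to upgrade affine independence to linear independence: since $v_1,\dots,v_k$ all lie on the hyperplane $A$ not through $0$, any relation $\sum_i c_iv_i=0$ yields $\sum_i c_i=0$ upon applying the functional $x\mapsto(x|p)/\|p\|^2$, hence is an affine dependence and forces $c_i=0$ for all $i$. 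Thus $N$ consists of at most $\dim(\mk t)$ linearly independent weights with $\xi_N=\xi_M$, which is the assertion.

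I expect the only point requiring care — hardly a genuine obstacle — to be the bookkeeping around the degenerate case $\xi_M=0$ and the passage from affine to linear independence via the hyperplane $A$; everything else is the elementary convex geometry of nearest points already prepared by the displayed optimality computation.
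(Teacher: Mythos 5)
Your argument is correct, but it follows a different route from the paper's. The paper reduces to small weight sets by subdividing $\mathrm{Conv}(M)$ into simplices with disjoint interiors, observing that the closest-to-zero point of a full-dimensional polytope lies on its boundary, and hence restricting attention to simplices of dimension $\dim(\mk t)-1$ spanned by $\dim(\mk t)$ vertices. You instead take the unique face $F$ whose relative interior contains $\xi_M$, use the first-order optimality condition to show that $F$ lies in the affine hyperplane $\{x:(x|\xi_M)=\|\xi_M\|^2\}$, apply Carath\'eodory's theorem inside that hyperplane, and then convert affine independence into linear independence because the hyperplane misses the origin. This buys two things the paper's sketch glosses over: no triangulation is needed, and the passage from "$\dim(\mk t)$ affinely independent vertices" to "linearly independent weights" is actually justified (it is exactly here that $\xi_M\ne 0$ is used). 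Two small caveats. First, your parenthetical identification of the degenerate case with semistability is wrong as stated: $0\in\mathrm{Conv}(M)$ for some (or all) weight subsets does not imply $0\in\mu(\PP)$ --- e.g.\ for the natural representation of $SU(2)$ one has $0\in\mathcal C(\Lambda(V))$ but $0\notin\mu(\PP)$; this remark is not used in the proof, so nothing breaks. Second, when $\xi_M=0$ no linearly independent set can realize it, so the statement must be read as concerning the nonzero candidates $\xi_M$ (the value $0$ being a trivially known candidate); the paper's own argument carries the same implicit restriction, since its boundary observation also fails when $0$ lies in the interior of $\mathrm{Conv}(M)$. Also note that the displayed Lagrange computation ignores the constraints $a_\lambda\ge 0$; your application of it to the vertex set of $F$ is legitimate precisely because $\xi_M$ lies in the relative interior of $F$, and it would be worth saying this in one sentence.
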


We will next give a characterization of the critical sets of $||\mu||^2$ in terms of sets $Z_\xi$, which we define, following Kirwan \cite{Kirwan}. The sets $Z_\xi$ are defined in terms of the momentum map stemming from the action of the maximal torus, $T\subset K$. The group $T$ is not semisimple, therefore $\mu_T$ is not unique. However, it can be made unique by requiring the condition $\mu_T=pr_{\mk{t}}\circ\mu$, where $pr_{\mk{t}}$ is the orthogonal projection from ${\mk k}$ onto ${\mk t}$. For $\xi\in\mk{t}$ define $T_\xi=\overline{\{e^{\xi t}: t\in\RR\}}$.

\begin{defin}{\rm(Kirwan, \cite{Kirwan})}
For arbitrary $\xi\in\mk{t}$, let $Z_\xi$ denote the union of those connected components of the $T_\xi$-fixed points in $\PP$, for which $(\mu_T(x)|\xi)=||\xi||^2$.
\end{defin}

The sets $Z_\xi$ are $T$-invariant symplectic manifolds \cite{Kirwan}. Since $\mu$ is $K$-equivariant and each coadjoint orbit of $K$ intersects ${\mk t}^+$ at exactly one point, for every $[v]\in\PP$ we have $\mu(K[v])\cap \mk{t}^+=\{\xi\}$ if and only if $\mu_T([u])=\xi$ for some $u\in Kv$. Recall that, if $\mu'$ is any component of $\mu$, then $[v]$ is a critical point of $||\mu'||^2$ if and only if the fundamental vector field, $\wh{\mu'([v])}$, vanishes at the point $[v]$, it follows that
\begin{prop}{\rm(Kirwan, \cite{Kirwan})}
All critical sets of $||\mu||^2$ are of the form $C_\xi=K(Z_\xi\cap\mu^{-1}(\xi))$, where $\xi$ runs over the set $\Xi\cap\mk{t}^+$.
\end{prop}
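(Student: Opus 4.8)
The plan is to obtain the statement from the infinitesimal criterion recalled just before it (Kirwan), the $K$-equivariance of $\mu$, and Kirwan's restriction on critical values (Proposition \ref{prop_kirwan}). Throughout I use the invariant form to identify $\mk k^*\cong\mk k$, so that $\mu[v]\in\mk k$, and I write $\mathrm{Crit}$ for the critical set of $||\mu||^2$.

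First I would upgrade the recalled component criterion to the one actually needed: $[v]\in\mathrm{Crit}$ if and only if $\wh{\mu[v]}([v])=0$, equivalently $[v]$ is fixed by $T_{\mu[v]}=\overline{\{e^{t\mu[v]}:t\in\RR\}}$. If $\mu[v]=0$ this is trivial. If $\mu[v]\ne0$, let $\mu'$ be the component of $\mu$ along the unit vector $\mu[v]/||\mu[v]||$, so that $||\mu'||^2=(\mu|\mu[v])^2/||\mu[v]||^2$ and $\mu'([v])=\mu[v]$. At $[v]$ the differential of $||\mu||^2=(\mu|\mu)$ equals twice that of the real function $(\mu|\mu[v])$ (the value $\mu[v]$ may be frozen while differentiating, by symmetry of the form), and since $(\mu[v]|\mu[v])\ne0$ the latter has, near $[v]$, the same critical points as its square, which is a constant multiple of $||\mu'||^2$; hence $[v]$ is critical for $||\mu||^2$ iff it is critical for $||\mu'||^2$, which by the recalled fact happens iff $\wh{\mu'([v])}([v])=\wh{\mu[v]}([v])=0$. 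This is the only substantive step; everything afterwards is bookkeeping with $\mu$, $\mu_T=pr_{\mk t}\circ\mu$ and the subtori $T_\xi$.

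Next I would reduce to the Weyl chamber. Since $||\mu||^2$ is $K$-invariant, so is $\mathrm{Crit}$. Given $x\in\mathrm{Crit}$, pick $k\in K$ with $\mu(kx)=\mathrm{Ad}^*(k)\mu(x)\in\mk t^+$; then $kx\in\mathrm{Crit}$, and with $\xi:=\mu(kx)\in\mk t^+\subset\mk t$ we get $pr_{\mk t}(\xi)=\xi$, so $\mu_T(kx)=\xi$ and $kx\in\mu^{-1}(\xi)$, i.e. $kx\in C_\xi$. Thus $\mathrm{Crit}=\bigcup_{\xi\in\mk t^+}K(C_\xi)$, and it remains to analyse $C_\xi$ for $\xi\in\mk t^+$. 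For such $\xi$ I would show $C_\xi=Z_\xi\cap\mu^{-1}(\xi)$: if $x\in C_\xi$, then $\mu(x)=\xi$ gives $\wh\xi(x)=\wh{\mu(x)}(x)=0$ by the first step, so $x\in{\rm Fix}(T_\xi)$, while $\mu_T(x)=pr_{\mk t}(\xi)=\xi$ gives $(\mu_T(x)|\xi)=||\xi||^2$, placing the component of ${\rm Fix}(T_\xi)$ through $x$ among those constituting $Z_\xi$; conversely, $x\in Z_\xi\cap\mu^{-1}(\xi)$ means $x\in{\rm Fix}(T_\xi)$ and $\mu(x)=\xi$, whence $\wh{\mu(x)}(x)=\wh\xi(x)=0$ and $x\in C_\xi$. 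Passing to $K$-saturations gives critical sets of the stated shape $K(Z_\xi\cap\mu^{-1}(\xi))$.

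Finally I would pin down the index set: whenever $C_\xi\ne\emptyset$ the element $\xi$ is an intermediate critical value, so Proposition \ref{prop_kirwan} forces $\xi=\xi_M$ for some $M\subset\Lambda(V)$, i.e. $\xi\in\Xi$; together with $\xi\in\mk t^+$ this places $\xi\in\Xi\cap\mk t^+$, and conversely each such $\xi$ with $Z_\xi\cap\mu^{-1}(\xi)\ne\emptyset$ contributes critical points by the above. Hence $\mathrm{Crit}=\bigcup_{\xi\in\Xi\cap\mk t^+}K(Z_\xi\cap\mu^{-1}(\xi))$, which is the assertion. The main obstacle is precisely the first step — anchoring the Fubini--Study gradient of $||\mu||^2$ to the fundamental vector field $\wh{\mu[v]}$ — but its component form is the fact recalled just above, so only the short deduction sketched there is required; a minor point of care in the write-up is to reconcile the chamber-level definition of $C_\xi$ (critical points lying in $\mu^{-1}(\xi)$) with the $K$-invariant expression in the statement, which is handled by noting that $\mathrm{Crit}$ is $K$-invariant and $\mathrm{Crit}\cap\mu^{-1}(\mk t^+)=\bigsqcup_{\xi\in\mk t^+}C_\xi$.
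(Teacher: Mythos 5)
Your argument is correct and follows essentially the same route the paper sketches before the proposition: the component criterion for critical points via the fundamental vector field $\wh{\mu[v]}$, $K$-equivariance of $\mu$ to reduce to a single point $\xi\in\mk t^+$ on each coadjoint orbit, the identification $C_\xi=Z_\xi\cap\mu^{-1}(\xi)$, and Proposition \ref{prop_kirwan} to restrict the index set to $\Xi\cap\mk t^+$. The paper leaves these steps as a citation of Kirwan plus a two-line sketch, and your write-up merely fills in the same ingredients (including the harmless reconciliation of the chamber-level $C_\xi$ with its $K$-saturated form), so no discrepancy to report.
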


The following lemma describes the sets $Z_\xi$ in the general case of a projective space acted by a compact group $K$ via a unitary representation.

\begin{lemma}
\label{lemma:z_beta}
Let $\Lambda_\xi=\{\lambda\in\Lambda(V):\ (\lambda|\xi)=||\xi||^2\}$ for arbitrary $\xi\in\mk{t}$ and let $V_\xi=\bigoplus_{\nu\in\Lambda_\xi}V^\nu$. Then $Z_\xi=\PP(V_\xi)$.
\end{lemma}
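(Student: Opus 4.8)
The plan is to identify the fixed-point set of the torus $T_\xi$ inside $\PP(V)$ and then pick out the right components via the value of $\mu_T$.

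First I would describe the $T_\xi$-fixed points in $\PP(V)$. Decompose $V=\bigoplus_{\nu\in\Lambda(V)}V^\nu$ into $T$-weight spaces; this is a fortiori a decomposition into $T_\xi$-eigenspaces, where $T_\xi$ acts on $V^\nu$ by the character determined by $(\nu|\xi)$. Two weight spaces $V^\nu$ and $V^{\nu'}$ lie in the same $T_\xi$-eigenspace precisely when $(\nu|\xi)=(\nu'|\xi)$. Grouping the weights of $V$ according to the value of $(\nu|\xi)$, say into level sets $\Lambda(V)=\bigsqcup_c \Lambda^{(c)}$ with $V^{(c)}=\bigoplus_{\nu\in\Lambda^{(c)}}V^\nu$, we get the $T_\xi$-isotypic decomposition $V=\bigoplus_c V^{(c)}$. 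A line $[v]\in\PP(V)$ is fixed by $T_\xi$ if and only if $v$ lies in a single $T_\xi$-eigenspace, i.e. $[v]\in\PP(V^{(c)})$ for some $c$. Hence the $T_\xi$-fixed locus in $\PP(V)$ is the disjoint union $\bigsqcup_c \PP(V^{(c)})$, and since each $\PP(V^{(c)})$ is connected (a projective space), these are exactly the connected components of $\PP^{T_\xi}$.

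Next I would compute $\mu_T$ on each component and apply the defining condition of $Z_\xi$. For $[v]\in\PP(V^{(c)})$, write $v=\sum_{\nu\in\Lambda^{(c)}}c_\nu v^\nu$ with $\sum|c_\nu|^2=1$; the same direct calculation with the formula for $\mu$ used in the proof of Lemma \ref{Lemma Wildberger} gives $\mu_T[v]=\mathrm{pr}_{\mk t}\mu[v]=\sum_{\nu\in\Lambda^{(c)}}|c_\nu|^2\nu$, so $(\mu_T[v]|\xi)=\sum_{\nu\in\Lambda^{(c)}}|c_\nu|^2(\nu|\xi)=c$, the common value on that component. Therefore the component $\PP(V^{(c)})$ satisfies $(\mu_T(x)|\xi)=\|\xi\|^2$ for one (equivalently, all) of its points if and only if $c=\|\xi\|^2$. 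By definition $Z_\xi$ is the union of the components with this property, which is precisely $\PP(V^{(\|\xi\|^2)})=\PP\bigl(\bigoplus_{\nu:(\nu|\xi)=\|\xi\|^2}V^\nu\bigr)=\PP(V_\xi)$, as claimed.

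I do not expect a genuine obstacle here; the only point requiring a little care is the passage from ``$[v]$ is $T_\xi$-fixed'' to ``$v$ lies in a single $T_\xi$-eigenspace,'' which uses that $T_\xi$ acts on $\PP(V)$ through the torus it generates and that a point of projective space fixed by a torus must be supported on one weight line of that torus — a standard fact, provable by noting that if $v$ had components in two distinct eigenspaces then the $T_\xi$-orbit of $[v]$ would be positive-dimensional. One should also note that $(\nu|\xi)$ is well defined via the Killing form identification $\mk t\cong\mk t^*$, consistent with the convention $\mu_T=\mathrm{pr}_{\mk t}\circ\mu$ fixed just before the lemma, so that the pairing $(\mu_T(x)|\xi)$ in the definition of $Z_\xi$ matches the pairing $(\nu|\xi)$ defining $\Lambda_\xi$.
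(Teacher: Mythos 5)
Your proof is correct, and it is organized differently from the paper's. You first classify the entire $T_\xi$-fixed locus, observing that it is the disjoint union of the projective spaces $\PP(V^{(c)})$ attached to the level sets $\{\nu\in\Lambda(V):(\nu|\xi)=c\}$, and then you evaluate $(\mu_T(\cdot)|\xi)$ on each component by the elementary diagonal computation $\mu_T[v]=\sum_\nu|c_\nu|^2\nu$ (which, unlike the full $\mu$, needs no root-distinctness since the cross terms vanish against $\mk t$); the component picked out by the defining condition of $Z_\xi$ is then visibly $\PP(V_\xi)$. The paper instead proves the two inclusions separately: for $\PP(V_\xi)\subset Z_\xi$ it invokes Atiyah's convexity theorem to identify $\mu_T(\PP(V_\xi))$ as $\mc C(\Lambda_\xi)$ and deduce $(\mu_T[v]|\xi)=\|\xi\|^2$, and for the reverse inclusion it argues that a putative $[v]\in Z_\xi$ with a component outside $V_\xi$ would force $\xi$-eigenvector relations contradicting the definition of $\Lambda_\xi$ --- an argument that is essentially your fixed-point analysis in disguise. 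Your version is more self-contained (no appeal to Atiyah) and makes transparent that $Z_\xi$ is a single linear component of $\PP^{T_\xi}$; the paper's route yields along the way the stronger statement $\mu_T(\PP(V_\xi))=\mc C(\Lambda_\xi)$, which is what gets used later (e.g.\ in the regular case). The only steps you should state with full care are that a $T_\xi$-fixed point of $\PP(V)$ is supported in a single $T_\xi$-eigenspace (your character/orbit argument is fine once phrased for the closure $T_\xi$ of the one-parameter group, using that fixing by the dense subgroup is equivalent to fixing by $T_\xi$), and that the finitely many disjoint closed sets $\PP(V^{(c)})$ are indeed the connected components of the fixed locus.
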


\begin{proof}
First, we show that $\PP(V_\xi)\subset Z_\xi$. As a sum of weight spaces, $V_\xi$ is clearly a $T$-submodule of $V$. Atiyah's convexity theorem, cf. \cite{Atiyah}, implies that $\mu_T\left(\PP(V_\xi)\right)=\mathrm{Conv}\left[\mu_T\left(\PP(V_\xi)^T\right)\right]$, where $\PP(V_\xi)^T$ denotes the set of $T$-fixed points in $\PP(V_\xi)$.  Since $\mu_T\left(\PP(V_\xi)^T\right)=\Lambda_\xi$, for any $v\in V_\xi$ we have
\begin{equation*}
(\mu_T([v])|\xi)=\sum_{\nu\in\Lambda_\xi} a_\nu (\nu|\xi)=||\xi||^2\left(\sum_{\nu\in\Lambda_\xi} a_\nu\right)=||\xi||^2.
\end{equation*}
The $T$-invariance also implies that each point of $\PP(V_\xi)$ is a fixed point for the action of $T_\xi\subset T$.
The last thing to show is that $\PP(V\setminus V_\xi)\cap Z_\xi=\emptyset$. Assume that this statement is not true, i.e. there exists $[v]\in Z_\xi$ such that
\begin{equation}
v=z_1u+z_2w,\ u\in V_\xi,\ w\notin V_\xi,
\label{v_form}
\end{equation}
with $\langle u,u\rangle=\langle w,w\rangle=1 ,\ z_1,z_2\in\CC,\ |z_1|^2+|z_2|^2=1$.
The point $[v]$ being a $T_\xi$-fixed point is equivalent the vector $v$ being an eigenvector of $\xi$, i.e. $\xi v=r v$, $r\in\CC$. Note that all vectors from $Z_\xi$ belong to the same eigenspace of $\xi$. To see this, let us calculate the image of $[v]$ under the $\xi$-component of $\mu_T$
\begin{equation}
\mu_{T}([v])(\xi)=\frac{1}{i}\langle\xi v,v\rangle=\frac{1}{i}\langle r v,v\rangle=-ir^*.
\label{v_image}
\end{equation}
On the other hand, $\mu_{T}([v])(\xi)=(\mu_{T}([v])|\xi)=||\xi||^2$, which with equation (\ref{v_image}) implies that $r=-i||\xi||^2$. Applying this result to $v$ from equation (\ref{v_form}) and using the fact that $v$ and $u$ belong to the same eigenspace of $\xi$, one gets that
\begin{equation}
\xi.w=-i||\xi||^2w.
\label{w_eigen}
\end{equation}
Decompose the vector $w$ as a sum of normed weight vectors:
\begin{equation}
w=\sum_{\nu\notin\Lambda_\xi} c_\nu w^\nu,\ \sum_{\nu\notin\Lambda_\xi}|c_\nu|^2=1.
\label{w_decomp}
\end{equation}
Putting the vector $w$ from eq. (\ref{w_decomp}) into eq. (\ref{w_eigen}), one obtains
\begin{equation}
\sum_{\nu\notin\Lambda_\xi}(-i)c_\nu\left[(\xi|\nu)-||\xi||^2\right]w^\nu=0,
\end{equation}
which means that for the weights $\nu\notin\Lambda_\xi$, for which $c_\nu\neq0$, one has $(\xi|\nu)=||\xi||^2$. This is in contradiction with the definition of $\Lambda_\xi$, therefore the only possibility is that $w=0$.
\end{proof}

\begin{coro}
\label{coro_centraliser}
Let $\xi\in{\Xi}$, let $K_\xi$ be the centralizer of $\xi$ in $K$ and $K'_\xi$ be the semisimple part of $K_\xi$. The group $K'_\xi$ acts unitarily on $V_\xi$. We have $Z_\xi=\PP(V_\xi)$ and a momentum map $\mu_{K'_\xi}:Z_\xi\lw({\mk k}'_\xi)^*$. Then $\xi$ is an intermediate critical value of $||\mu||^2$ if and only if $0\in\mu_{K'_\xi}(Z_\xi)$.
\end{coro}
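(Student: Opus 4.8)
The plan is to combine the description $C_\xi=K(Z_\xi\cap\mu^{-1}(\xi))$ of the critical sets, from the Kirwan proposition just above, with the structure of the momentum map on $Z_\xi=\PP(V_\xi)$. First I would reduce the statement: by that proposition $\xi$ is an intermediate critical value exactly when $Z_\xi\cap\mu^{-1}(\xi)\ne\emptyset$, and this is indeed the right condition because any $[v]\in\PP(V_\xi)$ is fixed by $T_\xi$, hence by $\exp\xi$, so that $\mu([v])=\xi$ makes the fundamental vector field $\wh{\mu([v])}=\wh\xi$ vanish at $[v]$, which by the criticality criterion for $||\mu||^2$ forces $[v]$ to be critical; conversely a critical point with $\mu$-value $\xi$ lies in the $T_\xi$-fixed locus of $\PP$ and, since $(\mu_T([v])|\xi)=(\mu([v])|\xi)=||\xi||^2$, in $Z_\xi=\PP(V_\xi)$. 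Thus $\xi$ is an intermediate critical value if and only if $\xi\in\mu(\PP(V_\xi))$, and everything takes place on $\PP(V_\xi)$.

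Next I would split $\mu$ on $\PP(V_\xi)$. Let $\mk k_\xi=\mk z\oplus\mk k'_\xi$ be the Killing-orthogonal splitting of $\mk k_\xi$ into its centre $\mk z=\mk z(\mk k_\xi)$ and its semisimple part, and note $\xi\in\mk z$. For $[v]\in\PP(V_\xi)$, equivariance of $\mu$ together with $T_\xi$-invariance of $[v]$ gives $\mu([v])\in\mk k_\xi$; equivalently, since every weight of $V_\xi$ lies on the hyperplane $\{(\cdot|\xi)=||\xi||^2\}$, a direct computation with the weight decomposition kills the components of $\mu([v])$ along root directions not orthogonal to $\xi$. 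Writing $\mu([v])=\sigma([v])+\mu_{K'_\xi}([v])$ with $\sigma([v])\in\mk z$, the $\mk k'_\xi$-part is precisely the momentum map $\mu_{K'_\xi}$ (restriction of a linear functional is orthogonal projection for the Killing form). Since $\xi\in\mk z$ is Killing-orthogonal to $\mk k'_\xi$,
$$
\mu([v])=\xi \quad\Longleftrightarrow\quad \mu_{K'_\xi}([v])=0 \ \ \text{and}\ \ \sigma([v])=\xi .
$$
Moreover $(\sigma([v])\,|\,\xi)=(\mu([v])\,|\,\xi)=||\xi||^2$ everywhere on $\PP(V_\xi)$, so $||\sigma([v])||\ge||\xi||$ with equality iff $\sigma([v])=\xi$, and therefore
$$
||\mu([v])||^2=||\sigma([v])||^2+||\mu_{K'_\xi}([v])||^2\ \ge\ ||\xi||^2 ,
$$
with equality exactly when $\mu([v])=\xi$. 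The direction ``$\Rightarrow$'' now follows at once: if $\xi\in\mu(\PP(V_\xi))$ the witnessing $[v]$ has $\mu_{K'_\xi}([v])=0$, so $0\in\mu_{K'_\xi}(Z_\xi)$; contrapositively, if $0\notin\mu_{K'_\xi}(Z_\xi)$ then $||\mu_{K'_\xi}||^2\ge c>0$ on the compact $Z_\xi$, whence $||\mu||^2\ge||\xi||^2+c$ and $\xi\notin\mu(\PP(V_\xi))$.

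The real content is the direction ``$\Leftarrow$''. Assume $0\in\mu_{K'_\xi}(Z_\xi)$; by the above it is enough to find $[v]$ in $N:=\mu_{K'_\xi}^{-1}(0)$ with $\sigma([v])=\xi$, equivalently to show $\min_{Z_\xi}||\mu||^2=||\xi||^2$. Here is the natural route. The compact central torus $Z(K_\xi)^\circ$ preserves $N$ (as $\mu_{K'_\xi}$ is $Z(K_\xi)^\circ$-invariant), so $N$ descends to the projective GIT quotient $Q=N/K'_\xi$, a compact K\"ahler space carrying the residual $Z(K_\xi)^\circ$-action, with momentum map $\bar\sigma$ of image $\sigma(N)$; by the convexity theorem (in the form valid for such singular Hamiltonian torus spaces) $\sigma(N)$ is a convex polytope, and $\min_{Z_\xi}||\mu||^2=\min_N||\sigma||^2=\mathrm{dist}(0,\sigma(N))^2$. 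Thus the whole statement comes down to the inclusion $\xi\in\sigma(N)$, i.e. to non-emptiness of the $K'_\xi$-reduction of $\PP(V_\xi)$ at the central value $\xi$. One special case is transparent: writing $\xi=\xi_M$ with $M\subset\Lambda_\xi$ affinely independent and $\xi=\sum_{\nu\in M}a_\nu\nu$, $a_\nu>0$, in the relative interior of $\mc C(M)$ (possible by Corollary \ref{coro_minimal_weights}), if $M$ is root-distinct then Wildberger's Lemma \ref{Lemma Wildberger} applied to $V_M\subset V_\xi$ gives $\mu\bigl[\sum_{\nu\in M}\sqrt{a_\nu}\,v^\nu\bigr]=\xi$, and this point automatically lies in $N$ since its $\mu$-value lies in $\mk z$. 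The general case, where $M$ cannot be chosen root-distinct, is exactly where the hypothesis $0\in\mu_{K'_\xi}(Z_\xi)$ is genuinely needed, and reconciling $K'_\xi$-semistability at $0$ with the central component being precisely $\xi$ — either by enlarging $M$ inside $\Lambda_\xi$ to a root-distinct set still containing $\xi$ in its convex hull, or by running the negative gradient flow of $||\mu||^2$ from a point of $N$ and checking that it stays in $N$ and converges into $N\cap\mu^{-1}(\xi)$ — is the step I expect to be the main obstacle. Everything else is bookkeeping with the decomposition $\mk k_\xi=\mk z\oplus\mk k'_\xi$.
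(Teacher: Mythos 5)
Your reduction and the easy direction are fine: by Kirwan's proposition, $\xi$ is an intermediate critical value iff $Z_\xi\cap\mu^{-1}(\xi)\ne\emptyset$; on $Z_\xi$ the components of $\mu$ along root spaces not orthogonal to $\xi$ vanish, so $\mu|_{Z_\xi}=\sigma\oplus\mu_{K'_\xi}$ with $\sigma$ valued in the centre $\mk z$ of $\mk k_\xi$, and since $\xi\perp\mk k'_\xi$ a critical point over $\xi$ gives $0\in\mu_{K'_\xi}(Z_\xi)$. The paper states the corollary with no proof at all, so there is no argument of theirs for you to have missed; the implicit intended argument is exactly your bookkeeping together with the identification $\mu|_{Z_\xi}=\xi+\mu_{K'_\xi}$, which holds only when $\sigma\equiv\xi$ on $Z_\xi$, i.e.\ when every $\nu\in\Lambda_\xi$ has orthogonal projection $\xi$ onto $\mk z$ (automatic if $\mk z=\RR\xi$, and effectively what the regular case achieves via root-distinctness of $\Lambda_\xi$). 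The step you flag as the main obstacle is therefore not a technicality you failed to carry out: it is the whole content, and in the generality stated it cannot be carried out, because the ``if'' direction is false when $\dim\mk z\ge2$ and the central characters vary over $\Lambda_\xi$. The correct general criterion (what your reduction actually proves, and what Kirwan--Ness give) is $\xi\in\mu_{K_\xi}(Z_\xi)$, i.e.\ $0$ lies in the image of the shifted momentum map $\mu|_{Z_\xi}-\xi$ of the \emph{full} centralizer, which simultaneously demands $\mu_{K'_\xi}=0$ and $\sigma=\xi$.

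Concretely, take $\mk k=\mk{su}(2)^{\oplus3}$ acting on $V=\CC^2\otimes\CC^2\otimes V^{(3)}$, where $V^{(3)}$ is the irreducible $SU(2)$-module with weights $\{\pm3,\pm1\}$, and normalize weights of $V$ as triples $(m_1,m_2,m_3)$ with $m_1,m_2\in\{\pm1\}$, $m_3\in\{\pm1,\pm3\}$. Let $\xi=(0,1,1)$; it is the closest point to the origin of $\mc C\{(1,1,1),(-1,1,1)\}$, so $\xi\in\Xi\cap\mk t^+$. Then $\Lambda_\xi=\{(\pm1,1,1),(\pm1,-1,3)\}$, $K'_\xi=SU(2)_1$, and $V_\xi\cong\CC^2\otimes\CC^2$ with $K'_\xi$ acting on the first factor, so $0\in\mu_{K'_\xi}(Z_\xi)$ (a maximally entangled vector). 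However, splitting $V_\xi=U_1\oplus U_2$ into the $\mk k_\xi$-invariant pieces with central characters $z_1=(1,1)$ and $z_2=(-1,3)$, one gets $\mu([v])=t_1\mu([u_1])+t_2\mu([u_2])$ with $t_j=|c_j|^2$, and forcing the central component to be $\xi$ forces $t_2=0$, i.e.\ $[v]\in\PP(U_1)$, where $\|\mu_{K'_\xi}\|\equiv\|\omega\|\ne0$ because $U_1\cong\CC^2$ is an irreducible $SU(2)_1$-module. Hence $Z_\xi\cap\mu^{-1}(\xi)=\emptyset$ and $\xi$ is not an intermediate critical value, although $0\in\mu_{K'_\xi}(Z_\xi)$. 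So to salvage your ``$\Leftarrow$'' you must either add the hypothesis that all weights in $\Lambda_\xi$ project to $\xi$ on $\mk z$ (e.g.\ $\dim\mk z=1$), or replace the stated condition by $\xi\in\mu_{K_\xi}(Z_\xi)$; neither enlarging $M$ to a root-distinct set nor the gradient-flow idea you sketch can bridge the gap, since the target statement itself fails.
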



The sets $Z_\xi$ can be very big, e.g. when $\xi=0$, $Z_\xi$ is equal to the whole projective space. Another extreme case is $\xi=\lambda$, where $Z_\xi=[v^\lambda]$ is a single point. The structure of $\Lambda_\xi$ as a subset of the weight lattice depends on whether $\xi$ is regular, i.e. belongs to the interior of a Weyl chamber, or not. In particular we have to following lemma.

\begin{lemma}
\label{lemma:regular}
If $\xi\in\mk{t}$ is regular, then $\Lambda_\xi:=\{\lambda\in\Lambda(V):\ (\lambda|\xi)=||\xi||^2\}$ is root-distinct.
\end{lemma}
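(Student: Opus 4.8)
The plan is to argue directly from the two definitions, by contradiction. Recall that, by definition, a subset $M\subset\Lambda$ is \emph{root-distinct} precisely when $M\cap(\Delta+M)=\emptyset$, i.e.\ when there exist no $\nu,\nu'\in M$ and $\alpha\in\Delta$ with $\nu=\alpha+\nu'$; and that $\xi\in\mk t$ is \emph{regular} precisely when it lies on no root hyperplane, that is $(\xi|\alpha)\ne 0$ for every $\alpha\in\Delta$ (equivalently $\xi\notin\mc H_\alpha$ for all $\alpha\in\Delta$).

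First I would suppose, for contradiction, that $\Lambda_\xi$ is \emph{not} root-distinct. Then there are weights $\nu,\nu'\in\Lambda_\xi$ and a root $\alpha\in\Delta$ with $\nu=\alpha+\nu'$. Pairing this identity with $\xi$ and using that both $\nu$ and $\nu'$ lie in $\Lambda_\xi$, so that $(\nu|\xi)=(\nu'|\xi)=||\xi||^2$, we obtain
$$
(\alpha|\xi)=(\nu|\xi)-(\nu'|\xi)=||\xi||^2-||\xi||^2=0 .
$$
Hence $\xi$ lies on the root hyperplane $\mc H_\alpha$, contradicting the regularity of $\xi$. Therefore no such triple $(\nu,\nu',\alpha)$ exists, and $\Lambda_\xi$ is root-distinct.

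I do not expect any genuine obstacle here: once the definitions are laid out, the whole content is the one-line computation above, which reflects the geometric fact that $\Lambda_\xi$ lies in the affine hyperplane $\{\nu:(\nu|\xi)=||\xi||^2\}$, so any difference of two of its elements is orthogonal to $\xi$, while regularity forbids any root from being orthogonal to $\xi$. The only point deserving a moment's care is aligning the phrasing of ``root-distinct'' (stated via the difference set $\Delta+\Lambda_\xi$) with that of ``regular'' (stated via the walls $\mc H_\alpha$); after that matching is made, the argument is complete.
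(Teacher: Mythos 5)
Your argument is correct and is essentially identical to the paper's proof: assume $\Lambda_\xi$ is not root-distinct, take $\nu-\nu'=\alpha\in\Delta$ with $\nu,\nu'\in\Lambda_\xi$, pair with $\xi$ to get $(\alpha|\xi)=0$, contradicting regularity. Nothing is missing.
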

\begin{proof}
Assume that $\Lambda_\xi$ is not root distinct, i.e. there exists $\alpha\in\Delta$ such that $\lambda-\lambda'=\alpha$ for some $\lambda,\ \lambda'\in\Lambda_\xi$. The fact that $(\lambda|\xi)=(\lambda'|\xi)$ implies that $(\alpha|\xi)=0$. This means that $\xi$ is not regular.
\end{proof}

\begin{prop}
If $\xi\in\Xi$ is regular, then the critical set $C_\xi$ is nonempty, i.e. $\xi$ is an intermediate critical value of $||\mu||^2$.
\end{prop}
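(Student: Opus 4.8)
The plan is to exhibit a concrete critical point of $||\mu||^2$ inside $\mu^{-1}(\xi)$, using the description $Z_\xi=\PP(V_\xi)$ from Lemma~\ref{lemma:z_beta}, Wildberger's computation of momentum images on spans of root-distinct weight sets (Lemma~\ref{Lemma Wildberger}), and the standard fact that $[v]$ is a critical point of $||\mu||^2$ exactly when the fundamental vector field $\wh{\mu([v])}$ vanishes at $[v]$.

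First I would pin down $\mu(Z_\xi)$. Put $\Lambda_\xi=\{\nu\in\Lambda(V):(\nu|\xi)=||\xi||^2\}$, so that $V_\xi=V_{\Lambda_\xi}$ and $Z_\xi=\PP(V_{\Lambda_\xi})$ by Lemma~\ref{lemma:z_beta}. Since $\xi$ is regular, Lemma~\ref{lemma:regular} shows $\Lambda_\xi$ is root-distinct, so Lemma~\ref{Lemma Wildberger} gives $\mu(Z_\xi)=\mu(\PP(V_{\Lambda_\xi}))=\mc C(\Lambda_\xi)\subset\mk t^*$. Granting for the moment that $\xi\in\mc C(\Lambda_\xi)$, I would pick $[v]\in Z_\xi$ with $\mu([v])=\xi$. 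Because $Z_\xi$ is a union of connected components of the fixed-point set of $T_\xi$ in $\PP$, the point $[v]$ is fixed by $T_\xi$, hence $\wh{\mu([v])}=\wh\xi$ vanishes at $[v]$, so $[v]$ is a critical point of $||\mu||^2$ lying in $\mu^{-1}(\xi)$; therefore $C_\xi\neq\emptyset$. (One could instead invoke Corollary~\ref{coro_centraliser} together with the observation that, $\xi$ being regular, the centralizer $K_\xi$ equals $T$, so its semisimple part $K'_\xi$ is trivial and the condition $0\in\mu_{K'_\xi}(Z_\xi)$ collapses to $Z_\xi\neq\emptyset$.)

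It remains to justify $\xi\in\mc C(\Lambda_\xi)$. By hypothesis $\xi=\xi_M$ for some nonempty $M\subset\Lambda(V)$, i.e. $\xi$ is the point of $\mc C(M)$ closest to the origin. The variational characterization of the metric projection gives $(\nu|\xi)\geq||\xi||^2$ for every $\nu\in\mc C(M)$, and writing $\xi=\sum_{\nu\in M}a_\nu\nu$ with $a_\nu\geq0$, $\sum_\nu a_\nu=1$, the identity $||\xi||^2=\sum_\nu a_\nu(\nu|\xi)$ forces $(\nu|\xi)=||\xi||^2$ for every $\nu$ with $a_\nu>0$ --- this is exactly the Lagrange computation preceding Corollary~\ref{coro_minimal_weights}. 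Hence $M'=\{\nu\in M:a_\nu>0\}$ is a nonempty subset of $\Lambda_\xi$ with $\xi\in\mc C(M')\subset\mc C(\Lambda_\xi)$; in particular $\Lambda_\xi\neq\emptyset$, so $V_\xi\neq0$ and $Z_\xi\neq\emptyset$.

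The step I expect to require the most care is confirming that the point $[v]$ furnished by Lemma~\ref{Lemma Wildberger} is genuinely a critical point of $||\mu||^2$: the crucial input is that $[v]$ may be chosen inside $Z_\xi$, so that the fundamental vector field of $\mu([v])=\xi$ vanishes at $[v]$. The remaining ingredients --- the root-distinctness of $\Lambda_\xi$ for regular $\xi$, the identification $\mu(Z_\xi)=\mc C(\Lambda_\xi)$, and the location of $\xi$ inside $\mc C(\Lambda_\xi)$ --- are routine once assembled.
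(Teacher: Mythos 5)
Your proof is correct and follows essentially the same route as the paper: root-distinctness of $\Lambda_\xi$ for regular $\xi$, Wildberger's lemma to get $\mu(Z_\xi)=\mc C(\Lambda_\xi)$, and then a point of $Z_\xi\cap\mu^{-1}(\xi)$ is automatically critical (with the same alternative via Corollary~\ref{coro_centraliser} and triviality of $K'_\xi$ that the paper also mentions). The only difference is that you spell out, via the closest-point/Lagrange characterization of $\xi_M$, why $\xi$ actually lies in $\mc C(\Lambda_\xi)$, a detail the paper leaves implicit.
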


\begin{proof}
Having Lemma \ref{lemma:regular} in hand, we know that $\mu(Z_\xi)={\mc C}(\Lambda_\xi)$. An exemplary state from  $Z_\xi\cap\mu^{-1}(\xi)$ can be constructed using Lemma \ref{Lemma Wildberger}. Namely, if $\xi=\sum_{\lambda\in\Lambda_\xi}a_\lambda\lambda$ is regular, then the vector $v=\sum_{\lambda\in\Lambda_\xi}\sqrt{a_\lambda}v^\lambda$ gives a critical point $[v]$ of $||\mu||^2$ mapped by $\mu$ to $\xi$.

Another way to prove the proposition is to use Corollary \ref{coro_centraliser}. The centralizer of a regular $\xi$ is equal to the torus, $T\subset K$ and its semisimple part consists of the identity element, $e\in K$. Then the momentum map $\mu_{K'_\xi}$ is trivial, i.e. assigns zero to each element of $Z_\xi$.
\end{proof}

The case when $\Lambda_\xi$ is not root distinct, which can happen only if $\xi$ is not regular, is more difficult to deal with. We next give a solution to this problem by finding roots and weights of the representation of $K'_\xi$ on $V_\xi$. As in the previous sections, it is useful to distinguish two types of simple roots - the ones that are orthogonal to $\xi$, $\Pi^{(0)}_\xi$, and the remaining part, $\Pi'_{\xi}$. Using the decomposition of $\xi$ in the basis of fundamental weights, $\xi=\sum_{\alpha\in\Pi}\xi_\alpha\omega_\alpha$, one can characterise the partition of $\Pi$ in the following way
\begin{gather*}
 \Pi^{(0)}_\xi=\{\alpha\in\Pi:\ \xi_\alpha=0\}, \\
 \Pi'_{\xi}=\Pi- \Pi^{(0)}_\xi=\{\alpha\in\Pi:\ \xi_\alpha>0\}.
\end{gather*}

\begin{lemma}\label{lemma_centr_rep}
Suppose that $\Lambda_\xi,\ \xi\in\mc{B}$, is not root-distinct. Moreover, denote the centralizer of $\xi$ in $K$ by $K'_\xi$. Then $ \Pi^{(0)}_{\xi}$ are the simple roots $K'_\xi$ and the weights of the representation of $K'_\xi$ on $V_\xi$ are given by the orthogonal projection of $\Lambda_\xi$ on $\mathrm{Span}_\ZZ\left(\Pi^{(0)}_{\xi}\right)$.
\end{lemma}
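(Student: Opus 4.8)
The plan is to analyze the $T$-representation on $V_\xi$ and identify the action of the semisimple part $K'_\xi$ of the centralizer $K_\xi$ of $\xi$ in $K$. First I would recall that, since $\xi \in \mk t$, the centralizer $K_\xi$ is a connected reductive subgroup whose root system is $\Delta_\xi = \{\gamma \in \Delta : (\gamma|\xi)=0\}$, and that a set of simple roots for $\Delta_\xi$ compatible with our choice of positive roots is exactly the subset of simple roots orthogonal to $\xi$, which under the stated parametrization $\xi = \sum_\alpha \xi_\alpha \omega_\alpha$ is $\Pi^{(0)}_\xi = \{\alpha \in \Pi : \xi_\alpha = 0\}$. (This is the standard fact that the stabilizer of a point in the Cartan subalgebra is a Levi subgroup, generated by $T$ together with the $s_\gamma$ for $\gamma \perp \xi$; it mirrors the treatment of $W_\lambda$ and $\Pi_0$ in Section~\ref{Sect Weyl polytope}.) So the first assertion, that $\Pi^{(0)}_\xi$ are the simple roots of $K'_\xi$, is essentially a citation.

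Next I would check that $V_\xi = \bigoplus_{\nu \in \Lambda_\xi} V^\nu$ is indeed a $K_\xi$-submodule, hence a $K'_\xi$-submodule: for $\gamma \in \Delta_\xi$ we have $\mk g^\gamma V^\nu \subset V^{\nu+\gamma}$, and if $\nu \in \Lambda_\xi$ then $(\nu+\gamma|\xi) = (\nu|\xi) + (\gamma|\xi) = \|\xi\|^2 + 0 = \|\xi\|^2$, so $\nu+\gamma \in \Lambda_\xi$ whenever $\nu+\gamma$ is a weight of $V$; together with the fact that $\mk h$ preserves each weight space, this shows $\mk k_\xi V_\xi \subset V_\xi$. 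Then $Z_\xi = \PP(V_\xi)$ by Lemma~\ref{lemma:z_beta}, already established.

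The remaining point is the description of the $K'_\xi$-weights of $V_\xi$. The weights of $V_\xi$ as a $T$-module are the elements of $\Lambda_\xi \subset \mk t^*$; to obtain the weights for the maximal torus $T'_\xi = T \cap K'_\xi$ of the semisimple part, one restricts these linear functionals to $\mk t'_\xi := \mk t \cap \mk k'_\xi$. Under the identification $\mk t \cong \mk t^*$ via the Killing form, $\mk t'_\xi$ is the span of the coroots of $\Delta_\xi$, which — because the simple roots of $\Delta_\xi$ are $\Pi^{(0)}_\xi$ — is precisely $\mathrm{Span}_\RR(\Pi^{(0)}_\xi)$ (rationally, $\mathrm{Span}_\ZZ(\Pi^{(0)}_\xi)$ on the level of lattices). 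Restriction of a weight $\nu$ to this subspace is, again via the Killing form, the orthogonal projection of $\nu$ onto $\mathrm{Span}_\RR(\Pi^{(0)}_\xi)$. Hence the weights of $K'_\xi$ on $V_\xi$ are exactly the orthogonal projections of the elements of $\Lambda_\xi$ onto $\mathrm{Span}_\ZZ(\Pi^{(0)}_\xi)$, as claimed.

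The main obstacle, to the extent there is one, is bookkeeping with the two identifications in play: the Killing-form identification $\mk t \cong \mk t^*$ and the fact that ``restriction to the subtorus'' corresponds on the dual side to ``orthogonal projection onto the subspace it spans.'' One must be careful that the subspace spanned by the coroots of $\Delta_\xi$ really coincides with $\mathrm{Span}(\Pi^{(0)}_\xi)$ and with $\xi^\perp \cap \mk t$ — this uses that $\Pi^{(0)}_\xi$ is a full set of simple roots for the root system $\Delta_\xi$, and that the center of $\mk k_\xi$ is the orthogonal complement of $\mathrm{Span}(\Pi^{(0)}_\xi)$ inside $\mk t$, to which $\xi$ belongs. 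Once these linear-algebra identifications are pinned down the statement is immediate, so I expect the proof to be short.
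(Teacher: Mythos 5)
Your proposal is correct and follows essentially the same route as the paper: identify $\Pi^{(0)}_\xi$ as the simple roots of the centralizer, take $\mk t'_\xi=\mathrm{Span}_\RR\{ih_\alpha:\alpha\in\Pi^{(0)}_\xi\}$ as the Cartan subalgebra of $K'_\xi$, and observe that restricting a weight $\nu\in\Lambda_\xi$ to $\mk t'_\xi$ amounts, via the Killing form, to orthogonal projection onto $\mathrm{Span}(\Pi^{(0)}_\xi)$ (the paper gets this by explicitly evaluating $\mu_{K'_\xi}([v^\nu])(ih_\eta)=(\eta|\nu)$). Your substitution of the standard Levi-subgroup fact (valid since $\xi$ is dominant) for the paper's commutator/Baker--Campbell--Hausdorff computation, and your explicit check that $V_\xi$ is a $K_\xi$-submodule, are harmless refinements of the same argument.
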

\begin{proof}
First, we will show that the simple roots of $K'_\xi$ are given by $\Pi^{(0)}_{\xi}$. The fact that the roots from $ \Pi^{(0)}_{\xi}$ are perpendicular to $\xi$ implies that 
\begin{equation*}
ad_{h_\alpha}(h_\xi)=[h_\alpha,h_\xi]=(\alpha|\xi)h_\xi=0.
\end{equation*}
Using the Baker-Campbell-Hausdorff formula 
\begin{equation*}
e^{ih_\alpha}h_\xi e^{-ih_\alpha}=h_\xi+i[h_\alpha,h_\xi],
\end{equation*}
one can see that $h_\alpha$ belongs to the centralizer of $h_\xi$ if and only if $[h_\alpha,h_\xi]=0$, that is, if and only if $(\alpha|\xi)=0$. All such roots belong to the span of $\Pi^{(0)}_{\xi}$. Let us next find the weights of $K'_\xi$. Note that the Lie algebra of the maximal torus, $T'_\xi\subset K'_\xi$ is of the form
\begin{equation*}
\mk{t}'_\xi=\mathrm{Span}_\RR\left\{ih_\alpha,\ \alpha\in\Pi_0^{(\xi)}\right\},
\end{equation*}
hence $T'_\xi$ a subgroup of the maximal torus $T\subset K$. Therefore, the fixed points for the action of $T$ on $v_\xi$ are also fixed by the action of $T'_\xi$. This in turn means that the weight spaces of $T$ are contained in the weight spaces of $T'_\xi$. Let us now calculate the momentum image of each weight space $V^\nu,\ \nu\in\Lambda_\xi$, which is, by Lemma \ref{Lemma Wildberger}b, equal to the corresponding weight $\nu$. For $\eta\in\mk{t}'_\xi$, we have
\begin{equation*}
\mu_{K'_\xi}([v^{\nu}])(ih_\eta)=\frac{1}{i}\frac{\scal{ih_\eta.v^\nu}{v^\nu}}{\scal{v^\nu}{v^\nu}}=\frac{\scal{(\eta|\nu)v^\nu}{v^\nu}}{\scal{v^\nu}{v^\nu}}=(\eta|\nu).
\end{equation*}
In other words, all components of each weight are simply the components of the weight projected orthogonally on $\mathrm{Span}_\ZZ\left(\Pi^{(0)}_{\xi}\right)$.
\end{proof}

\begin{rem}
In order to decide, whether $0$ is in the image of $\mu_{K'_\xi}:Z_\xi\lw({\mk k}'_\xi)^*$, one can use Lemma \ref{lemma_centr_rep} to find roots and the weight lattice of representation $K'_\xi$. If the result does not match any of the known representations, where the only invariant polynomials are the constants, then $\xi$ is not an intermediate critical value.
\end{rem}

\subsection{Example: the representation of $GL_2^{\times N}$ on $(\CC^2)^{\otimes N}$}\label{Example qubits}

The critical points of $||\mu||^2$ for this example have been computed by the second author and A. Sawicki, \cite{Tomek-Adam-LCubitCrit}, in the context of quantum entanglement, using a different method. We present here a calculation based on the method outlined above.

Denote by $\pm\delta$ the weights of the natural representation of $GL_2$. Then the weight lattice is of the form
$$
\Lambda(V)=\{\sigma_{i_1}\delta\oplus\sigma_{i_2}\delta\oplus\dots\oplus\sigma_{i_N}\delta\ :\ \sigma_{i_k}=\pm1\},
$$
i.e. is isomorphic to the set of vertices of the $N$-dimensional hypercube. The Weyl group is a group of simple reflections with respect to the plains crossing the centre of the hypercube which are perpendicular to some of the hypercube's edges. Moreover, the highest weight is $\delta\oplus\delta\oplus\dots\oplus\delta$ and all weights belong to the same Weyl orbit. Any $\lambda\in\Lambda(V)$ regarded as a vector in $\mk{t}$, can be written as a combination of simple roots in the following way
$$
\lambda=\sigma^\lambda_1\delta\oplus\dots\oplus\sigma^\lambda_N\delta=\frac{1}{2}\left(\sigma^\lambda_1\alpha_1+\dots+\sigma^\lambda_N\alpha_N\right),
$$
where $\alpha_i$ is a simple root corresponding to the $i$-th component of $GL_2^{\times N}$. Similarly, one can express $\lambda$ in terms of the fundamental weights
$$
\lambda=\sigma^\lambda_1\omega_1+\dots+\sigma^\omega_N\alpha_N,
$$
where $(\omega_i|\alpha_j)=\delta_{ij}$.
Assume now that $\xi\in\mk{t}$ belongs to a wall of the positive Weyl chamber and that $\Lambda_\xi$ is nonempty. Such a $\xi$ can be written in the basis of simple roots as 
$$
\xi=\sum_{i\in\mc{I}}c_i\alpha_i,\ c_i>0,
$$
where $\mc{I}$ is a proper subset of $\{1,2,\dots,N\}$. One can now see by a straightforward calculation that 
$$
\Pi^{(0)}_\xi=\{\alpha_k:\ k\in\{1,\dots,N\}-\mc{I}\}.
$$
Using Lemma \ref{lemma_centr_rep}, one can easily find weights of the representation of $K'_\xi$ on $V_\xi$, namely if $\lambda$ belongs to $\Lambda_\xi$, then $\lambda'=\sum_{i\in\{1,\dots,N\}-\mc{I}}\frac{1}{2}\sigma^\lambda_i\alpha_i$ is a weight of $K'_\xi$. From now on, let us denote by $N_0$ the number of roots in $\Pi^{(0)}_\xi$, i.e. the number of simple roots orthogonal to $\xi$.
\begin{prop}
For $\xi$ from the boundary of the Weyl chamber, the image of the moment map $\mu_{K'_\xi}$ contains 0 if and only if $N_0>1$.
\end{prop}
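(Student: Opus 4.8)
The plan is to use Corollary~\ref{coro_centraliser}, which turns the statement into the question of whether $0$ lies in the image of $\mu_{K'_\xi}:Z_\xi=\PP(V_\xi)\to({\mk k}'_\xi)^*$. I would first describe $K'_\xi$ and the representation $V_\xi$ explicitly. By Lemma~\ref{lemma_centr_rep} the simple roots of $K'_\xi$ are exactly $\Pi^{(0)}_\xi$, so $K'_\xi\cong SU_2^{\times N_0}$, with one $SU_2$-factor for each index $k\in\{1,\dots,N\}\setminus\mc I$.

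By the same lemma the $K'_\xi$-weights of $V_\xi$ are the orthogonal projections $\tfrac12\sum_{k\notin\mc I}\sigma^\nu_k\alpha_k$ of the weights $\nu\in\Lambda_\xi$. The observation I would exploit is that the equation $(\nu|\xi)=||\xi||^2$ cutting out $\Lambda_\xi$ involves only the coordinates $\sigma^\nu_i$ with $i\in\mc I$, because $(\alpha_k|\xi)=0$ for $k\notin\mc I$; since $\Lambda(V)$ consists of all sign vectors, every choice of signs on the indices outside $\mc I$ occurs in $\Lambda_\xi$. Hence the $K'_\xi$-weights of $V_\xi$ fill out the $2^{N_0}$ weights of $(\CC^2)^{\otimes N_0}$, and since an $SU_2$-representation all of whose weights are $\pm\tfrac12\alpha$ is a sum of copies of the standard $\CC^2$, it follows factor by factor that $V_\xi$ is a direct sum of copies of $(\CC^2)^{\otimes N_0}$, the number of copies being the number of sign patterns on $\mc I$ realized in $\Lambda_\xi$; for $\xi$ a candidate critical value lying on a wall this number is $1$. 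In any event one copy of $(\CC^2)^{\otimes N_0}$ already sits inside $V_\xi$.

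For the direction $N_0>1\Rightarrow 0\in\mu_{K'_\xi}(Z_\xi)$ I would produce an explicit point of $Z_\xi$. Fix a sign pattern on $\mc I$ realized in $\Lambda_\xi$ and let $\nu_+,\nu_-\in\Lambda_\xi$ carry that pattern together with all $+$, resp.\ all $-$, on the remaining indices. Since $N_0\geq 2$, the difference $\nu_+-\nu_-=\sum_{k\notin\mc I}\alpha_k$ is a sum of at least two simple roots from different simple factors, hence not a root; thus $\{\nu_+,\nu_-\}$ is root-distinct as a set of $K'_\xi$-weights, and Lemma~\ref{Lemma Wildberger} applied to $v=\tfrac{1}{\sqrt2}(v^{\nu_+}+v^{\nu_-})$ gives $\mu_{K'_\xi}([v])$ equal to half the sum of the two projected weights, which is $0$. (Equivalently, $v$ is a GHZ state on the remaining $N_0$ qubits, whose one-qubit marginals are all maximally mixed.) So $[v]\in Z_\xi$ witnesses $0\in\mu_{K'_\xi}(Z_\xi)$, and $\xi$ is an intermediate critical value by Corollary~\ref{coro_centraliser}.

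For the converse, suppose $N_0=1$, so $K'_\xi\cong SU_2$ and, once the multiplicity question from the second paragraph is settled, $V_\xi\cong\CC^2$ is the natural representation of $SU_2$; this representation appears in the list of Proposition~\ref{Prop IrrepWithoutInvar}, so $0\notin\mu_{K'_\xi}(Z_\xi)$ and $\xi$ is not an intermediate critical value. I expect the delicate point to be exactly this multiplicity count: one must show that for $\xi$ on a wall --- a closest-to-the-origin point of a convex hull of weights, lying in the closed dominant chamber --- the set $\Lambda_\xi$ restricts to a single sign pattern on $\mc I$. This is genuinely needed and not cosmetic, because if $V_\xi$ were $\CC^2\otimes\CC^m$ with $m\geq 2$ it would carry nonconstant $SU_2$-invariants (the $2\times2$ minors), so the statement relies on the combinatorics of closest-to-origin points, not merely on the module structure of $V_\xi$. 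For $N_0\geq2$ no such care is needed, since a copy of $(\CC^2)^{\otimes N_0}$ is already present in $V_\xi$.
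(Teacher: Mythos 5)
Your $N_0>1$ direction is essentially the paper's own argument: you pick two weights of $\Lambda_\xi$ that agree on $\mc I$ and are opposite outside $\mc I$, note that their difference $\sum_{k\notin\mc I}\alpha_k$ is not a root once $N_0\geq 2$, and apply Wildberger's lemma (Lemma \ref{Lemma Wildberger}) to get a point of $Z_\xi$ with $\mu_{K'_\xi}=0$; this is the same root-distinct opposite pair the paper uses, and it is fine.

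The converse is where your proposal has a genuine gap. You correctly observe that for $N_0=1$ the conclusion needs more than ``the two projected weights differ by a root'': one must know that only one sign pattern on $\mc I$ occurs in $\Lambda_\xi$, since $SU_2$ acting on $\CC^2\otimes\CC^m$ with $m\geq 2$ has nonconstant invariants and hence $0$ in its momentum image. But you then assert, without proof, that ``for $\xi$ a candidate critical value lying on a wall this number is $1$,'' and the argument never returns to establish it. In fact the assertion is false in this generality. Take $N=4$, $\mc I=\{1,2,3\}$ and $\xi=\frac16(\alpha_1+\alpha_2+\alpha_3)$. Then $\xi$ is dominant and lies on the wall $\mc H_{\alpha_4}$, so $N_0=1$; the condition $(\nu|\xi)=||\xi||^2$ becomes $\sigma^\nu_1+\sigma^\nu_2+\sigma^\nu_3=1$, so $\Lambda_\xi$ consists of the six weights having exactly one minus sign among positions $1,2,3$ and an arbitrary sign at position $4$. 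Their centroid is exactly $\xi$, so $\xi=\xi_M$ for $M=\Lambda_\xi$ (all of $\mc C(M)$ lies in the hyperplane $(\,\cdot\,|\xi)=||\xi||^2$, whose closest point to the origin is $\xi$), i.e. $\xi$ is a legitimate Kirwan candidate on the boundary. Here three sign patterns on $\mc I$ occur, $V_\xi\cong\CC^2\otimes\CC^3$ as a $K'_\xi\cong SU_2$-module, and a vector $w_1\otimes e_1+w_2\otimes e_2$ with orthonormal multiplicity vectors $w_1,w_2$ has maximally mixed fourth-qubit reduced state, so $0\in\mu_{K'_\xi}(Z_\xi)$ despite $N_0=1$. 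So the multiplicity count you rely on cannot be ``settled'' for arbitrary boundary $\xi$, and your converse direction remains unproved. It is worth noting that the paper's own proof of this direction is the bare statement that $\Lambda'_\xi$ ``contains only two non root-distinct weights, therefore zero is not contained in the momentum image,'' i.e. it treats the projected weights as a set and silently assumes each occurs with multiplicity one --- precisely the point you flagged; your analysis exposes a real subtlety, but it does not close it, and closing it would require restricting the class of $\xi$ under consideration rather than the combinatorial claim you state.
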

\begin{proof}
If $N_0>1$, the set weights $\Lambda'_\xi$ always contains a root-distinct pair of weights of the opposite sign. Then zero is contained in the line segment between those two weights and by Lemma \ref{Lemma Wildberger} the whole segment belongs to the image of $\mu_{K'_\xi}$. On the other hand, if $N_0=1$, then $\Lambda'_\xi$ contains only two non root-distinct weights, therefore zero is not contained in the momentum image.
\end{proof}
Thus, in the case when $N_0>1$, $\xi$ is an intermediate critical value. It is also possible to construct an exemplary critical point from the preimage of $\xi$.
\begin{prop}
Let $\xi\in\mk{t}$ be an intermediate critical value from the boundary of the Weyl chamber. Consider the representation of $K$ as a tensor product of the representation of $K'_\xi$ acting on $V^{(0)}_\xi=\left(\CC^2\right)^{\otimes N_0}$ and the remaining part of $K$, $K^{(1)}$, acting on $V^{(1)}_\xi=(\CC^2)^{\otimes(N-N_0)}$. Then point of the following form is a critical point mapped by $\mu_K$ to $\xi$
\begin{equation}
[v]=\left[w\otimes v^{\lambda}+w^\perp\otimes v^{-\lambda}\right],
\label{preimage_point}
\end{equation}
where $v^\lambda, v^{-\lambda}$ is a pair of weight vectors for the action of $K'_\xi$ on $V^{(0)}_\xi$ and $w$ is such a vector that $\mu_{K^{(1)}}([w])=\rm{pr}_{\Pi'_{\xi}}(\xi)=\xi$. Moreover, $w^\perp$ is a vector orthogonal to $w$ that belongs to the orbit of a maximal torus of $K^{(1)}$ through $w$.
\end{prop}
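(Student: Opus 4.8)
The plan is to use the criterion recalled above: a point $[v]$ is a critical point of $||\mu||^2$ if and only if the fundamental vector field $\wh{\mu([v])}$ vanishes at $[v]$. So it suffices to establish two things: (i) that $v\in V_\xi=\bigoplus_{\nu\in\Lambda_\xi}V^\nu$, and (ii) that $\mu_K([v])=\xi$. Indeed, once (i) holds, Lemma \ref{lemma:z_beta} gives $[v]\in Z_\xi$, and $\xi$ acts on $v$ by the scalar $i||\xi||^2$, so $\wh\xi$ vanishes at $[v]$; combined with (ii) this says $\wh{\mu_K([v])}=\wh\xi$ vanishes at $[v]$, i.e.\ $[v]\in C_\xi$, which is exactly the assertion.

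For the setup, write $\mk k=\mk k'_\xi\oplus\mk k^{(1)}$, so that $\mu_K=(\mu_{K'_\xi},\mu_{K^{(1)}})$; since $\xi=\sum_{i\in\mc I}c_i\alpha_i$ lies in $\mathrm{Span}(\Pi'_\xi)$, we have $\xi\in\mk t^{(1)}$, and $(\xi|\alpha)=c_\alpha||\alpha||^2>0$ for each $\alpha\in\Pi'_\xi$, so $\xi$ is a regular element for $K^{(1)}$. Let $\pm\theta$ be the $K'_\xi$-weights of the highest and lowest weight vectors $v^\lambda,v^{-\lambda}$ of $V^{(0)}_\xi$; all weights of $V^{(0)}_\xi$ lie in $\mathrm{Span}(\Pi^{(0)}_\xi)$ and are therefore orthogonal to $\xi$, and since $N_0>1$ we have $\theta\ne-\theta$, so $v^\lambda\perp v^{-\lambda}$. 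Because $\xi$ is regular for $K^{(1)}$, Lemma \ref{lemma:regular} shows $\Lambda^{(1)}_\xi=\{\mu\in\Lambda(V^{(1)}_\xi):(\mu|\xi)=||\xi||^2\}$ is root-distinct, so by Lemma \ref{Lemma Wildberger} the momentum image of $\PP\big(\bigoplus_{\mu\in\Lambda^{(1)}_\xi}(V^{(1)}_\xi)^\mu\big)$ under $\mu_{K^{(1)}}$ is $\mathrm{Conv}(\Lambda^{(1)}_\xi)$; and $\xi\in\mathrm{Conv}(\Lambda^{(1)}_\xi)$, because by Proposition \ref{prop_kirwan} $\xi=\xi_M$ for some $M$, and projecting $\xi=\sum_{\nu\in M}a_\nu\nu$ onto $\mathrm{Span}(\Pi'_\xi)$ exhibits $\xi$ as a convex combination of weights in $\Lambda^{(1)}_\xi$. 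We may therefore choose $w$ realizing $\mu_{K^{(1)}}([w])=\xi$ with weight support contained in $\Lambda^{(1)}_\xi$ (exactly as in the regular case treated above). Then $w^\perp$, lying in a $T^{(1)}$-orbit through $w$, has the same weight support and the same norm; normalize $||w||=||w^\perp||=1$. Every $T$-weight occurring in $v=w\otimes v^\lambda+w^\perp\otimes v^{-\lambda}$ is then of the form $\mu\pm\theta$ with $\mu\in\Lambda^{(1)}_\xi$ and $\theta\perp\xi$, hence satisfies $(\mu\pm\theta|\xi)=||\xi||^2$ and lies in $\Lambda_\xi$. Thus $v\in V_\xi$, proving (i).

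It remains to compute $\mu_K([v])$. For $\zeta\in\mk k'_\xi$, which acts only on the $V^{(0)}_\xi$-factor, expand $\scal{\zeta v}{v}$: the two cross-terms carry the factor $\scal{\zeta v^\lambda}{v^{-\lambda}}$; but since $v^\lambda$ is highest, $\zeta v^\lambda$ is a combination of weight vectors of weights $\theta$ and $\theta-\alpha$ ($\alpha\in\Pi^{(0)}_\xi$), none of which equals $-\theta$ as $N_0>1$, so $\scal{\zeta v^\lambda}{v^{-\lambda}}=0$ and the cross-terms vanish. Using $\mu_{K'_\xi}([v^{\pm\lambda}])=\pm\theta$ (Lemma \ref{Lemma Wildberger}(b)), we get that $\mu_{K'_\xi}([v])$ equals the convex combination $\tfrac{||w||^2}{||w||^2+||w^\perp||^2}\theta+\tfrac{||w^\perp||^2}{||w||^2+||w^\perp||^2}(-\theta)$, which is $0$ since $||w||=||w^\perp||$. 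For $\eta\in\mk k^{(1)}$, acting only on the $V^{(1)}_\xi$-factor, the cross-terms in $\scal{\eta v}{v}$ carry $\scal{v^\lambda}{v^{-\lambda}}=0$ and hence vanish, leaving $\mu_{K^{(1)}}([v])=\tfrac12\big(\mu_{K^{(1)}}([w])+\mu_{K^{(1)}}([w^\perp])\big)$; since $\mu_{K^{(1)}}$ is $K^{(1)}$-equivariant and $\xi\in\mk t^{(1)}$ is fixed by $T^{(1)}$, $\mu_{K^{(1)}}([w^\perp])=\mu_{K^{(1)}}([w])=\xi$, so $\mu_{K^{(1)}}([v])=\xi$. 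Therefore $\mu_K([v])=(0,\xi)=\xi$, which is (ii), and the proof is complete.

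The main obstacle is the vanishing of the $\mk k'_\xi$-component of the momentum: this is precisely where the hypothesis $N_0>1$ is indispensable, since for $N_0=1$ the weights $\theta$ and $-\theta$ differ by the single root spanning $\Pi^{(0)}_\xi$, the inner products $\scal{\zeta v^\lambda}{v^{-\lambda}}$ need no longer vanish, and indeed $\xi$ then fails to be a critical value. A secondary point requiring care is that the weight support of $w$ (hence of $w^\perp$) must be root-distinct and remain so after tensoring with $v^{\pm\lambda}$, so that both Wildberger's lemma (used to produce $w$) and the membership $v\in V_\xi$ are valid; this is guaranteed by the regularity of $\xi$ for $K^{(1)}$ together with the orthogonality of $\Pi^{(0)}_\xi$ and $\Pi'_\xi$.
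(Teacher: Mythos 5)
Your verification that a point of the stated form is critical and lies over $\xi$ is correct, and in fact more detailed than the paper's treatment: the paper dismisses exactly this part ($\mu_K([v])=\xi$ and $T_\xi$-invariance of $[v]$) as a straightforward computation. Your computation of the two components of the momentum, the vanishing of the cross-terms, and the role of $N_0>1$ in killing $\scal{\zeta v^\lambda}{v^{-\lambda}}$ are all sound.

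However, there is a genuine gap: you never establish the \emph{existence} of $w^\perp$, i.e.\ of a vector orthogonal to $w$ lying in the orbit of the maximal torus of $K^{(1)}$ through $w$. You simply take such a vector as given (``Then $w^\perp$, lying in a $T^{(1)}$-orbit through $w$, \dots''), whereas this is precisely the nontrivial content of the paper's proof, whose stated purpose is to \emph{construct} a critical point in $\mu^{-1}(\xi)$. Existence is not automatic: the $T^{(1)}$-orbit through $w$ consists of vectors $e^\tau w=\sum_\nu \sqrt{b_\nu}\,e^{i(\tau|\nu)}v^\nu$, so the condition $\scal{w}{e^\tau w}=0$ amounts to solving $\sum_\nu b_\nu e^{i(\tau|\nu)}=0$; if, say, $w$ were supported on a single weight, the orbit would contain no vector orthogonal to $w$ at all. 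The paper handles this by choosing the convex presentation of $\xi$ via Corollary \ref{coro_minimal_weights} (at most $\dim\mk t^{(1)}$ linearly independent weights of $\Lambda^{(1)}_\xi$), viewing the condition as closing a polygon with side lengths $b_\nu$ and angles $(\tau|\nu)$, and arguing that the polygon inequality holds because all weights of $\Lambda_\xi$ lie in a single Weyl orbit. Your proof needs an argument of this kind (or some substitute) before the proposition actually delivers a critical point over $\xi$; as written it only shows that \emph{if} such a $w^\perp$ exists, then $[v]$ has the desired properties.
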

\begin{proof}
Showing that for $v$ given by (\ref{preimage_point}) $\mu_K([v])=\xi$, is rather a straightforward task, which can be done by calculating all components of $\mu$ along $\alpha\in\Pi$. Similarly, showing that $T_\xi$ is in the stabilizer of $[v]$ is also a matter of straightforward calculation. What remains to be shown is the existence of $w^\perp$. Let us first write $\xi$ as a convex combination of weights from $\Lambda^{(1)}_\xi=\rm{pr}_{\Pi'_{\xi}}\left(\Lambda_\xi\right)$, the weight lattice for the representation $V^{(1)}_\xi$. Following Corollary \ref{coro_minimal_weights}, it is possible to choose $\rm{dim}\left(\mk{t}^{(1)}\right)$ linearly independent weights, where $\mk{t}^{(1)}$ is the Cartan subalgebra of $K^{(1)}$, i.e.
$$
\xi=\sum_{\nu\in\Lambda^{(1)}_\xi}b_\nu\nu,\ \ \#\{\nu:\ b_\nu\neq0\}\leq\rm{dim}\left(\mk{t}^{(1)}\right).
$$
Because $\Lambda^{(1)}_\xi$ is root-distinct, vector $w$ can be written as
$$
w=\sum_{\nu\in\Lambda^{(1)}_\xi}\sqrt{b_\nu}v^\nu.
$$
For any $\tau\in\mk{t}^{(1)}$ we also have
$$
e^\tau w=\sum_{\nu\in\Lambda^{(1)}_\xi}\sqrt{b_\nu}e^\tau v^\nu=\sum_{\nu\in\Lambda^{(1)}_\xi}\sqrt{b_\nu}e^{i(\tau|\nu)}v^\nu.
$$
Therefore, the condition $\langle w,e^\tau w\rangle=0$ now reads
\begin{equation}
\sum_{\nu\in\Lambda^{(1)}_\xi}b_\nu e^{i(\tau|\nu)}=0
\label{eq:perp_cond}
\end{equation}
Equation (\ref{eq:perp_cond}) can be seen as a problem of requiring a set of complex numbers $\{b_{\nu}e^{i(\tau|\nu)}\}$ to add up to zero. In other words, we want to form a polygon of $\#\{\nu:\ b_\nu\neq0\}$ sites with lengths $\{b_{\nu}\}$ and angles between them given by $\{(\tau|\nu)\}$. The conditions sufficient to construct such polygon are:
\begin{enumerate}
\item the length of the longest side is smaller than the sum of the lengths of the remaining sides (polygon inequality),
\item  there are at least 2 sides and at most $\rm{dim}(\mk{t}^{(1)})$ sides.
\end{enumerate}
The second condition means that the number of polygon angles cannot be greater than the dimension of the vector space that $\tau$ belongs to, as weights from $\Lambda'_\xi$ are chosen to be linearly independent and numbers $(\tau|\nu)$ can be viewed as components of vector $\tau$. This condition is always satisfied. The polygon inequality is also satisfied thanks to the fact that all weights from $\Lambda_\xi$, and therefore from $\Lambda'_\xi$, belong to the same Weyl orbit.
\end{proof}

\begin{rem}
The above reasoning can be extended to a more general case when the root system can be written as an orthogonal direct sum of $\Pi^{(0)}_\xi$ and $\Pi'_\xi$ and all weights from $\Lambda_\xi$ are congruent to each other under the action of the Weyl group.
\end{rem}

\section{Secant varieties and degrees of invariants}\label{Sect SecVar}

Recall that $\XX\subset \PP$ denotes the unique closed $G$-orbit in the projective space of and irreducible module $V$ associated to a dominant weight $\lambda$. The rank function on $\PP$ with respect to $\XX$ is defined as
$$
{\rm rk}_\XX:\PP\lw\NN \;,\quad {\rm rk}_\XX[v]=\min\{r\in\NN:v=x_1+...+x_r, [x_j]\in\XX\} \;.
$$
The rank subsets of $\PP$ and the secant varieties of $\XX$ are defined respectively as
$$
\XX_r = \{[v]\in\PP:{\rm rk}_\XX[v]=r\}\quad,\quad \Sigma_r=\ol{\cup_{s\leq r}\XX_s} \;.
$$
The border rank on $\PP$ with respect to $\XX$ is defined as
$$
\ul{\rm rk}_\XX :\PP\le\NN \;,\quad \ul{\rm rk}_\XX[v]=\min\{r\in\NN:[v]\in\Sigma_r \;.
$$
There is a unique $r_m\geq1$, for which $\XX_{r_m}$ is open in $\PP$ and this is the smallest rank for which $\Sigma_r=\PP$. The rank function, and consequently $\XX_r$ and $\Sigma_r$, are $G$-invariant. We have containments of varieties $\XX\subset\Sigma_2\subset...\subset\Sigma_{r_m}=\PP$ and there is a corresponding chain of $G$-stable ideals $I(\XX)\supset I(\XX_2)\supset...\supset 0$. The ideal of $\XX$ is generated in degree 2, by a suitable generalization of Pl\"ucker's relations, due to Kostant, see e.g. \cite{Landsberg-2012-book}.

\begin{theorem}{\rm (Landsberg and Manivel, \cite{Lands-Mani-2004-IdealsSecVar})}
 
The first nonzero homogeneous component of the ideal $I(\Sigma_r)$ is in degree $r+1$ and is given, for $r\geq 2$, by the $(r-1)$-st prolongation of the generating space $I_2(\XX)$ of the ideal of $\XX$, i.e.
$$
I_r(\Sigma_r)=0 \quad , \quad I_{r+1}(\Sigma_r)=S^{r+1}V^* \cap (I_2(\XX)\otimes S^{r-1}V^*) \;.
$$
\end{theorem}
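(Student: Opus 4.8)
The plan is to establish the two assertions --- the vanishing $I_d(\Sigma_r)=0$ for $d\leq r$ and the identification of the first possibly-nonzero component $I_{r+1}(\Sigma_r)$ with the $(r-1)$-st prolongation $I_2(\XX)^{(r-1)}:=S^{r+1}V^*\cap(I_2(\XX)\otimes S^{r-1}V^*)$ --- by a single polarization argument, and then to address separately why this component is genuinely nonzero. Write $\wh\XX\subset V$ for the affine cone over $\XX$; it is stable under scaling, and since $V$ is an irreducible $G$-module and $\wh\XX$ is a nonzero $G$-stable subset, $\wh\XX$ spans $V$. The cone over $\Sigma_r$ is the Zariski closure of $\{y_1+\dots+y_r:y_i\in\wh\XX\}$, so $P\in S^dV^*$ lies in $I_d(\Sigma_r)$ iff $P(y_1+\dots+y_r)=0$ for all $y_i\in\wh\XX$. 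Let $\widetilde P$ denote the symmetric $d$-linear form with $\widetilde P(v,\dots,v)=P(v)$, write $y^{[m]}$ for the argument $y$ repeated $m$ times, and $t^\alpha=t_1^{\alpha_1}\cdots t_r^{\alpha_r}$. We use two elementary identities: the multinomial expansion $P\bigl(\sum_i t_iy_i\bigr)=\sum_{|\alpha|=d}\tfrac{d!}{\alpha_1!\cdots\alpha_r!}\,\widetilde P\bigl(y_1^{[\alpha_1]},\dots,y_r^{[\alpha_r]}\bigr)\,t^\alpha$, and $(\partial_{w_1}\cdots\partial_{w_k}P)(u)=\tfrac{d!}{(d-k)!}\,\widetilde P\bigl(w_1,\dots,w_k,u^{[d-k]}\bigr)$; the latter, together with the definition of the prolongation and multilinearity, shows that $P\in S^{k+2}V^*$ lies in $I_2(\XX)^{(k)}$ iff $\widetilde P(w_1,\dots,w_k,y,y)=0$ for all $w_i\in V$ and all $y\in\wh\XX$.

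For the vanishing, let $P\in I_d(\Sigma_r)$ with $d\leq r$ and pick any $y_1,\dots,y_d\in\wh\XX$. Since $\wh\XX$ is a cone, $\sum_{i=1}^d t_iy_i$ lies on $\Sigma_d\subseteq\Sigma_r$ for all scalars $t_i$, so $P(\sum t_iy_i)\equiv0$; its $t_1\cdots t_d$-coefficient equals $d!\,\widetilde P(y_1,\dots,y_d)$, so $\widetilde P$ vanishes on $\wh\XX^{\times d}$, hence on $V^{\times d}$ since $\wh\XX$ spans, hence $P=0$.

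For the equality in degree $r+1$: to see ``$\subseteq$'', take $P\in I_{r+1}(\Sigma_r)$ and $y_1,\dots,y_r\in\wh\XX$; the $t_1\cdots t_{r-1}t_r^{2}$-coefficient of the vanishing polynomial $P\bigl(\sum_{i=1}^r t_iy_i\bigr)$ is a positive multiple of $\widetilde P(y_1,\dots,y_{r-1},y_r,y_r)$, so this quantity vanishes for all $y_i\in\wh\XX$; by multilinearity in the first $r-1$ arguments and the spanning property, $\widetilde P(w_1,\dots,w_{r-1},y,y)=0$ for all $w_i\in V$, $y\in\wh\XX$, i.e.\ $P\in I_2(\XX)^{(r-1)}$. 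To see ``$\supseteq$'', let $P\in I_2(\XX)^{(r-1)}$; in the expansion
$$
P(y_1+\dots+y_r)=\sum_{|\alpha|=r+1}\frac{(r+1)!}{\alpha_1!\cdots\alpha_r!}\,\widetilde P\bigl(y_1^{[\alpha_1]},\dots,y_r^{[\alpha_r]}\bigr)
$$
each multi-index $\alpha\in\NN^r$ with $|\alpha|=r+1$ has some entry $\alpha_j\geq2$ (pigeonhole among $r$ slots), and then $\widetilde P\bigl(y_1^{[\alpha_1]},\dots,y_r^{[\alpha_r]}\bigr)=\widetilde P(w_1,\dots,w_{r-1},y_j,y_j)$, where $(w_1,\dots,w_{r-1})$ lists $y_i$ with multiplicity $\alpha_i$ for $i\neq j$ and $y_j$ with multiplicity $\alpha_j-2$; this vanishes by the prolongation characterization since $y_j\in\wh\XX$. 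Hence every term vanishes, $P$ vanishes on $\Sigma_r$, and $P\in I_{r+1}(\Sigma_r)$.

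The two preceding steps are soft and valid for any nondegenerate $\XX$; what remains --- and what I expect to be the real obstacle --- is to show that $I_2(\XX)^{(r-1)}\neq0$ whenever $\Sigma_r\subsetneq\PP$, so that degree $r+1$ is genuinely the first nonzero component rather than merely an upper bound for it. Without homogeneity this can fail: for a nondegenerate $\XX$ lying on a single irreducible quadric already the first prolongation vanishes, so the ideal of $\Sigma_2$ need not start in degree $3$. Here one invokes Kostant's description of $I_2(\XX)$ as the $G$-stable complement of the Cartan component in $S^2V^*$ and computes the iterated prolongations inside $S^\bullet V^*$ as $G$-modules, showing $I_2(\XX)^{(r-1)}\neq0$ for every $r$ below the generic rank $r_m$. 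For the application in the present paper only the vanishing $I_d(\Sigma_r)=0$ for $d\leq r$ is actually used, so this final representation-theoretic input may be quoted as a black box.
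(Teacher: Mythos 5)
This theorem is quoted in the paper as an external result of Landsberg and Manivel \cite{Lands-Mani-2004-IdealsSecVar}; the paper supplies no proof, so there is nothing internal to compare your argument against line by line. On its own terms, your polarization argument is correct and is essentially the standard proof of the two displayed identities: the vanishing $I_d(\Sigma_r)=0$ for $d\leq r$ via the coefficient of $t_1\cdots t_d$ together with the fact that $\widehat{\XX}$ spans $V$ ($V$ being irreducible), and both inclusions of $I_{r+1}(\Sigma_r)=S^{r+1}V^*\cap(I_2(\XX)\otimes S^{r-1}V^*)$ via the coefficient of $t_1\cdots t_{r-1}t_r^2$ and the pigeonhole step, using the characterization of the prolongation by partial derivatives. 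A small bonus of your route is that it only uses $I_2(\XX)$ as the degree-two piece of $I(\XX)$, not that the ideal is generated in degree two. The one clause of the statement you do not prove --- that $I_{r+1}(\Sigma_r)$ is actually nonzero when $\Sigma_r\subsetneq\PP$, i.e.\ that degree $r+1$ is the \emph{first nonzero} component rather than a lower bound for it --- you isolate correctly as a separate, representation-theoretic fact about prolongations of the Kostant quadrics, and you are right that it cannot follow from the soft argument (your single-quadric example shows why). Quoting it as a black box is consistent with how the paper itself treats the whole theorem, and your observation that the paper's application (Proposition \ref{Prop Sec and d-Inv}, giving $r_{ss}\leq d_1$) only needs $I_d(\Sigma_r)=0$ for $d\leq r$ is accurate; just be explicit that the nonvanishing claim is taken from \cite{Lands-Mani-2004-IdealsSecVar} rather than established here.
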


Consider the ring $\CC[V]^G$ of invariant polynomials on the representation $V$. Let $J\subset\CC[V]^G$ be the ideal in the invariant ring vanishing at $0$. Let $\PP_{us}\subset\PP$ be the zero-locus of $J$, also known as the unstable locus, or the nullcone. The complement $\PP_{ss}=\PP\setminus\PP_{us}$ is called the semistable locus. We are interested in the relations between secant varieties and their ideals on one side, and the nullcone and the ring of invariants, on the other. Recall that the incidence with the nullcone for a projective variety can be tested via the momentum map: $\Sigma_r\subset\PP_{us}$ if and only if $0\notin \mu(\Sigma_r)$.

\begin{defin} If $\CC[V_\lambda]^G\ne\CC$, the rank of semistability of $V_\lambda$ is defined as
$$
r_{ss} = \min\{r\in\NN:0\in\mu(\Sigma_r)\} \;.
$$
\end{defin}

The closed $G$-orbit $\XX\subset\PP$ belongs to $\PP_{us}$ as long as the representation is nontrivial; thus $r_{ss}\geq2$. The following proposition is an interpretation of a result of Zak, \cite{Zak-Book}, Ch. III.

\begin{prop}

If $V_\lambda\ncong V_\lambda^*$, then $\Sigma_2\in\PP_{us}$. If $\CC[V_\lambda]^G\ne\CC$, then $r_{ss}=2$ if and only if $V_\lambda\cong V_\lambda^*$.
\end{prop}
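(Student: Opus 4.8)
The plan is to reduce everything to the single question of whether $0\in\mu(\Sigma_2)$. By the criterion recalled above, $\Sigma_2\subset\PP_{us}$ is equivalent to $0\notin\mu(\Sigma_2)$; and since $\Sigma_2\subset\Sigma_r$ for $r\geq 2$ and $r_{ss}\geq 2$ always, the condition $0\in\mu(\Sigma_2)$ is exactly $r_{ss}=2$ (which in turn already forces $\CC[V_\lambda]^G\neq\CC$, as $0\in\mu(\Sigma_2)\subset\mu(\PP)$). So the whole statement follows once one proves the two implications: (I) if $V_\lambda\ncong V_\lambda^*$ then $0\notin\mu(\Sigma_2)$; and (II) if $V_\lambda\cong V_\lambda^*$ then $0\in\mu(\Sigma_2)$. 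Throughout I will use that $\XX=K[v^\lambda]$, so that after applying a suitable element of $K$ a point of $\Sigma_2$ may be written $[v^\lambda+cu]$ with $[u]\in\XX$ and $c\in\CC$.

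For (II) I would exhibit one explicit point. Self-duality of $V_\lambda$ means $-\lambda=w_0\lambda$ is the lowest weight, so a norm-one lowest weight vector $v^{-\lambda}$ exists and lies in the cone over $\XX$. The vector $v=v^\lambda+v^{-\lambda}$ has rank at most $2$, hence $[v]\in\Sigma_2$, and I claim $\mu[v]=0$: its $\mk t^*$-component is $\tfrac12(\lambda+(-\lambda))=0$, while for $\xi=e_\alpha-e_{-\alpha}$ and $\xi=i(e_\alpha+e_{-\alpha})$ one has $e_\alpha v^\lambda=0=e_{-\alpha}v^{-\lambda}$, so $\xi\cdot v$ lies in $V^{\lambda-\alpha}\oplus V^{-\lambda+\alpha}$, which is orthogonal to $V^\lambda\oplus V^{-\lambda}$ as soon as $\alpha\neq 2\lambda$; hence $\langle\xi v,v\rangle=0$. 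The residual possibility $2\lambda\in\Delta$ does not occur under our hypothesis: from $s_{2\lambda}\lambda=-\lambda$ one reads off that $\lambda$ is then minuscule with $2\lambda$ a root, and inspecting the list of minuscule weights, the only such cases are the pullback to $K$ of the defining representation of an $SL_2$ or $Sp_{2n}$ simple factor, for which $\CC[V_\lambda]^G=\CC$. (Alternatively, in the orthogonal case one may simply note that the quadratic invariant $\beta(v,v)$ attached to the invariant symmetric form $\beta$ is nonzero at $[v^\lambda+v^{-\lambda}]$ by nondegeneracy.) Either way $0=\mu[v]\in\mu(\Sigma_2)$, so $r_{ss}=2$.

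Implication (I) is the substantial half, and here I would invoke the cited result of Zak, reinterpreted as follows. Suppose $0\in\mu(\Sigma_2)$; lifting a point of $\mu^{-1}(0)\cap\Sigma_2$ gives a nonzero $w$ in the affine cone over $\Sigma_2$ with $\mu[w]=0$, so $w$ has closed $G$-orbit and border rank at most $2$. In the model situation where $w=v^\lambda+cu$ with $[u]\in\XX$ a lowest weight line, the off-diagonal parts of $\mu[w]$ vanish and the vanishing of the remaining part forces $|c|=1$ together with $\mathrm{Ad}^*(k)\lambda=-\lambda$ for the $k\in K$ with $u=kv^\lambda$; this is possible only if $-\lambda\in W\lambda$, i.e. $V_\lambda\cong V_\lambda^*$. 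The hard part is to show that no other configuration can produce $\mu[w]=0$: one must control the cross terms $\bar c\langle\xi v^\lambda,u\rangle+c\langle\xi u,v^\lambda\rangle$ for an arbitrary $[u]\in\XX$, and also deal with the boundary ("tangential") points of $\Sigma_2$, where $w$ is a limit of honest secants with the two base points colliding. Ruling all of these out uniformly is exactly the geometric content of Zak's study of secant and tangent varieties (via the Gauss map and second fundamental forms), so at this step I would cite \cite{Zak-Book}, Ch. III, rather than reprove it.

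Putting the pieces together: the first assertion of the Proposition is (I); and if $\CC[V_\lambda]^G\neq\CC$, then $r_{ss}$ is defined with $r_{ss}\geq 2$, so $r_{ss}=2\Leftrightarrow 0\in\mu(\Sigma_2)$, which by (I) and (II) holds if and only if $V_\lambda\cong V_\lambda^*$.
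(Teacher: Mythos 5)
Your proposal is essentially in the same position as the paper: the paper gives no argument for this proposition, stating it as an interpretation of Zak (\cite{Zak-Book}, Ch.~III), so your appeal to the same citation for the implication $V_\lambda\ncong V_\lambda^*\Rightarrow\Sigma_2\subset\PP_{us}$ is not a gap relative to the paper's own treatment. What you add is a correct explicit proof of the other implication: your point $[v^\lambda+v^{-\lambda}]\in\Sigma_2$ with $\mu[v^\lambda+v^{-\lambda}]=0$ is exactly Wildberger's Lemma \ref{Lemma Wildberger} applied to the root-distinct pair $\{\lambda,-\lambda\}$ (the same device the paper uses in Example \ref{Ex Upsilon for Sp4}), and your treatment of the exceptional case $2\lambda\in\Delta$ is correct in substance --- the only dominant $\lambda$ with $2\lambda$ a root are the natural representations of $SL_2$ and $Sp_{2n}$, where $\CC[V_\lambda]^G=\CC$ --- though the ``minuscule'' shortcut is loose and a direct check of which dominant roots have half lying in the weight lattice is cleaner. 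One useful remark: the half you delegate to Zak does not actually require controlling cross terms, tangential points, or second fundamental forms. Since $\PP_{us}$ is Zariski closed and $G$-stable, and since the secant lines through pairs in the open $G$-orbit of $\XX\times\XX$ (the orbit of $([v^\lambda],[v^{w_0\lambda}])$, by Bruhat) are dense in $\Sigma_2$ while the torus moves $[v^\lambda+v^{w_0\lambda}]$ densely along its secant line, one gets $\Sigma_2=\ol{G[v^\lambda+v^{w_0\lambda}]}$, so it suffices to show that $[v^\lambda+v^{w_0\lambda}]$ is unstable. When $w_0\lambda\ne-\lambda$, equality of lengths gives $0\notin\mc C\{\lambda,w_0\lambda\}$, hence there is a rational $\eta\in\mk t$ with $(\lambda|\eta)<0$ and $(w_0\lambda|\eta)<0$, and the corresponding one-parameter subgroup drives $v^\lambda+v^{w_0\lambda}$ to $0$. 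This elementary Hilbert--Mumford argument would make the whole proposition self-contained, with Zak needed only as historical attribution.
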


We know that $0\notin\mu(\Sigma_1)=\mu(\XX)=K\lambda$, so $r_{ss}\geq 2$. The above results have the following direct consequence.

\begin{prop}\label{Prop Sec and d-Inv}
Let $d_1$ be the minimal positive degree of a polynomial in $\CC[V]^G$. Let $d_1'$ be the minimal positive degree of an invariant polynomial nonvanishing on the first semistable secant variety $\Sigma_{r_{ss}}$. Then
$$
r_{ss}\leq d_1\leq d_1' \;.
$$
\end{prop}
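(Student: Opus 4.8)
The plan is to establish the two inequalities separately. The right-hand one, $d_1\le d_1'$, is formal. Since $\CC[V]^G\ne\CC$ the locus $\PP_{us}=V(J)$ is a proper subvariety of $\PP=\Sigma_{r_m}$, so $r_{ss}$ is finite; by its defining property $0\in\mu(\Sigma_{r_{ss}})$, which by the incidence criterion $\Sigma_r\subset\PP_{us}\iff 0\notin\mu(\Sigma_r)$ means $\Sigma_{r_{ss}}$ meets $\PP_{ss}$. Hence some homogeneous invariant of positive degree does not vanish identically on $\Sigma_{r_{ss}}$, so $d_1'$ is well defined, and since every such invariant is a nonconstant element of $\CC[V]^G$ its degree is at least $d_1$; taking the minimum, $d_1\le d_1'$.

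The left-hand inequality $r_{ss}\le d_1$ carries the content, and I would prove it by contradiction using the Landsberg--Manivel theorem quoted above. Suppose $\Sigma_{d_1}\subset\PP_{us}$. Since $\PP_{us}$ is the zero locus of the homogeneous ideal $J\subset\CC[V]^G$ of invariants vanishing at $0$, and $I(\Sigma_{d_1})$ is the full (radical) vanishing ideal of the projective variety $\Sigma_{d_1}$, the inclusion $\Sigma_{d_1}\subset V(J)$ gives $J\subset I(\Sigma_{d_1})$; passing to degree-$d_1$ parts, $J_{d_1}\subset I_{d_1}(\Sigma_{d_1})$. But by Landsberg--Manivel the first nonzero homogeneous component of $I(\Sigma_{d_1})$ lies in degree $d_1+1$, so $I_{d_1}(\Sigma_{d_1})=0$, forcing $J_{d_1}=0$ in contradiction with $J_{d_1}=\CC[V]^G_{d_1}\ne0$. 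Hence $\Sigma_{d_1}\not\subset\PP_{us}$, and by the incidence criterion this says $0\in\mu(\Sigma_{d_1})$, i.e. $r_{ss}\le d_1$. Combined with $r_{ss}\ge2$ (the closed orbit $\XX$ lies in $\PP_{us}$ for a nontrivial representation) this also re-proves $d_1\ge2$.

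I do not foresee a genuine obstacle: the whole weight of the argument sits inside the cited vanishing $I_d(\Sigma_r)=0$ for $d\le r$, and the remainder is the standard dictionary between the nullcone, the ideal $J$, and the momentum image. The only small points to check are that $I(\Sigma_{d_1})$ is indeed the vanishing ideal (so that $\Sigma_{d_1}\subset V(J)$ does yield $J\subset I(\Sigma_{d_1})$) and the well-definedness of $d_1'$, both routine. If one wanted a self-contained proof, the single piece of real work would be re-deriving $I_r(\Sigma_r)=0$: this can be done by induction on $r$, polarizing a degree-$r$ form $f$ vanishing on $\Sigma_r$, restricting one argument to the affine cone over $\XX$ to obtain a degree-$(r-1)$ form vanishing on $\Sigma_{r-1}$, and concluding with the induction hypothesis and the nondegeneracy of $\XX$ (its cone spans $V$).
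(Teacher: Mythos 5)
Your proposal is correct and follows the same route the paper intends: the paper gives no separate proof, calling the proposition a direct consequence of the Landsberg--Manivel vanishing $I_{d}(\Sigma_{d})=0$ for $d\le r$ together with the criterion $\Sigma_r\subset\PP_{us}\Leftrightarrow 0\notin\mu(\Sigma_r)$, which is exactly your contradiction argument (a degree-$d_1$ invariant cannot lie in $I(\Sigma_{d_1})$, so $\Sigma_{d_1}$ meets the semistable locus), plus the formal observation $d_1\le d_1'$. Your added remarks on the well-definedness of $d_1'$ and the optional re-derivation of $I_r(\Sigma_r)=0$ are fine but not needed.
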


There is a class of homogeneous projective varieties $\XX\subset\PP(V)$, called rank-semi-continuous, or briefly rs-continuous varieties, for which the above relation between $r_{ss}$ and $d_1$ proves to be stronger. They characterized by the requirement that rank and border rank should coincide, i.e. $\XX$ is called {\it rs-continuous} if ${\rm rk}_\XX=\ul{\rm rk}_\XX$. The rs-continuous varieties have been classified by one of the authors and and A. V. Petukhov, \cite{Petukh-Tsan}. The list is given in the table below, along with the linear automorphism groups of the varieties. We also give the degree $d_1$, which we have obtained from Kac's table \cite{Kac-nilp-orb}. (All rs-continuous varieties turn out to have polynomial invariant rings.) We put $d_1=0$ if $\CC[V]^G=\CC$.\\

\begin{center}
{\bf rs-continuous varieties}\\
\vspace{0.5cm}
\begin{tabular}{|c|c|c|c|c|}
\hline
Notation for $\XX$ & Ambient~$\PP(V)$ & Group $G$& Max rk$_\XX$ & $d_1$ \\
\hline
$\PP(\CC^n)$ &$\PP(\CC^n)$& $SL_n$ & $1$ & 0\\
\hline
${\rm Ver}_2(\PP(\CC^n))$&$\PP({\rm S}^2\CC^n)$ & $SL_n$ & $n$ & $n$ \\
\hline
Gr$_2(\CC^n)$&$\PP(\Lambda^2\CC^n)$ & $SL_n$ & $\lfloor\frac{n}{2}\rfloor$ & 0 for odd $n$; \\ &&&& $n/2$ for even $n$\\
\hline
Fl$(1,n-1; \CC^n)$&$\PP(\mk{sl}_n)$ & $SL_n$ & $n$ & 2 \\
\hline
Q$^{n-2}$&$\PP(\CC^n)$ & $SO_n$ & $2$ & 2 \\
\hline
S$^{10}$&$\PP(\CC^{16})$ & $Spin_{10}$ & $2$ & 0 \\
\hline
Gr$_{\omega}(2,\CC^{2n})$&$\PP(\Lambda_0^2\CC^{2n})$ & $Sp_{2n}$ & n & 2\\
\hline
E$^{16}$&$\PP(\CC^{27})$ & $E_6$ & $3$ & 3 \\
\hline
F$^{15}$&$\PP(\CC^{26})$ & $F_4$ & $3$ & 2 \\
\hline
${\rm Segre}(\PP(\CC^m)\times\PP(\CC^n))$&$\PP(\CC^m\otimes\CC^n)$ & $SL_m\times SL_n$ & $\min\{m,n\}$ & 0 for $m\ne n$; \\ &&&& $m$ for $m=n$ \\
\hline
\end{tabular}
\end{center}

The nullcones are also known in all cases, see e.g. in Zak, \cite{Zak-Book}, Ch. 3. We obtain the following.

\begin{prop}
Suppose that $\XX\subset\PP$ is rs-continuous and $G$ is the linear automorphism group of $\XX$. Then
$$
r_{ss}=d_1=d_1' \;.
$$
\end{prop}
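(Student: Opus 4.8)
The plan is to close the chain $r_{ss}\le d_1\le d_1'$ supplied by Proposition~\ref{Prop Sec and d-Inv} by proving the two reverse inequalities $d_1\le r_{ss}$ and $d_1'\le d_1$. First note that if $\CC[V]^G=\CC$ there is nothing to prove: then $d_1=0$ and $r_{ss},d_1'$ are not defined (or are $0$ by convention). So assume $\CC[V]^G\ne\CC$; since $V$ is a nontrivial irreducible module over a semisimple group it has neither an invariant vector nor an invariant linear form, whence $d_1\ge2$.

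I would next observe that $d_1'\le d_1$ is an almost formal consequence of $d_1\le r_{ss}$ together with the theorem of Landsberg and Manivel \cite{Lands-Mani-2004-IdealsSecVar}. Indeed, once $r_{ss}=d_1$ is known, the variety $\Sigma_{r_{ss}}=\Sigma_{d_1}$ has $d_1\ge2$, so by \cite{Lands-Mani-2004-IdealsSecVar} the ideal $I(\Sigma_{d_1})$ has no nonzero homogeneous component in any degree $\le d_1$, i.e. $I_{d_1}(\Sigma_{d_1})=0$. Hence a nonzero invariant $f$ of degree $d_1$ (one exists by the definition of $d_1$) cannot lie in $I_{d_1}(\Sigma_{r_{ss}})$, so $f$ does not vanish on $\Sigma_{r_{ss}}$. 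This gives $d_1'\le d_1$, and with $d_1\le d_1'$ we conclude $d_1'=d_1$.

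It therefore remains to prove $d_1\le r_{ss}$, equivalently $\Sigma_{d_1-1}\subset\PP_{us}$, equivalently $0\notin\mu(\Sigma_{d_1-1})$. This is the step where the hypotheses are really used: rs-continuity of $\XX$ together with maximality of $G$ forces $\XX$ to be one of the entries of the classification \cite{Petukh-Tsan}, identifies $\CC[V]^G$ with the corresponding ring in Kac's tables \cite{Kac-nilp-orb}, and identifies $\PP_{us}$ with the nullcone described in \cite{Zak-Book}, Ch.~III. I would then verify $\Sigma_{d_1-1}\subset\PP_{us}$ by inspecting the table, grouped by the value of $d_1$. When $d_1=2$ the inclusion $\Sigma_{d_1-1}=\Sigma_1=\XX\subset\PP_{us}$ is automatic for every nontrivial representation, so there is nothing to check (this covers the adjoint variety, the quadrics $Q^{n-2}$, the symplectic Grassmannians, $F^{15}$, and the low-dimensional Veronese/Grassmannian/Segre that happen to be quadrics). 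When $d_1\ge3$ — which from the table occurs only for $\mathrm{Ver}_2(\PP(\CC^n))$, $\mathrm{Gr}_2(\CC^{2m})$, $E^{16}$ and $\mathrm{Segre}(\PP(\CC^m)\times\PP(\CC^m))$ with the parameters large enough — the ring $\CC[V]^G$ is generated by a single ``determinant''-type invariant of degree $d_1$ equal to the maximal rank (the determinant of a symmetric matrix; the Pfaffian; the $E_6$ cubic norm; the determinant of a square matrix), whose zero locus $\PP_{us}$ is exactly the set of elements of rank $<d_1$; since $\XX$ is rs-continuous, rank coincides with border rank, so that set equals $\Sigma_{d_1-1}$. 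In all cases $\Sigma_{d_1-1}\subset\PP_{us}$, hence $r_{ss}\ge d_1$, and combined with the previous paragraph $r_{ss}=d_1=d_1'$.

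The main obstacle is precisely this last case analysis in the range $d_1\ge3$: it rests on three separate classification inputs (rs-continuity from \cite{Petukh-Tsan}, the invariant rings from \cite{Kac-nilp-orb}, the nullcones from \cite{Zak-Book}) and on the classical but case-specific fact that for these Scorza-type, Jordan-algebra rank varieties the vanishing of the fundamental invariant is equivalent to a drop of rank, i.e. the degree of that invariant equals the generic (maximal) rank. Once one trusts the tables the argument is short; the real content lies in this structural coincidence being tabulated there.
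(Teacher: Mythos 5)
Your proposal is correct and follows essentially the same route as the paper: the chain $r_{ss}\leq d_1\leq d_1'$ from Proposition \ref{Prop Sec and d-Inv} (via Landsberg--Manivel) is closed by a case-by-case inspection of the classified rs-continuous varieties, matching the invariant rings from Kac's tables with the known nullcones, which for the $d_1\geq 3$ entries coincide with $\Sigma_{d_1-1}$ precisely because rank equals border rank. The paper leaves this verification implicit ("the nullcones are also known in all cases"), so your write-up merely makes explicit the same argument, including the observation that $I_{d_1}(\Sigma_{d_1})=0$ forces $d_1'\leq d_1$.
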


\begin{ex}
Consider $G=SL_n$ acting on $V=S^k\CC^n$ with $k,n\geq2$. The associated homogeneous projective variety is the Veronese variety $\XX={\rm Ver}_k(\PP^{n-1})\subset\PP(V)$. Then $\mu(\PP(V))\cap{\mk t}^*$ is an $n$-simplex centered at $0$. It is not hard to see that, for $1\leq r\leq n$, $G$ has an open orbit in $\Sigma_r$ and the momentum image $\mu(\Sigma_r)\cap{\mk t}^*$ is the $r$-skeleton of the simplex. Thus $\Sigma_r\subset\PP_{us}$ for $r<n$ and $r_{ss}=n$, so that $d\geq n$. The variety $\XX$ is rs-continuous if an only if $k=2$; then the determinant of symmetric matrices is an invariant of degree $n$. The case $n=3$ shows that, for large $k$, invariants in degree $n$ may or may not occur.
\end{ex}

\section{Appendix: reducible unstable representations}

Here we present a proof of Proposition \ref{Prop UnstableMultFree}, based on Wildberger's method. Recall that the proposition states that the only reducible multiplicity-free representations $V$ of a semisimple compact group $K$ for which $0 \notin \mu(\PP(V))$ are the $SU_{2n+1}$-representation on $(\CC^{2n+1})^* \oplus \Lambda^2\CC^{2n+1}$ and its dual.

\begin{proof}
Up to duality the only candidates for multiplicity-free reducible representations $V$ with $0 \notin \mu(\PP(V))$ are the following:
\begin{itemize}
\item the $SU_{2n+1}$-representations $\CC^{2n+1} \oplus \Lambda^2\CC^{2n+1}$ and $(\CC^{2n+1})^* \oplus \Lambda^2\CC^{2n+1}$;
\item the $SU_{2} \times SU_5$-representations $(\CC^{2}\otimes\CC^5) \oplus (\CC^2\otimes\Lambda^2\CC^{5})$ and $(\CC^{2}\otimes (\CC^5)^*) \oplus (\CC^2\otimes\Lambda^2\CC^{5})$;
\item the $SU_{2} \times SU_7$-representations $(\CC^{2}\otimes\CC^7) \oplus (\CC^2\otimes\Lambda^2\CC^{7})$ and $(\CC^{2}\otimes (\CC^7)^*) \oplus (\CC^2\otimes\Lambda^2\CC^{7})$.
\end{itemize}

We check explicitly in each case if $0$ could be written as a convex combination of root-distinct weights. 

Case $V = \CC^{2n+1} \oplus \Lambda^2\CC^{2n+1}$. We denote the weights of $\CC^{2n+1}$ as a $GL_{2n+1}$-representation by $\varepsilon_j$, $1 \leq j \leq 2n+1$. Then the weights of $\Lambda^2\CC^{2n+1}$ are of the form $\varepsilon_i + \varepsilon_j$ for $1\leq i < j \leq 2n+1$. Notice that $(\varepsilon_i + \varepsilon_j) -(\varepsilon_k + \varepsilon_l)$ is a root if and only if $\#(\{i,j\} \cap \{k,l\}) = 1$. Moreover, $(\varepsilon_i + \varepsilon_j) - \varepsilon_k$ is never a root. Therefore, we could express $0$ in the following way:
$$
0 = \frac{1}{n+1} (\varepsilon_1 + \varepsilon_2 + \dots + \varepsilon_{2n+1}) = \frac{1}{n+1}(\varepsilon_{1} + \varepsilon_{2}) + \frac{1}{n+1}(\varepsilon_{3} + \varepsilon_{4}) +\dots + \frac{1}{n+1}(\varepsilon_{2n-1} + \varepsilon_{2n}) + \frac{1}{n+1}\varepsilon_{2n+1}.
$$
Thus $0 \in \mu(\PP(V))$.\\

Case $V = (\CC^{2}\otimes\CC^5) \oplus (\CC^2\otimes\Lambda^2\CC^{5})$. We denote the weights of $\CC^{2}$ as a $GL_2$-representation by $\delta_1$ and $\delta_2$ and the weights of $\CC^{5}$ as a $GL_5$-representation by $\varepsilon_j$ for $1 \leq j \leq 5$. Then
$$
\Lambda(V) = \{\delta_i \oplus \varepsilon_j : i=1,2; 1\leq j \leq 5 \} \cup \{\delta_i \oplus (\varepsilon_j + \varepsilon_k) : i=1,2; 1\leq j<k \leq 5 \}.
$$
Moreover,
\begin{align*}
0 = &\frac{1}{3}((\delta_1 + \delta_2) \oplus(\varepsilon_1 + \dots + \varepsilon_5)) = \\
&\frac{1}{6}(\delta_1 \oplus(\varepsilon_1 + \varepsilon_2) + \delta_1 \oplus (\varepsilon_3 + \varepsilon_4) + \delta_1 \oplus \varepsilon_5 + \delta_2 \oplus(\varepsilon_1 + \varepsilon_3) + \delta_2 \oplus (\varepsilon_2 + \varepsilon_5) + \delta_2 \oplus \varepsilon_4.
\end{align*}
Thus $0 \in \mu(\PP(V))$.\\

Case $V = (\CC^{2}\otimes (\CC^5)^*) \oplus (\CC^2\otimes\Lambda^2\CC^{5})$.  Then
$$
\Lambda(V) = \{\delta_i \oplus (\varepsilon_1 + \dots + \hat{\varepsilon_{j}} + \dots +\varepsilon_5) : i = 1,2; 1 \leq j \leq 5\} \cup\{\delta_i \oplus (\varepsilon_j + \varepsilon_k) : i=1,2; 1\leq j<k \leq 5 \}.
$$
Moreover,
\begin{align*}
0 = & \frac{1}{2} (\delta_1 + \delta_2) \oplus (\varepsilon_1 + \dots + \varepsilon_5) = \\
&\frac{1}{4}(\delta_1\oplus(\varepsilon_1 + \varepsilon_2 + \varepsilon_3 + \varepsilon_4) + \delta_1\oplus(\varepsilon_4 + \varepsilon_5) + \delta_2\oplus(\varepsilon_1 + \varepsilon_2) + \delta_2\oplus(\varepsilon_3 + \varepsilon_5)).
\end{align*}
Thus again $0 \in \mu(\PP(V))$.\\

Case $V = (\CC^{2}\otimes\CC^7) \oplus (\CC^2\otimes\Lambda^2\CC^{7})$. Then
$$
\Lambda(V) = \{\delta_i \oplus \varepsilon_j : i=1,2; 1\leq j \leq 7 \} \cup \{\delta_i \oplus (\varepsilon_j + \varepsilon_k) : i=1,2; 1\leq j<k \leq 7 \}.
$$
Moreover,
\begin{align*}
0 = &(\frac{1}{2}(\delta_1 + \delta_2)) \oplus(\frac{1}{4}(\varepsilon_1 + \dots + \varepsilon_5)) = \\
&\frac{1}{4}(\delta_1 \oplus(\varepsilon_1 + \varepsilon_2) + \delta_2 \oplus (\varepsilon_3 + \varepsilon_4) + \delta_1 \oplus (\varepsilon_5 + \varepsilon_6) + \delta_2 \oplus \varepsilon_7).
\end{align*}
Thus $0 \in \mu(\PP(V))$.\\

Case $V = (\CC^{2}\otimes (\CC^7)^*) \oplus (\CC^2\otimes\Lambda^2\CC^{7})$.  Then
$$
\Lambda(V) = \{\delta_i \oplus (\varepsilon_1 + \dots + \hat{\varepsilon_{j}} + \dots +\varepsilon_7) : i = 1,2; 1 \leq j \leq 7\} \cup\{\delta_i \oplus (\varepsilon_j + \varepsilon_k) : i=1,2; 1\leq j<k \leq 7 \}.
$$
Moreover,
\begin{align*}
0 = & \frac{1}{2} (\delta_1 + \delta_2) \oplus (\varepsilon_1 + \dots + \varepsilon_7) = \frac{1}{4}(\delta_2\oplus(\varepsilon_1 + \varepsilon_2 + \dots + \varepsilon_6) + \delta_1\oplus(\varepsilon_1 + \varepsilon_7)) + \\
&\frac{1}{8}(\delta_1\oplus(\varepsilon_2 + \varepsilon_3 + \dots + \varepsilon_7) + \delta_1\oplus(\varepsilon_2 + \varepsilon_3) + \delta_2\oplus(\varepsilon_4 + \varepsilon_5) + \delta_2\oplus(\varepsilon_6 + \varepsilon_7)).
\end{align*}
Thus again $0 \in \mu(\PP(V))$.\\

It remains to show that, when $V = (\CC^{2n+1})^* \oplus \Lambda^2\CC^{2n+1}$, then $0 \notin \mu(\PP(V)$. This representation is such that $G=K^\CC=SL_{2n+1}$ acts spherically on the projective space $\PP=\PP(V)$. The description of spherical representations by Knop, \cite{Knop-MultFreeSpa}, includes the information $\CC[V]^G=0$, which is equivalent to $0\notin\mu(\PP)$. 

\end{proof}

\bibliographystyle{plain}

\small{

}

\vspace{0.4cm}

\author{\noindent Elitza Hristova \\ Fakult\"at f\"ur Mathematik, Ruhr-Universit\"at Bochum,\\ Universit\"atstra{\ss}e 150, 44780 Bochum, Germany. \\ elitza.hristova@rub.de}\\

\author{\noindent Tomasz Maci\c{a}\.{z}ek \\ Center for Theoretical Physics, Polish Academy of
Sciences \\ Al. Lotnik\'ow 32/46, 02-668 Warszawa, Poland. \\ maciazek@cft.edu.pl}\\

\author{\noindent Valdemar V. Tsanov \\ Mathematisches Institut, Universit\"at G\"ottingen,\\ Bunsenstra{\ss}e 3-5, 37073 G\"ottingen, Germany. \\ valdemar.tsanov@mathematik.uni-goettingen.de}\\

\end{document}